\documentclass[11pt]{article}
\usepackage{enumerate,latexsym}
\usepackage{latexsym}
\usepackage{amsmath,amssymb}
\usepackage{times}

\usepackage[ansinew]{inputenc}
\usepackage{graphicx}
\usepackage{color}

\usepackage{amsthm}


\def\rth{\mathbb{R}^3}
\newcommand{\R}{\mathbb{R}}
\def\g{\gamma}

\def\B{\mathbb{B}}
\def\N{\mathbb{N}}

\newcommand{\nc}{\newcommand}
\newcommand{\ben}{\begin{enumerate}}
\newcommand{\bit}{\begin{itemize}}
\newcommand{\een}{\end{enumerate}}
\newcommand{\eit}{\end{itemize}}
\newcommand{\wh}{\widehat}
\newcommand{\Int}{\mathrm{Int}}

\newcommand{\cD}{{\mathcal D}}
\newcommand{\cC}{{\mathcal C}}

\newcommand{\cR}{{\mathcal R}}

\newcommand{\cH}{{\mathcal H}}
\newcommand{\cF}{{\mathcal F}}

\newcommand{\wt}{\widetilde}

\newcommand{\cN}{\mathcal N}
\newcommand{\ov}{\overline}

\def\a{{\alpha}}

\def\t{{\theta}}

\def\g{{\gamma}}
\def\G{{\Gamma}}

\def\de{{\delta}}
\def\be{{\beta}}
\def\ve{{\varepsilon}}
\def\S{\Sigma}

\def\centerbmp#1#2#3{\vskip#2\relax\centerline{\hbox to#1{\special
    {bmp:#3 x=#1, y=#2}\hfil}}}

\newtheorem{theorem}{Theorem}[section]
\newtheorem{lemma}[theorem]{Lemma}
\newtheorem{proposition}[theorem]{Proposition}

\newtheorem{remark}[theorem]{Remark}
\newtheorem{ddescription}[theorem]{Description}
\newtheorem{property}[theorem]{Property}
\newtheorem{corollary}[theorem]{Corollary}
\newtheorem{definition}[theorem]{Definition}
\newtheorem{conjecture}[theorem]{Conjecture}

\newtheorem{assertion}[theorem]{Assertion}

\newtheorem{claim}[theorem]{Claim}

\newcommand{\ed}{\end{document}}
\nc{\bl}{\begin{lemma} }

\nc{\el}{\end{lemma} }

\nc{\bt}{\begin{theorem} }

\nc{\et}{\end{theorem} }
\newcommand{\rc}{ \renewcommand }

\rc{\v}{    \overset{\longrightarrow} }



\definecolor{b}{rgb}{.1,.1,.7}

\date{}

\begin{document}

\begin{title}
{Curvature estimates for constant mean curvature surfaces}
\end{title}

\begin{author}
{William H. Meeks III\thanks{This material is based upon
   work for the NSF under Award No. DMS-1309236.
   Any opinions, findings, and conclusions or recommendations
   expressed in this publication are those of the authors and do not 
   necessarily reflect the views of the NSF.}
   \and Giuseppe Tinaglia\thanks{The second author was partially
   supported by
EPSRC grant no. EP/M024512/1}}
\end{author}
\maketitle



%
%
%

\begin{abstract}
We derive extrinsic curvature estimates for compact disks embedded
in $\rth$ with nonzero constant mean curvature.

\vspace{.3cm}

\noindent{\it Mathematics Subject Classification:} Primary 53A10,
   Secondary 49Q05, 53C42

\noindent{\it Key words and phrases:} Minimal surface,
constant mean curvature, curvature estimates. 
\end{abstract}
\maketitle


\section{Introduction.}

For clarity of  exposition, we will call an oriented surface
$M$ immersed in $\rth$ an {\it $H$-surface} if it
is {\it embedded}, {\em connected}  and it
has {\it positive constant mean curvature $H$}. We will  call an
$H$-surface an {\em $H$-disk} if the $H$-surface is homeomorphic
to a closed  disk in the Euclidean plane.

The main result in this paper is the following curvature estimate for compact disks embedded
in $\rth$ with nonzero constant mean curvature.

\begin{theorem}[Extrinsic Curvature Estimates] \label{ext:cest}
Given $\delta,\cH>0$, there exists a constant $K_0(\delta,\cH)$ such that
for any $H$-disk $\cD$ with $H\geq \cH$,
$${\large{\LARGE \sup}_{\large \{p\in \cD \, \mid \, d_{\rth}(p,\partial
\cD)\geq \delta\}} |A_\cD|\leq  K_0(\delta,\cH)}.$$
\end{theorem}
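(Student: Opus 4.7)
My plan is to argue by contradiction via a blow-up analysis. Assume no such constant exists; then there are sequences $H_n \geq \cH$, $H_n$-disks $\cD_n$, and points $p_n \in \cD_n$ with $d_{\rth}(p_n, \partial \cD_n) \geq \delta$ but $\lambda_n := |A_{\cD_n}|(p_n) \to \infty$. After translating $p_n$ to the origin and applying a standard point-picking lemma (so that $p_n$ is an almost-maximum of $|A|$ on an appropriate scale), I rescale the disks by $\lambda_n$. The rescaled $H_n$-disks $\lambda_n \cD_n$ satisfy $|A|(0) = 1$, have mean curvature $\wt H_n = H_n/\lambda_n$, and their boundaries lie at extrinsic distance $\geq \delta \lambda_n \to \infty$ from $0$.

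\textbf{Extracting a limit and deriving a contradiction.} I would then pass to a smooth subsequential limit $\Sigma_\infty \subset \rth$ on compact subsets. Two regimes arise depending on the behavior of $\wt H_n$. If $\wt H_n \to 0$, the leaf of $\Sigma_\infty$ through $0$ is a complete embedded minimal surface arising as a limit of embedded disks whose boundaries escape to infinity; by the structure theory of minimal laminations of $\rth$ (Meeks--Rosenberg) and by one-sided curvature/half-space arguments, this leaf must be a plane, contradicting $|A|(0)=1$. If instead $\wt H_n$ stays bounded away from $0$, then after a further subsequence $\wt H_n \to H_\infty > 0$ and the leaf through $0$ is a complete, embedded, simply connected surface of positive constant mean curvature $H_\infty$; by the classification of such surfaces (Alexandrov/Hopf-type) it is a round sphere of radius $1/H_\infty$, which contradicts the fact that the rescaled disks $\lambda_n \cD_n$ have boundaries at distance $\delta \lambda_n \to \infty$, so their leaves cannot close up into a compact sphere in the limit.

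\textbf{Main obstacle.} The essential difficulty lies in justifying the passage to a smooth, non-degenerate, properly embedded limit $\Sigma_\infty$. A priori, a sequence of $H_n$-disks with unbounded curvature could converge to a singular lamination with infinitely many sheets, or develop long thin necks that destroy completeness of the blow-up, or accumulate curvature off the diagonal of the rescaling. The key input needed to rule this out is an intrinsic curvature estimate for $H$-disks, together with a chord-arc (radius) estimate that converts the extrinsic hypothesis $d_{\rth}(p_n,\partial \cD_n) \geq \delta$ into a lower bound on the intrinsic distance from $p_n$ to $\partial \cD_n$. Only after such intrinsic control is established can one apply standard CMC compactness to obtain uniformly bounded geometry on compact subsets of the rescaled sequence and thereby force the contradiction above; securing this intrinsic-to-extrinsic passage is the heart of the argument and is where the embeddedness and disk hypotheses are used most delicately.
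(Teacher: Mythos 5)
Your blow-up argument has a fatal gap in the case $\wt H_n \to 0$, which is in fact the main case (it occurs whenever the $H_n$ stay bounded, e.g.\ $H_n\equiv \cH$, since $\lambda_n\to\infty$). The limit of the rescaled disks $\lambda_n\cD_n$ is then a complete, nonflat, properly embedded minimal surface with $|A|(\vec 0)=\sqrt{2}$ (after the standard normalization) — by the Colding--Minicozzi/Meeks--Rosenberg theory it is a \emph{helicoid}, not a plane, and no half-space or one-sided curvature argument can force it to be flat because the normalization $|A|(\vec 0)>0$ is built into the blow-up. There is simply no contradiction at this stage: rescalings of the helicoid really do produce embedded minimal disks with arbitrarily large curvature arbitrarily far from their boundaries, which is precisely why curvature estimates fail for minimal disks and why the theorem must exploit the positive lower bound on $H$ in a much more delicate way. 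Your proposal never uses $H\geq\cH$ except in the (easier, and here irrelevant) case $\wt H_n\to H_\infty>0$. A secondary problem is circularity: you name the intrinsic curvature estimate and the chord-arc property as the ``key input'' needed to control the blow-up, but in this paper those are \emph{consequences} of the extrinsic estimate being proved (Theorem~\ref{cest} is deduced from Theorem~\ref{ext:cest} via Theorem~\ref{thm1.1}), so they cannot be assumed.

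The paper's actual route is entirely different and is designed precisely to get past the helicoid obstruction. It first proves a curvature estimate for zero-flux planar domains with \emph{fixed} $H$ (Theorem~\ref{thm3.1}): near a point of almost-maximal curvature the surface contains forming helicoids on the curvature scale; a stable minimal disk is inserted on the mean convex side (via Meeks--Yau) and Colding--Minicozzi's extension theorem for stable disks (Theorem~II.0.21 of~\cite{cm21}) propagates a minimal multi-valued graph out to a \emph{fixed} extrinsic scale proportional to $\ve$; a dragging argument with nodoids and a further blow-up then force $M$ itself to contain arbitrarily many pairwise disjoint, oppositely oriented $3$-valued CMC graphs over a fixed annulus, which violates the maximum principle (two such graphs with mean curvature vectors pointing toward each other collapse onto a graph whose mean curvature vector would point both up and down). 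The zero-flux hypothesis — automatic for disks — is essential throughout, and again in the Extrinsic Radius Estimate (Theorem~\ref{ext:rest}), whose proof rules out Delaunay limits because they have nonzero flux. The final deduction of Theorem~\ref{ext:cest} is then a short scaling argument combining the fixed-$H$ estimate (Lemma~\ref{lem:excest}) with the radius estimate. None of this structure is recoverable from the compactness scheme you outline.
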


We wish to emphasize  to the reader that the curvature estimates for
embedded constant mean curvature disks given in
Theorem~\ref{ext:cest}  depend {only} on the {\em lower} positive bound
$\cH$ for their mean curvature. Previous important examples
of curvature estimates for constant mean curvature surfaces, assuming certain geometric conditions,
can be found in the literature;
see for instance~\cite{boti1,boti3, cs1,cm23,cm35,rst1,sc3,ss1,ssy1, tin3,tin4}.

We now give a brief outline of our approach to proving   Theorem~\ref{ext:cest}.
The proof of this theorem is by contradiction and relies on an accurate
geometric description of a $1$-disk near interior points where
the norm of the second fundamental form becomes
arbitrarily large. This geometric description
is inspired by the pioneering work of Colding and Minicozzi
in the minimal case~\cite{cm21,cm22,cm24, cm23};
however in the constant positive mean curvature setting this
description leads to curvature estimates.
Rescalings of a helicoid
give rise to a sequence of embedded minimal disks with arbitrarily large
norms of their second fundamental forms at points that can be arbitrarily far
from their boundary curves; therefore in the minimal setting,  curvature estimates
do not hold.

Finally, this curvature estimate is the key to proving several fundamental results
about the geometry of $H$-surfaces, see~\cite{mt12,mt13,mt14,mt9}. In turn, these
results are used in~\cite{mt15} to generalize the extrinsic curvature estimate to an intrinsic one.

The  theory developed
in this manuscript also provides key tools for understanding the geometry
of $H$-disks in a Riemannian three-manifold, especially in the case that
the manifold is locally homogeneous.  These generalizations and applications of the work presented here
will appear in our forthcoming paper~\cite{mt1}. 


\section{An Extrinsic Curvature Estimate for certain planar domains} \label{sec:out}
First,  we fix some notations that we   use throughout the paper. \vspace{.1cm}

\bit
\item For $r>0$ and $p\in \rth$, $\B(p,r):
=\{x=(x_1,x_2,x_3)\in\rth\mid |p-x|<r\}$ and $\B(r):=\B(\vec{0},r)$.
\item For $r>0$ and $p\in \S$, a surface in $\rth$, $B_{\S}(p,r)$ denotes
the open intrinsic ball in $\S$ of radius $r$.
\item For
positive numbers $r$, $h$ and $t$,
\[
C(r,h,t):=\{(x_1,x_2,x_3)\in\rth\mid (x_1-t)^2+x_2^2\leq r^2, \,|x_3|\leq h \},
\]
which is the solid closed
vertical cylinder of radius $r$, height $2h$ and centered at the
point $(t,0,0)$: $$C(r,h):=C(r,h,\vec0).$$
\item For positive numbers $r_1>r_2>0$, we let
\[
A(r_1, r_2):=\{(x_1,x_2,x_3)\in\rth\mid  r_2<\sqrt{x_1^2+x_2^2}<r_1, \,x_3=0\},
\]
which is the annulus in the plane $ \{x_3=0\}$, centered at the
origin with outer radius $r_1$ and inner radius $r_2$.
\item  For $R>0$ and $p\in \rth$, $C_R(p)$ denotes the infinite solid vertical cylinder
centered at $p$ of radius $R$ and $C_R:=C_R(\vec 0)$.
\eit  \vspace{.1cm}

The first step in proving the intrinsic curvature
estimate for $H$-disks in Theorem~\ref{cest} is to obtain an
extrinsic  curvature estimate, Theorem~\ref{thm3.1} below,
for certain compact  $H$-surfaces
that are planar domains.

Before stating
Theorem~\ref{thm3.1}, we describe the notion of the
flux of a 1-cycle in an $H$-surface; see for instance~\cite{kks1,ku2,smyt1}
for further discussion of this invariant.

\begin{definition} \label{def:flux} {\em
Let $\gamma$ be a piecewise-smooth 1-cycle in an $H$-surface $M$. The  flux of
$\gamma$ is $\int_{\gamma}(H\gamma+\xi)\times \dot{\gamma}$, where $\xi$
is the unit normal to $M$ along $\gamma$ and $\g$ is parameterized by arc length. }
\end{definition}

The flux is a homological invariant and
we say that $M$ has {\em  zero flux} if the flux of any 1-cycle in $M$ is zero;
in particular, since the first homology group of a disk is
zero, the flux of an $H$-disk is zero.

\begin{theorem} \label{thm3.1}
Given $\ve>0$, $m\in \N$  and $H \in(0, \frac{1}{2\ve})$, there exists
a constant $K(m,\ve, H)$ such that the following holds.  Let
$M\subset \ov{\B}(  \ve  )$ be a compact, connected $H$-surface of genus zero
with at most $m$ boundary components, $\vec{0}\in M$, $\partial M \subset \partial
\B( \ve )$ and $M$ has zero flux. Then:
$$|A_M| (\vec{0})\leq  K(m,\ve, H).$$
\end{theorem}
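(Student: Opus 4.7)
The plan is to argue by contradiction. Assume there is no such constant $K(m,\ve,H)$; then the hypotheses supply a sequence $\{M_n\}$ of $H$-surfaces of genus zero with at most $m$ boundary components, each contained in $\ov{\B}(\ve)$ with $\vec 0 \in M_n$, $\partial M_n\subset\partial\B(\ve)$, and zero flux, but with $|A_{M_n}|(\vec 0)\to\infty$. After passing to a subsequence I may assume each $M_n$ has exactly $k\leq m$ boundary components, so the topology is uniformly bounded.

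With $H$ fixed and uniformly bounded topology inside the compact region $\ov{\B}(\ve)$, I would extract a subsequential limit in the sense of CMC laminations: there exists an embedded $H$-lamination $\cL$ of some subset of $\ov{\B}(\ve)$ together with a nonempty singular set $\cS\subset\ov{\B}(\ve)$ containing $\vec 0$, such that $M_n\to\cL$ smoothly away from $\cS$. The central step is a local analysis of $\cL$ near a singular point $p\in\cS\cap\Int\B(\ve)$: one establishes a multi-valued graph description for $M_n$ near $p$ in the spirit of Colding--Minicozzi~\cite{cm21,cm22,cm23,cm24}, and verifies that the leaves of $\cL$ passing through $p$ are smooth stable $H$-surfaces to which the curvature estimate for stable CMC surfaces applies. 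The zero flux hypothesis enters decisively here: it rules out the helicoidal and catenoidal degenerations that obstruct curvature estimates in the minimal case, since any such limit would force a 1-cycle in $M_n$ of nonzero flux for large $n$.

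With the local picture near $\vec 0$ pinned down as a lamination by nearly parallel graphical $H$-sheets, the contradiction comes from the global constraints. The condition $H<1/(2\ve)$ forces a round $H$-sphere to have diameter $2/H>4\ve$, too large to fit inside $\ov{\B}(\ve)$. Combined with the multi-sheet structure near $\vec 0$, the bounded number of boundary components and the connectedness of $M_n$ force some $M_n$ to contain a closed compact $H$-surface component in $\ov{\B}(\ve)$ which, by Alexandrov's theorem, must be a round sphere of radius $1/H$, yielding the desired contradiction. The main technical obstacle is the middle step: developing the lamination compactness and the multi-valued graph description in the CMC setting, while tracking the zero flux condition through the limit process, is where the argument departs substantially from the minimal-case theory, and this appears to be the hardest part of the proof.
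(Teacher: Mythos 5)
There are genuine gaps in your outline, and the two most serious ones are at the local blow-up step and at the final contradiction. First, your description of the local analysis is essentially inverted: in the paper, the blow-up of $M_n$ at a point of almost-maximal curvature (rescaled by $|A_{M_n}|$) converges to a nonflat, properly embedded \emph{minimal} surface, and the zero flux hypothesis is used to show this limit has exactly one end and is therefore a \emph{helicoid} by the uniqueness theorem of Meeks--Rosenberg --- the helicoidal picture is the engine of the proof, not an obstruction that flux rules out. Your claim that the leaves of the limit lamination through the singular point are stable $H$-surfaces to which curvature estimates apply is self-contradictory: a point lies in the singular set precisely because the curvature blows up there, so no curvature estimate can hold at that point, and indeed the limit object at the blow-up scale is a highly non-graphical helicoid. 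Second, your closing argument does not work: $M_n$ is connected with nonempty boundary contained in $\partial\B(\ve)$, so it cannot contain a closed component, and you give no mechanism by which a closed Alexandrov sphere would be produced. The paper's actual contradiction is entirely different: the helicoidal structure forces $M_n$ to contain, for large $n$, arbitrarily many pairwise disjoint, oppositely oriented $3$-valued graphs of constant mean curvature $H$ over a \emph{fixed} annulus with uniformly bounded gradient; two such graphs with mean curvature vectors pointing toward each other must become arbitrarily close, violating the maximum principle (equivalently, a pair collapses to a single CMC graph whose nonzero mean curvature vector would have to point both up and down).

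Finally, the step you correctly identify as the hardest --- passing from the multi-valued graph description on the curvature scale $1/|A_{M_n}|(\vec 0)$ to a description on the fixed scale proportional to $\ve$ --- is where most of the paper's work lies, and your proposal supplies no argument for it. The paper accomplishes this by a topological counting argument (using the bound $m$ on the number of boundary components) to produce a distinguished disk $D\subset M_n$, solving a Plateau problem (Meeks--Yau) in the mean convex region $X_{M_n}$ to obtain a stable embedded \emph{minimal} disk $E$ with $\partial E=\partial D$, applying Colding--Minicozzi's extension theorem for stable minimal disks to show $E$ contains a $10$-valued graph reaching out to scale $\ve/\Omega_1$, and then using a dragging argument with embedded nodoid pieces to force $M_n$ itself to follow this minimal multigraph outward. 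Without some substitute for this mechanism, the lamination-plus-Alexandrov route you sketch does not close.
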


\begin{remark} \label{rem:2.3} {\em In Proposition~\ref{number}
we prove that given an $H$-disk $\Sigma$
such that $\partial \Sigma\subset \partial \B(\ve)$ with $H \in(0, \frac{1}{2\ve})$,
then  the number of boundary components of a connected
component of $\Sigma\cap\B(\ve)$ is bounded from
above by some natural number $N_0$ that is independent of $\Sigma$. Therefore,
Theorem~\ref{thm3.1} together with Proposition~\ref{number} gives
the extrinsic curvature estimate constant $K(N_0,\ve, H)$ for $\S$.}
\end{remark}

The proof of Theorem~\ref{thm3.1} is quite long and
involves the proofs of other important results. Before entering into the
details we outline its organization.

We first  introduce the notion of multi-valued graph, see~\cite{cm22}
for further discussion and Figure~\ref{3-valuedgraph}.
\begin{figure}[h]
\begin{center}
\includegraphics[width=11cm]{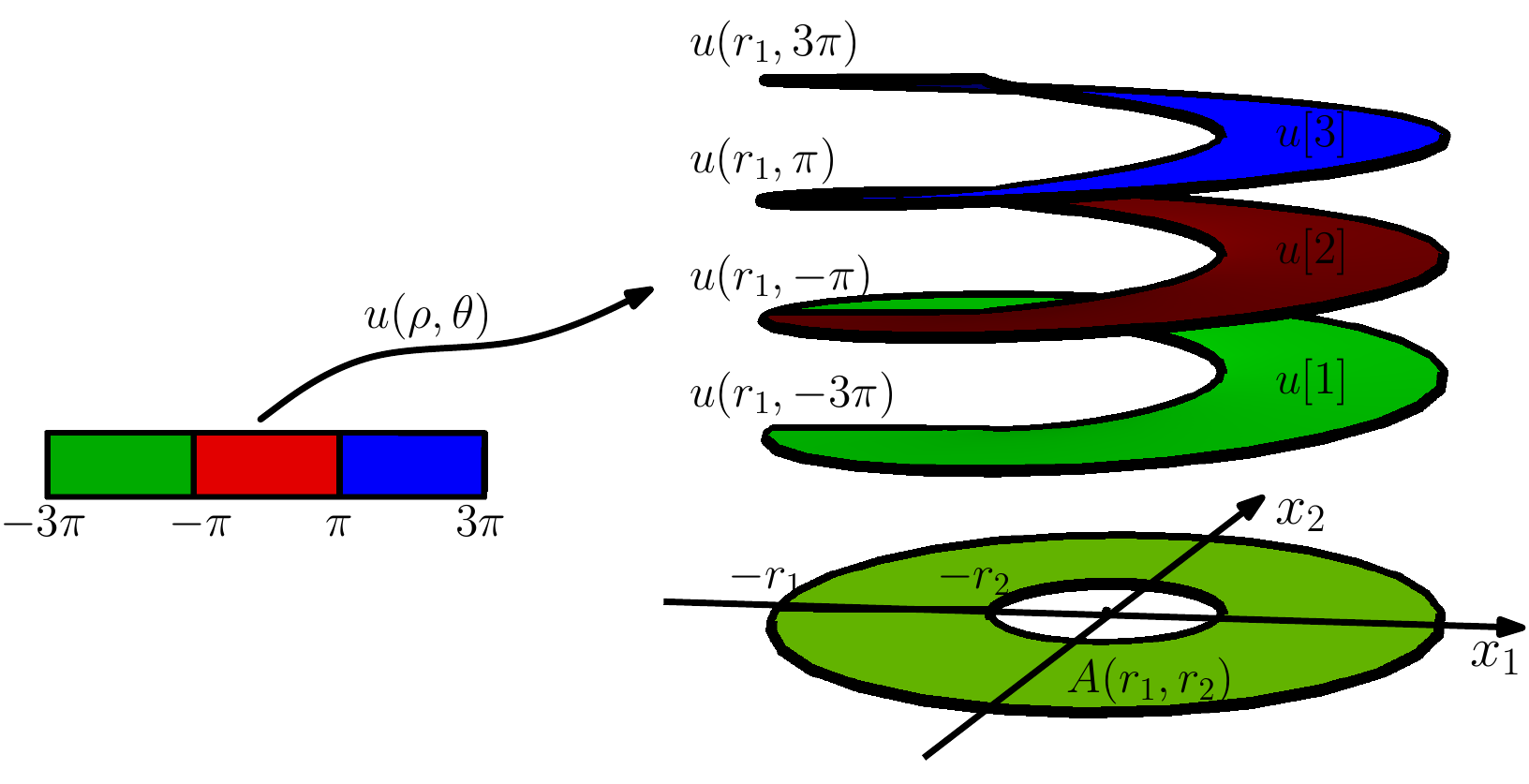}
\caption{A right-handed 3-valued graph.}
\label{3-valuedgraph}
\end{center}
\end{figure}
Intuitively, an $N$-valued graph is a simply-connected embedded surface covering an
annulus such that over a neighborhood of each point of the annulus, the
surface consists of $N$ graphs. The stereotypical  infinite multi-valued
graph is half of a helicoid, i.e., half of an infinite  double-spiral staircase.

\begin{definition}[Multi-valued graph]\label{multigraph} {\rm
Let $\mathcal{P}$ denote the universal cover of the
punctured $(x_1,x_2)$-plane, $\{(x_1,x_2,0)\mid (x_1,x_2)\neq (0,0)\}$, with global coordinates
$(\rho , \theta)$.
\ben[1.] \item
An {\em $N$-valued graph over the annulus $ A(r_1,r_2)$} is a single valued graph
$u(\rho, \theta)$ over $\{(\rho ,\theta )\mid r_2\leq \rho
\leq r_1,\;|\theta |\leq N\pi \}\subset \mathcal{P}$, if $N$ is odd,
or over $\{(\rho ,\theta )\mid r_2\leq \rho
\leq r_1,\;(-N+1)\pi\leq \theta \leq \pi (N+1)\}\subset \mathcal{P}$, if $N$ is even.
\item An  $N$-valued graph $u(\rho,\t)$ over the annulus $ A(r_1,r_2)$ is
called {\em righthanded} \, [{\em lefthanded}] if whenever it makes sense,
$u(\rho,\t)<u(\rho,\t +2\pi)$ \, [$u(\rho,\t)>u(\rho,\t +2\pi)$]
\item The set $\{(r_2,\theta, u(r_2,\theta)), \theta\in[-N\pi,N\pi]\}$ when $N$
is odd (or $\{(r_2,\theta, u(r_2,\theta)), \theta\in[(-N+1)\pi,(N+1)\pi]\}$ when
$N$ is even) is the {\em inner boundary} of the $N$-valued graph.
\een }
\end{definition}

Note that the boundary of  an $N$-valued graph
consists of four connected smooth arcs. They are
a spiral on $\partial C_{r_1} $,  a spiral on $\partial C_{r_2} $ which is the inner boundary,
and two arcs $\gamma^\pm$ connecting the top
and bottom endpoints of these spirals. These latter arcs are of the form
\[
\gamma^\pm=\{(t,0,\phi^\pm(t))\mid t\in [-r_1,-r_2]\},
\]
where  $\phi^\pm$ are smooth functions.

For simplicity, in the next Definitions~\ref{def:middle}
and \ref{def:W}, we  assume $N$ is odd and that the $N$-valued graph is righthanded;
the analogous definitions when $N$ is even or the graph is lefthanded are
left to the reader.  When we encounter
$N$-valued graphs in the proof of Theorem~\ref{thm3.1}, we will also assume, without loss
of generality, that $N$ is odd and the  $N$-valued graph  is righthanded.

\begin{definition} \label{def:middle} {\em
We call the set
\[
u[k]:=\{(\rho,\theta,u(\rho, \theta)) \mid r_2\leq \rho
\leq r_1,\; (-N +2k-2)\pi \leq \theta \leq (-N +2k)\pi \},
\]
 where
$k=1,\dots, N$, the {\em $k$-th sheet of the $N$-valued graph}
and $u_{\rm mid}:=u[[N\slash 2]+1]$ is its {\em middle sheet}; here,
$[N\slash2]$ denotes the integer part of $N\slash 2$.}
\end{definition}

\begin{definition}\label{def:W} {\em
Given an $N$-valued graph $u$, $N>1$, over the annulus $ A(r_1,r_2)$
we let $W[u]$ denote the open solid {\em region
trapped between the sheets} of $u$. Namely, $W[u]$ is the
connected, open, bounded solid region of $\rth$ whose boundary consists of
the $N$-valued graph $u$ together with the following union
of vertical segments: the set of vertical segments whose
end points are $(r_2,\theta, u(r_2,\theta))$,
$( r_2,\theta+2\pi, u(r_2,\theta+2\pi))$ with $\theta\in[-N\pi,(N-2)\pi]$,
the set of vertical segments whose end points are
$(r_1,\theta, u(r_1,\theta))$, $(r_1,\theta+2\pi, u(r_1,\theta+2\pi))$
with $\theta\in[-N\pi,(N-2)\pi]$, the set of vertical
segments whose end points are $(\rho,(N-2)\pi, u(\rho,(N-2)\pi))$,
$(\rho,N\pi, u(\rho,N\pi))$ with $\rho\in[r_2,r_1]$
and the set of vertical segments whose end points are
$(\rho,-N\pi, u(\rho,-N\pi))$,
$(\rho,-(N-2)\pi, u(\rho,-(N-2)\pi))$ with $\rho\in[r_2,r_1]$.
We can parameterize
the set $W[u]$ in a natural way by using coordinates $(\rho,\theta,x_3)$ with
$(\rho,\theta,x_3)$ in an open subset of $  (r_2,r_1)\times (-N\pi,(N-2)\pi)\times \R$.}
\end{definition}

Below are the main steps of the proof of Theorem~\ref{thm3.1}.
Arguing by contradiction, suppose that the theorem fails. In this
case, for some $\ve>0$ and $H\in (0,\frac 1{2\ve})$ there exists a sequence
$M_{n}$ of $H$-surfaces satisfying the hypotheses of the theorem
and $|A_{M_{n}}|(\vec{0})>n$. Since $H$
is fixed, by scaling and after reindexing the elements of the
sequence, we can assume that $\ve<\frac12$,
$H=1$ and $|A_{M_{n}}|(\vec{0})>n$. After replacing $M_{n}$ by
a subsequence  composed by translations, with translation vectors $\vec v_n$
having $|\vec v_n|<\ve\slash 4$, and a fixed rotation,
that we shall still call $M_{n}$, when $n$ is sufficiently large
we will prove the following.
\ben[1.]\item In Section~\ref{sec3}, we show that  $M_{n}$ is closely
approximated  by a vertical helicoid on a small scale
around the origin.
\item In Section~\ref{findmvg}, we prove
that there exists a sequence of embedded stable
\underline{minimal} disks $E(n)$
on the mean convex side of $M_{n}$, where $E(n)$ contains a
multi-valued graph $E^g_n$ that starts near the origin and extends horizontally on a scale
proportional to $\ve$.
\item In Section~\ref{sec3.3}, we use the existence of the minimal
multi-valued graph $E^g_n$ to prove that $M_{n}$ contains a pair
$G_n^{up},G_n^{down}$ of oppositely oriented 3-valued graphs
with norms of the gradients bounded by 1 that start near the origin
and  extend horizontally on a scale proportional to $\ve$, and
satisfying $\ov{W}[G_n^{up}]\cap G_n^{down}$ is a 2-valued graph.
\item Finally, we use the existence of the pairs of 3-valued graphs
$G_n^{up},G_n^{down} \subset M_n$  to obtain a contradiction and thereby
prove the curvature estimate described in Theorem~\ref{thm3.1}. \een

\subsection{Local picture near a point of large curvature.} \label{sec3}
In this section we describe the geometry of constant mean curvature
planar domains with zero flux near  interior points of large
curvature. Roughly speaking, nearby a point with large
norm of the second fundamental form, the planar domain contains
a pair of oppositely oriented multi-valued graphs like in a helicoid. In the
case of embedded minimal disks  such a description was
given by Colding and Minicozzi in~\cite{cm22};
see also~\cite{tin1,tin2} for related results.
By rescaling arguments this description
can be improved upon once one knows that the helicoid is the unique complete,
embedded, non-flat minimal surface in $\rth$ as explained
below;  see~\cite{mr8} and also~\cite{bb1} for
proofs of the uniqueness of
the helicoid
which are based in part on results in \cite{cm21,cm22,cm24,cm23,cm35}.

 Fix $\bar \ve\in (0,\frac \ve 4)$, a sequence $p_n\in M_n\cap \ov{\B}( \bar\ve)$ is a sequence of, so-called,
 points of almost-maximal curvature, if $p_n$ is a maximum for the function
\[
f_n\colon M_n\cap\ov{\B}(\bar\ve) \to [0, \infty), \quad f_n(\cdot)=|A_{M_n}|(\cdot)(\bar\ve-|\cdot|);
\] note that the points $p_n$ lie in the interior $\B(\ov{\ve})$ of $\ov{\B}(\ov{\ve})$.

By a standard compactness argument, see for instance
the proof and statement of Theorem~1.1 in~\cite{mpr20}
or Lemma~5.5 in~\cite{mr8}, given a sequence
$p_n\in M_n\cap \B( \bar\ve)$ of  points of almost-maximal
curvature, there exist positive numbers $\delta_n$,
with $\delta_n\to 0$, and  a subsequence, that we still call $M_{n}$, such that the surfaces
$\widehat{M}_{n}=M_{n}\cap\B(p_n,\delta_n)$, satisfy:
 \ben[1.]\item $\lim_{n\to \infty} \delta_n \cdot |A_{M_{n}}|(p_n)=\infty$.
\item $\sup_{p\in \widehat{M}_{n}}|A_{\widehat{M}_{n}}|(p)\leq
(1+\frac{1}{n})\cdot|A_{M_{n}}|(p_n).$
\item The sequence of translated and rescaled
surfaces
$$\Sigma_{n}=\frac{1}{\sqrt 2}|A_{M_{n}}|(p_n)\cdot(\widehat{M}_{n}-p_n)$$
converges smoothly with multiplicity one or two on compact subsets of
$\rth$ to a connected, properly embedded, nonflat minimal surface
$\Sigma_{\infty}$ with bounded norm of the second fundamental form. More precisely,
\[
\sup_{\Sigma_\infty}|A_{\Sigma_{\infty}}|\leq |A_{\Sigma_{\infty}}|(\vec{0})=\sqrt 2.
\]
\item In the case that the convergence has multiplicity two, then the mean curvature
vectors of the two surfaces limiting to $\Sigma_\infty$
point away from the collapsing region  between them;
recall that the mean curvature vector is $H\xi$ where $\xi$ is the unit normal.
\item Given  any smooth  loop $\a$ in $\Sigma_{\infty}$, for each $n$ sufficiently
large, $\a$ has a normal lift $\a_n \subset \S_{n}$ such that the lifted loops
converge smoothly with multiplicity one to $\a$ as $n\to \infty$;
in the case the convergence has multiplicity two,
there are
exactly two such pairwise disjoint  normal lifts of $\a_n$ to  $\S_n$.
\een

We now give some details on obtaining the above description.
Regarding the convergence of the surfaces
$\S_n$ to $\S_\infty$, the fact that a subsequence
of the rescaled surfaces $\S_n$ converges smoothly to
a connected, properly embedded, nonflat minimal surface
$\Sigma_{\infty}$ with bounded norm of the second fundamental
form can be seen as follows. A standard compactness argument shows that
a subsequence converges to a nonflat minimal lamination of $\rth$ with leaves having uniformly
bounded norms of their second fundamental forms; see for  instance the proof of Lemma~5.5
in~\cite{mr8} for this type of compactness argument. Hence by Theorem~1.6 in~\cite{mr8}, the lamination
consists of a connected, properly embedded, nonflat minimal surface $\S_{\infty}$. The convergence
of the $\S_n$ to  $\S_{\infty}$ is with multiplicity at most two
 because otherwise a higher multiplicity of convergence
 would allow the construction of a  positive Jacobi function on $\S_{\infty}$
 and so $\S_{\infty}$ would be a stable minimal surface that is
a plane~\cite{cp1,fs1,po1}, which is false; hence the multiplicity of convergence is
at most two. The construction of this positive Jacobi function follows the
construction of a similar positive Jacobi function on a limit minimal surface in the proof of
Lemma~5.5 in~\cite{mr8}. However, in our setting of multiplicity of convergence greater
than two, one fixes an arbitrary compact domain $\Omega\subset \S_{\infty}$ containing the origin
and, for $n$ sufficiently large,
finds two domains $\Omega_1(n),\Omega_2(n)\subset \S_n$, that are expressed as
small normal oriented graphs
over $\Omega$, and have the same signed small constant mean curvatures.
The proof of Lemma~5.5 then produces
a positive Jacobi function $F_{\Omega}$ on $\Omega$ with $F_{\Omega}(\vec{0})=1$;
since $\Omega$ is arbitrary, it follows that $\S_{\infty}$ would be stable,
which we already showed is impossible.
This same argument explains
 why, in the case the multiplicity of convergence is two, the similar
 geometric properties in item~4 above hold.
For still further details on this type of multiplicity of convergence
at most two argument and for the collapsing description in item~4,
see for example the proof of Case A of Proposition~3.1 in~\cite{mt9}. There
 it is explained, in a similar but slightly more general situation,
 that the convergence of a certain sequence of $H_n$-disks
to a non-flat limit surface, which is  a helicoid, has multiplicity
at most two.

The multiplicity of convergence being at most two,
together with a standard curve lifting argument,  implies
$\Sigma_{\infty}$ has genus zero and that item~5 holds.
Then  the loop lifting property in item~5
 implies that the flux of the limit
properly embedded minimal surface $\Sigma_\infty$ is
zero. Since a minimal surface properly embedded in $\rth$ with more
than one end has nonzero flux~\cite{cmw1}, then $\Sigma_{\infty}$ has one
end. However, a genus zero surface with one end is simply-connected and so,
$\Sigma_{\infty}$ is a helicoid by~\cite{mr8}.

In summary, arbitrarily
close to the origin, depending on the choice of $\bar\ve$, there exist helicoid-like surfaces (the
surfaces $\Sigma_{n}$ above) forming on $M_{n}$.
Without loss of
generality, after a translation  of the $M_{n}$, we may assume
that $p_n=\vec{0}$ and, abusing the notation, we will still assume that
$\partial M_{n}\subset \partial \B(\ve)$. In actuality
$\partial M_n$ lies on the boundary of a translation of $\partial \B(\ve)$.
 The arguments in the following constructions
would either remain the same or can be easily modified,
if one desires to keep track of these translations.

After a possible rotation, we will also assume that $\Sigma_{\infty}$ is a vertical
helicoid containing the vertical $x_3$-axis and the
$x_2$-axis.

The proof of Theorem~\ref{thm3.1} breaks up into the following two cases:
\vspace{.2cm}

\noindent{\bf Case ${\mathcal A}$:} The convergence of $\Sigma_{n}$ to
$\Sigma_{\infty}$ has multiplicity one.

\vspace{.2cm}
\noindent{\bf Case ${\mathcal B}$:} The convergence of $\Sigma_{n}$ to
$\Sigma_{\infty}$ has multiplicity two.
\vspace{.2cm}

We  will consider both {\bf Case ${\mathcal A}$}  and {\bf Case ${\mathcal B}$}
simultaneously. However, our constructions in {\bf Case ${\mathcal B}$} will be
based on using only  the forming helicoids on the surfaces $M_n$ that actually pass
through the origin.  In a first reading of the following proof, we suggest that the
reader assume {\bf Case ${\mathcal A}$}  holds, as it is simpler
to follow the constructions  and the figures that we present
in this case.

 The Description~\ref{description} below follows from
 the smooth convergence of the $\S_n$ to $\S_\infty$ and because
 the statements in it hold for the limit vertical
helicoid $\S_\infty$ that contains the $x_3$-axis and $x_2$-axis.



\begin{ddescription}\label{description} {\em
Given $\ve_2\in(0,\frac12)$ and $N \in \N$, there exists $\overline{\omega}>0$
such that for any $\omega_1>\omega_2>\overline{\omega}\,$ there exist an
$n_0\in \N$ and positive numbers $r_n$, with $r_n=\frac{\sqrt2}{|A_{M_n}|(p_n)}$, such
that for any $n>n_0$ the following statements hold.
For clarity of exposition we abuse  notation and we let $M=M_n$ and $r=r_n$.
\ben[1.]\item $M\cap C(\omega_1 r,  \pi (N+1) r)$
consists of either one disk component, if
{\bf Case ${\mathcal A}$} holds, or two disk components, if
{\bf Case ${\mathcal B}$}
holds. One of the two possible disks in
$M\cap C(\omega_1 r,  \pi (N+1) r)$ contains the origin and we
denote it by $M(\omega_1 r)$. If {\bf Case ${\mathcal B}$}
holds, we denote the other component by $M^*(\omega_1 r)$.
\item $M(\omega_1 r)\cap C(\omega_2 r,  \pi (N+1) r)$ is
also a disk and we denote it by $M(\omega_2 r)$.
\item For any $t\in[-(N+1)\pi r,(N+1) \pi r]$, $M(\omega_1 r)$ intersects
the plane $\{x_3=t\}$ transversely in a single arc and when $t$ is
an integer multiple of $\pi r$,
this arc is disjoint from the solid vertical cylinder $C(\frac12, 1, \frac12 +\omega_2 r)$.
In particular, $M(\omega_1 r)\cap C(\frac12, 1, \frac12 +\omega_2 r)$
is a collection of $2N+2$ disks, each of
which is a graph over
\[
\{x_3=0\}\cap
C(\omega_1 r,1)\cap  C\left(\frac{1}{2},1,\frac{1}{2}+\omega_2 r\right).
\]
A similar description is valid  for $M^*(\omega_1 r)$, if {\bf Case ${\mathcal B}$} holds.
\item $M(\omega_1 r)\cap [C(\omega_1 r,  \pi (N+1) r)
-\Int (C(\omega_2 r,  \pi (N+1) r))]$, that is
\[
M(\omega_1 r)-\Int (M(\omega_2 r)),
\]
contains two oppositely oriented $N$-valued graphs $u_1$ and $u_2$ over
 $A(\omega_1 r, \omega_2 r)$. Moreover, these graphs $u_1$
 and $u_2$ can be chosen so that if {\bf Case ${\mathcal A}$} holds then
 the related regions between the sheets satisfy
\[
(\overline W[u_1]\cup\overline W[u_2])\cap M= {\rm graph}(u_1)\cup {\rm graph}(u_2),
\]
where $\overline W[u]$ denotes the closure of $W[u]$ in $\rth$.
In other words, no other part of $M$ comes between the sheets of
${\rm graph}(u_1)$ and ${\rm graph}(u_2)$.

If {\bf Case ${\mathcal B}$} holds,
$M^*(\omega_1 r)\cap [C(\omega_1 r,  \pi (N+1) r)-\Int (C(\omega_2 r,  \pi (N+1) r))]$
contains  another pair of oppositely oriented $N$-valued graphs $u^*_1$ and $u^*_2$ over
 $A(\omega_1 r, \omega_2 r)$ and
\[
(\overline W[u_1]\cup\overline W[u_2]\cup\overline W[u^*_1]\cup\overline W[u^*_2])\cap M=
\]
\[ {\rm graph}(u_1)\cup {\rm graph}(u_2)\cup {\rm graph}(u^*_1)\cup {\rm graph}(u^*_2).
\]

\item The separation between the sheets of the  $N$-valued
graphs $u_1$ and $u_2$ is bounded, i.e., for
$\rho_1,\rho_2\in [\omega_2r, \omega_1 r],\; |\theta_1-\theta_2|\leq 4\pi $ and $ i=1,2$,
\[
|u_i(\rho_1,\theta_1)-u_i(\rho_2,\theta_2)|<6\pi r.
\]

The same is true for $u^*_i$, if {\bf Case ${\mathcal B}$} holds.
\item $|\nabla u_i |<\ve_2$, $i=1,2$. The same is true for
$u^*_i$, if {\bf Case ${\mathcal B}$} holds.
\een }
\end{ddescription}

For the sake of completeness, in the discussion below we provide
some of the details that lead to the above description.

Let $\Sigma_\infty$ be the vertical helicoid containing the $x_2$ and $x_3$-axes
with  $|A_{\Sigma_\infty}|(\vec{0})=\sqrt 2$ and let $\Sigma_{n}$ be as defined in the
previous discussion. Then $\Sigma_{n}$ converges smoothly with multiplicity one or two
on compact subsets of $\rth$ to $\Sigma_\infty$. Hence for
any $\omega_1>\omega_2>0$ and $N \in \N$,  each of the
intersection sets $\Sigma_n\cap C(\omega_1 ,  \pi (N+1) )$ and
$\Sigma_n\cap C(\omega_2 ,  \pi (N+1) )$
consist of either one or two disk components that satisfy the
description in item~3, if $n$ is taken
sufficiently large. For simplicity, we provide some further details
when {\bf Case ${\mathcal A}$} holds. If $\overline{\omega}$ is
sufficiently large, given
$\omega_1>\overline{\omega}$, then $\Sigma_\infty\cap [C(\omega_1, \pi (N+1))
-\Int(C(\overline{\omega}, \pi (N+1)))]$
contains two oppositely oriented $N$-valued graphs $v_1$ and $v_2$ over
$A(\omega_1, \overline{\omega})$  such that
\[
|v_i(\rho_1,\theta_1)-v_i(\rho_2,\theta_2)|<5\pi, \, i=1,2,
\]
\[
\rho_1,\rho_2\in [\omega_2, \omega_1 ],\; |\theta_1-\theta_2|\leq 4\pi ,
\]
 and $|\nabla v_i |\leq \frac{\ve_2}{2}$, $i=1,2$ and
 nothing else is trapped between the sheets of
 ${\rm graph}(u_1)$ and ${\rm graph}(u_2)$ in the sense
 made precise by the previous description. Given
$\omega_2\in (\overline{\omega},\omega_1)$, because of the smooth
convergence, there exists $\overline{n}\in \N$ such that for any
 $n>\overline{n}$, then
 \[
\Sigma_n \cap [C(\omega_1,  \pi (N+1))-\Int(C(\overline{\omega},  \pi (N+1)))]
 \]
 contains  two oppositely oriented $N$-valued graphs $\wt{u}_1$ and $\wt{u}_2$ over
$A(\omega_1, \omega_2)$  such that
for any $k=1, \dots, N-1$,
\[
|\wt{u}_i(\rho_1,\theta_1)-\wt{u}_i(\rho_2,\theta_2)|<6\pi,
\]
\[
\rho_1,\rho_1\in [\omega_2, \omega_2 ],\; \theta_1,\theta_2\in[(-N +2k-2)\pi,  (-N +2k+2)\pi]
\]
and  $|\nabla \wt{u}_i |\leq \ve_2$, $i=1,2$. By definition
$\Sigma_n=\frac{1}{\sqrt 2} |A_{M_{n}}(\vec{0})|\widehat M_n$
and thus, rescaling proves that items 4, 5 and 6 hold.

Recall that a smooth domain in $\rth$
is {\em mean convex} if the mean curvature vector
of the boundary of the domain points into the domain.

\begin{definition} \label{defXM} {\em  $X_M$ is the closure of the
component of $\ov{\B}(\ve)-M$ such that
$\partial X_M$ is mean convex. }
\end{definition}
The
next lemma will be applied several times when
{\bf Case ${\mathcal B}$} holds and it gives some information on
the topology and geometry of $X_M\cap C(\omega_1 r,  \pi (N+1) r)$.

\begin{lemma} \label{caseB}  The set
$X_M\cap C(\omega_1 r,  \pi (N+1) r)$ has  one connected component
if  {\bf Case ${\mathcal A}$} holds, or it has
two connected components if {\bf Case ${\mathcal B}$}
holds.
\end{lemma}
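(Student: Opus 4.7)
My plan is to analyze the topology of $\Int(C)\setminus M$, where $C=C(\omega_1 r,\pi(N+1)r)$, and then identify which of its connected components lie in $X_M$ by using the mean-convex side condition.

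First, I invoke Description~\ref{description}(1): $M\cap C$ is a single disk $M(\omega_1 r)$ properly embedded in $C$ with boundary on $\partial C$ in Case $\mathcal{A}$, or the disjoint union of two such disks $M(\omega_1 r)\sqcup M^*(\omega_1 r)$ in Case $\mathcal{B}$. Since $\partial C$ is topologically a $2$-sphere and the boundary of each such disk is a simple closed curve on $\partial C$, the Jordan--Brouwer separation theorem (applied component-by-component) implies that each disk separates $C$. Hence $\Int(C)\setminus M$ has exactly two open components $U_+,U_-$ in Case $\mathcal{A}$, and exactly three open components $U_1,U_2,U_3$ in Case $\mathcal{B}$, where $U_2$ denotes the ``middle'' collapsing region squeezed between $M(\omega_1 r)$ and $M^*(\omega_1 r)$.

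Next, each such component $U$ is open and connected in $\ov{\B}(\ve)\setminus M$, so $U$ lies entirely in one component of $\ov{\B}(\ve)\setminus M$; in particular either $U\subset X_M$ or $U\cap X_M=\emptyset$. To decide which, I use the defining property that $\partial X_M$ is mean convex: along each smooth point of $\partial X_M\cap M$ the mean curvature vector $H\xi$ points into $X_M$, so the side of $M$ lying in $X_M$ is precisely its mean-convex side. In Case $\mathcal{A}$ this identifies exactly one of $U_{\pm}$ as contained in $X_M$, and $X_M\cap C$ is its closure in $C$, a single connected set. In Case $\mathcal{B}$, property~4 of the almost-maximal-curvature list in Section~\ref{sec3} forces the mean curvature vectors on the two forming helicoids to point away from the collapsing region, so the mean-convex sides of $M(\omega_1 r)$ and $M^*(\omega_1 r)$ are the two outer regions, say $U_1$ and $U_3$, while $U_2$ lies on the mean-concave side of both disks. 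Thus $X_M\cap\Int(C)=U_1\cup U_3$.

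The only real obstacle is bookkeeping. First, one must check that the local ``mean-convex side'' identification propagates over the full component of $\Int(C)\setminus M$, which is immediate from connectedness together with the fact that $X_M$ is itself a single component of $\ov{\B}(\ve)\setminus M$. Second, in Case $\mathcal{B}$ one must verify that the closures of $U_1$ and $U_3$ in $C$ do not accidentally merge along $\partial C$; this follows because the boundary curves of $M(\omega_1 r)$ and $M^*(\omega_1 r)$ on $\partial C$ are disjoint simple closed curves separating $\partial C$ into three regions, so the outer pieces stay apart on the side wall as well. Passing to closures in $C$ then gives exactly one component in Case $\mathcal{A}$ and exactly two in Case $\mathcal{B}$, as claimed.
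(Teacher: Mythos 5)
Your proof is correct and follows essentially the same route as the paper's: both arguments reduce to Description~\ref{description}(1) plus elementary separation in the simply-connected solid cylinder, and in Case ${\mathcal B}$ both identify the middle collapsing region as excluded from $X_M$ by invoking item~4 of the convergence description (the mean curvature vectors of the two forming sheets point away from the region between them). Your version merely spells out the bookkeeping (which components of $\Int(C)\setminus M$ lie in $X_M$, and why their closures do not merge along $\partial C$) a little more explicitly than the paper does.
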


\begin{proof}
If {\bf Case ${\mathcal A}$} holds, then $M\cap C(\omega_1 r,  \pi (N+1) r)$ consists of
a single disk and as $C(\omega_1 r,  \pi (N+1) r)$
is simply-connected, the lemma follows by elementary separation properties.

If  {\bf Case ${\mathcal B}$} holds, then
$M\cap C(\omega_1 r,  \pi (N+1) r)$ consists of two disjoint disks, meaning
that $\Sigma_{n} \cap C(\omega_1 ,  \pi (N+1))$ consists of two disjoint disks,
say $D_1$ and $D_2$, for $n$ large. Thus
$[C(\omega_1 ,  \pi (N+1))-\Sigma_{n}] \cap C(\omega_1 ,  \pi (N+1))$
consists of three connected components. Let $\Omega$ be the component of
$[C(\omega_1 ,  \pi (N+1))-\Sigma_{n}] \cap C(\omega_1 ,  \pi (N+1))$ such that
$\partial \Omega=D_1\cup D_2 \cup A$ where $A$ is the annulus
in $\partial C(\omega_1 r,  \pi (N+1) r)$
with boundary $\partial D_1\cup \partial D_2$.
In order to prove the lemma, it suffices to show that the normal vectors to $D_1$
and $D_2$ point towards the exterior of $\Omega$.  This property follows from the
earlier description that when the surfaces $\Sigma_n$ converge
with multiplicity two to the helicoid
$\Sigma_\infty$, then the region between them collapses and the
mean curvature vectors of the boundary surfaces
of this region point away from it; here the region $\Omega$
corresponds to a part of this collapsing region.
 This finishes the proof of the lemma.
\end{proof}

\begin{remark}\label{casebremark} {\em
Suppose {\bf Case ${\mathcal B}$} holds and let $X'_M$ be
one of the two connected components of
$X_M\cap C(\omega_1 r,  \pi (N+1) r)$. For later reference, we note that by elementary
separation properties, if $\gamma$ is an open arc
in $C(\omega_1 r,  \pi (N+1) r)$ with endpoints
$\partial \gamma \in \partial X'_M$ and
$\gamma\cap \partial X_M=\O$, then either
$\gamma\subset X'_M$ or $\gamma\subset [\ov{\B}(\ve)-X_M]$.}
\end{remark}

\subsection{Finding a minimal multi-valued graph on a fixed  scale}\label{findmvg}
In what follows we wish to use the highly sheeted multi-valued
graph forming on $M$ near the origin  to construct
a \underline{minimal} 10-valued  graph forming
near the origin that extends horizontally on a
scale proportional to $\ve$, and that lies on the mean convex side of $M$. Recall that $\ve<\frac12$.
The planar domain $M$ satisfies
Description~\ref{description} for certain constants
$\ve_2\in(0,\frac12)$, $\omega_1>\omega_2>\overline{\omega}>0$,
 $r>0$ and $N \in \N$ (for $n$ sufficiently large). These constants will be finally fixed toward
the end of this section. Nonetheless, in order for
some of the statements to be meaningful,
we will always assume $N$ to be greater than $m+4$ where $m$
is the number of boundary components of $M$.
Recall that $M(\omega_1 r)$ and $M(\omega_2 r)$ are
disks in $M$ and that each disk
resembles a piece of a scaled helicoid and contains the
origin, c.f., Description~\ref{description}.

Consider the intersection of
\[[{\rm graph}(u_1)\cup {\rm graph}(u_2)]\cap
C\left(\frac{1}{2},1,\frac{1}{2}+\omega_2 r\right);\]
 recall that $C(\frac{1}{2},1,\frac{1}{2}+\omega_2 r)$
 is the truncated solid vertical cylinder of radius $\frac 12$,
 centered at $(\frac 12+\omega_2 r, 0 ,0)$ with $|x_3|\leq 1$.
 This intersection consists of a collection of disk components \
\[
\Delta=\{\Delta_1, \ldots, \Delta_{2N}\},
\]
and each $\Delta_i$ is a graph over
\[
\{x_3=0\}\cap
C(\omega_1 r,1)\cap  C\left(\frac{1}{2},1,\frac{1}{2}+\omega_2 r\right).
\]

Because $M$ is embedded, the components of $\Delta$ can be assumed to
be ordered by their relative vertical heights and then,
by construction, the mean curvature vectors of consecutive components $\Delta_i$ and
$\Delta_{i+1}$    have oppositely signed $x_3$-coordinates.
Without loss of generality, we will henceforth assume that the surface
 $\partial C\left(\frac{1}{2},1,\frac{1}{2}+\omega_2 r\right)$ is
in general position with respect to $M$.

Let ${\mathcal
F}=\{F({1}),F({2}),\ldots, F({2N})\}$ be the ordered listing  of the components of $M\cap
C(\frac{1}{2},1,\frac{1}{2}+\omega_2 r)$ that intersect
the union of the disks in $\Delta$, and that are indexed so that $\Delta_i
\subset F(i)$ for each $i\in \{1,2,\ldots,2N\}$. Note that $\Delta_i$
and $\Delta_{i+j}$, for some $j\in \N$, may be contained in
the same component of $M\cap
 C(\frac{1}{2},1,\frac{1}{2}+\omega_2 r)$ and so,  $F(i)$ and $F(i+j)$
may represent the same set.

\begin{property}\label{property2}
 \ben[1.]\item Suppose $ i\in \{1,2,\ldots,2N-1\}$. If $F(i)\cap \partial M =\O$ and
the mean curvature vector of  $\Delta_i \subset F(i)$ is upward pointing, then
$ F({i+1})\cap
\partial M \not =\O$ or $F(i) =F({i+1})$.
\item Suppose $i\in\{2,3,\ldots, 2N\}$. If $F(i)\cap \partial M =\O$ and
the mean curvature vector of  $\Delta_i \subset F(i)$ is downward pointing, then
$ F({i-1})\cap
\partial M \not =\O$ or $F(i) =F({i-1})$. \een
\end{property}
\begin{proof}
We will prove the first property; the proof of the second property
is similar.

Suppose that $ i\in \{1,\ldots,2N-1\}$ and
the mean curvature vector of  $\Delta_i \subset F(i)$ is
upward pointing. Assume that $F({i+1})\cap \partial M=\O=F({i})\cap \partial M$,
and we will prove that $F(i) = F({i+1})$. Since  $F({i})\cap \partial M=\O$, then
$\partial F(i)\subset \partial C(\frac{1}{2},1,\frac{1}{2}+\omega_2 r)$
and so $F(i)$ separates the simply-connected domain $ C(\frac{1}{2},1,\frac{1}{2}+\omega_2 r)$
into two  connected domains.  Since the top and bottom disks in
$ \partial C(\frac{1}{2},1,\frac{1}{2}+\omega_2 r)$ are disjoint from $\ov{\B}(\ve)$
and lie in the same component of $C(\frac{1}{2},1,\frac{1}{2}+\omega_2 r)- F(i)$,
then the closure of one of these two  connected domains, which we denote by $X(F(i))\subset
C(\frac{1}{2},1,\frac{1}{2}+\omega_2 r)$, is disjoint from the
top and bottom disks of the solid cylinder and it follows that
$X(F(i))\subset \ov{\B}(\ve)\cap C(\frac{1}{2},1,\frac{1}{2}+\omega_2 r)$.

 For $t\geq 0$,
consider the family of surfaces
\[
\Omega_t=\partial C\left (\frac{1}{2},1,t+\frac{1}{2}+\omega_2
r\right).
\]
 The
maximum principle for $H=1$ surfaces applied to the family $\Omega_t$
shows that the last surface $\Omega_{t_0}$ which intersects $X(F(i))$,
intersects $F(i) \subset \partial X(F(i))$ at a point where the mean
curvature vector of $F(i)$ points into $X(F(i))$.  Hence, $F(i)$ is mean convex when
considered to be in the boundary of $X(F(i))$. 

Consider now a vertical line
segment $\sigma$ joining a point of the graph $\Delta_i\subset F(i)$ to a point
of the graph $\Delta_{i+1}\subset F({i+1})$; note that by Lemma~\ref{caseB}
and since the mean
curvature vector of $\Delta_i$ is upward pointing, the interior of $\sigma$ is
disjoint from $M$, independently of whether  {\bf Case~${\mathcal A}$} or
{\bf Case ${\mathcal B}$} holds. Since $X(F(i))$ is mean convex and the mean
curvature vector of $\Delta_i$ is upward pointing,
$\sigma$ is contained in $X(F(i))$. Since
$ F({i+1})\cap\partial M =\O$, then  the similarly defined
compact domain $X(F({i+1}))\subset C(\frac{1}{2},1,\frac{1}{2}+\omega_2
r)$ intersects  $X(F(i))$ at the point
$[\sigma \cap \Delta_{i+1}]\subset X(F({i+1}))$
and so, since $F(i+1)$ is either equal to or disjoint
from $F(i)$, then   $X(F({i+1}))\subset X(F(i))$.

Since $X(F_{i})$ intersects
$X(F({i+1}))$ at the point
$[\sigma \cap \Delta_i ]\subset X(F({i}))$,  the previous arguments imply
$X(F({i}))\subset X(F({i+1}))$. Because we have already shown
$X(F({i+1}))\subset X(F(i))$,  then $X(F(i))=X(F({i+1}))$, which
implies $F(i)=F({i+1})$. This completes the proof.\end{proof}

\begin{property}
\label{property3} There are at most $m-1$ indices $i$,  such that
$F(i) = F({i+1})$ and $F(i) \cap \partial M=\O$. \end{property}
\begin{proof}
Arguing by contradiction, suppose that there exist $m$ increasing
indices \linebreak $\{i(1), i(2), \ldots, i(m) \}$ such that for $j\in\{ 1, 2,\ldots,
m\}$,
\[
F(i(j))\cap \partial M= \O\text{ and } F(i(j))= F(i(j)+1).
\]
 Note that
for each $j\in\{ 1, 2,\ldots,
m\}$, $F(i(j))\cap M(\omega_1 r)$ contains the disks
$\Delta_{i(j)}$, $\Delta_{i(j)+1}$. Also note that since $F(i(j))\cap \partial M= \O$
 for $j\in\{ 1, 2,\ldots,
m\}$, then
$\cup_{j=1}^{m} \partial F(i(j))\subset \partial C\left(\frac12,1, \frac12 +\omega_2r \right)$

Let ${\mathcal F}=\{F(i(1)), F(i(2)), \ldots, F(i(m)) \}$ and let
$F_1, \ldots, F_{m'}$ be a listing of the distinct components in
${\mathcal F}$. For each $i=1,\dots,m'$, let $n_i\geq 2$ denote the number of components of
$F_i \cap M(\omega_1 r)$ and let $d_i $ denote the number of times
that $F_i$ appears in the list ${\mathcal F}$.  Note that
$n_i\geq d_i +1$. Recall that by item~3 of Description~\ref{description},
each component of $ M(\omega_1 r)\cap  C(\frac12, 1, \frac12 +\omega_2 r)$
is a disk which intersects
$\partial M(\omega_1 r)$ in a connected arc.

We next estimate the Euler characteristic of $M(\omega_1 r)\cup
\bigcup_{j=1}^{m}F(i(j))$ as follows. Using that
$\chi(F_i)\leq 1$ for each $i$ and $\sum_{i=1}^{m'} d_i =m$ gives
\[
\begin{split}
\chi \left(M(\omega_1 r)\cup\bigcup_{j=1}^{m}F(i(j))\right)
&=\chi(M(\omega_1 r))+  \chi\left(\bigcup_{i=1}^{m'}F_i\right)  -
\chi\left(M(\omega_1 r)\cap\bigcup_{i=1}^{m'}F_i\right)\\ & =1+
\sum_{i=1}^{m'} \chi(F_i) - \sum_{i=1}^{m'} n_i\\&\leq 1 + m'- \sum_{i=1}^{m'} (d_i+1) =1-m.
\end{split}
\]

Since the Euler characteristic of
a connected, compact orientable surface is $2-2g-k$ where $g$ is the genus and $k$ is
the number of boundary components, and since
$M(\omega_1 r)\cup\bigcup_{j=1}^{m}F(i(j))$ is a connected
planar domain, which means $g=0$, then the previous inequality
implies that the number of boundary components of   $M(\omega_1 r)\cup
\bigcup_{j=1}^{m}F(i(j))$ is at least $m+1$.
The hypothesis that
 for $j\in\{ 1, 2,\ldots,
m\}$,
$
F(i(j))\cap \partial M= \O$, implies that
each boundary component of $M(\omega_1 r)\cup
\bigcup_{j=1}^{m}F(i(j))$ is disjoint from
the boundary  of $M$.
Since $M$ is a planar domain, each of these boundary components separates $M$,
which  implies that $M-[M(\omega_1 r)\cup
\bigcup_{j=1}^{m}F(i(j))]$ contains at least $m+1$
components.
Since $M$ only has $m$ boundary components,
one of the components of  $M-[M(\omega_1 r)\cup
\bigcup_{j=1}^{m}F(i(j))]$,
say $T$, is disjoint from $\partial M$. Note that
 $\partial T \subset C(\frac{1}{2},1,\frac{1}{2}-\omega_1 r)$
 since $\partial T \subset [\partial M(\omega_1 r)
 \cup \bigcup_{j=1}^{m} \partial F(i(j))]\subset [\ov{\B}(\ve)\cap  C(\frac{1}{2},1,\frac{1}{2}-\omega_2 r)]$.

Suppose for the moment that $\partial T\cap \partial M(\omega_1 r)\neq \O$,
and we will arrive at a contradiction.
In this case, $\partial T $ contains at least one point $p$ in
$\partial M(\omega_1 r) \cap \partial C(\frac{1}{2},1,\frac{1}{2}-\omega_1 r)$
and since $T$ is disjoint from $M(\omega_1 r)$, $T$ contains points outside
$C(\frac{1}{2},1,\frac{1}{2}-\omega_1 r)$  near $p$.
For $t\geq0$, consider the family of translated surfaces
\[
\Omega_t=\partial
C\left(\frac{1}{2},1, \frac{1}{2}- \omega_1 r \right) + ( -t,0,0).
\]
 Since the last such
translated surface $\Omega_{t_0}$ which intersects $T$, does so at a
point in the interior of $T$ and $T$ is contained on the mean convex
side of $\Omega_{t_0}$ near this point, a standard application of the
maximum principle for $H=1$ surfaces gives a contradiction. This
contradiction proves that $\partial T\cap \partial M(\omega_1 r)= \O$.

Since  we may now assume that $\partial T\cap \partial M(\omega_1 r)= \O$, then
\[
\partial T \subset \bigcup_{j=1}^{m} \partial F(i(j))=\bigcup_{j=1}^{m} \partial F(i(j))\cap \ov{\B}(\ve)
\subset C\left(\frac{1}{2},1,\frac{1}{2}+\omega_2 r\right).
\]

Let $p\in \partial T$.
Since for some $j$, $p\in \partial F(i(j)) \subset \partial C(\frac12,1,\frac12 +\omega_2 r)$
and also since $T$ is disjoint from $F(i(j))\subset C(\frac12,1,\frac12+\omega_2 r)$,
then $T$ contains points outside of $C(\frac{1}{2},1,\frac{1}{2}+\omega_2
r)$.
A straightforward modification of the arguments in the previous
paragraph using the maximum principle applied to the family of translated surfaces
\[
\Omega_t=\partial
C\left(\frac{1}{2},1, \frac{1}{2}+ \omega_2 r\right)  + ( -t,0,0)
\] gives a contradiction. This
contradiction completes the proof that Property~\ref{property3} holds. \end{proof}

The next two propositions imply that if the $N$-valued
graphs in $M$ forming nearby the origin contains a sufficiently large number
of sheets, then it is possible to find a disk in $M$
whose boundary satisfies certain geometric properties.

\begin{property}
\label{property4} Suppose that there exist $m+1$ indexed domains $$F(i(1)),\, F(i(2)),
\, \ldots , F(i(m+1)),$$ not necessarily
distinct as sets, with increasing indices and each of which
intersects a fixed boundary component $\nu$ of $\partial M$ and such that the
mean curvature vectors of the subdomains $\Delta(i(1)),\, \Delta(i(2)),
\, \ldots , \Delta(i(m+1))$ are all upward pointing or all downward pointing. Then
there exists a collection $\Gamma = \{ \g(1), \g(2), \ldots,
\g(m+1) \}$ of $m+1$  pairwise disjoint,
embedded  arcs $\g(j)\subset F(i(j))$ with end points in $\partial F(i(j))$
such that: \ben[1.]\item  For each $j$,
$\g(j)\cap M(\omega_1 r)=\g(j)\cap C(\omega_1 r,  \pi (N+1) r) $
is the arc $\Delta(i(j))\cap \{x_2=0\}$ and $\g(j)$ has the point $p_j$ in
$\Delta(i(j))\cap \partial C(\omega_2 r, 1)$ as one of its end points. The boundary
of $\g(j)$ consists of $p_j$ and a point in
 $\nu$.
\item $M- (M(\omega_2 r)\cup \bigcup _{j=1}^{m+1} \g(j))$
contains a  component whose closure  is a disk
$D$ with  $\partial D$ consisting of an arc  $\a\subset \nu$,
two arcs in $\Gamma$ and an arc $\be$ on  the component $\tau$ of
$\partial M(\omega_2 r)\cap \partial C(\omega_2r,1)$ that intersects $\Delta(i(1))$.
\item Furthermore,
if for each $j\in \{1, 2, \ldots, m\}$, $i(j+1)-i(j)\geq 2\wt{N}+2$ for some $\wt{N}\in \N$, then
$D\cap(M(\omega_1 r)-\Int(M(\omega_2 r)))$ contains an $\wt{N}$-valued graph over the
annulus $A(\omega_1 r,\omega_2r)$.
\een
\end{property}

\begin{proof}
For each $j\in \{1, 2, \ldots, m+1\}$, consider an embedded arc
$\g(j)$ in $F(i(j))$ joining the point $p_j  \in \Delta(i(j))\cap
\partial C(\omega_2 r, \pi (N+1)r)$ to a  point in $\nu $;
this is possible since $F(i(j))$ intersects $\nu$.
Since $F(i(j))- M(\omega_1 r)$ is path connected,
then one can choose $\g(j)$ to intersect
$C(\omega_1 r,  \pi (N+1) r)$ in the arc $\Delta(i(j))\cap \{x_2=0\}$.
As
the arcs in $\Gamma =\{\g(1), \g(2), \ldots , \g(m+1) \}$ can also be
constructed to be pairwise disjoint, it is
straightforward to check that item~1 holds for the collection
$\G$.

Since  the mean curvature vectors of the domains
$\Delta(i(1)),\,  \ldots , \Delta(i(m+1))$  are all upward
or all downward pointing, without loss of generality we can assume that
  all of the points $p_j$ lie on
  \[
\tau:=  \partial {\rm graph}(u_1)\cap \partial C_{\omega_2 r},
\]
that is the inner boundary of ${\rm graph}(u_1)$.

Since $M$ is a planar domain,
$M- (M(\omega_2 r)\cup \Gamma)$ contains $m+1$
components. Because $M$ has $m$ boundary components, it follows that
at least two
of these $m+1$ components are disjoint from $\partial M -
\nu$ and the closure of one of these components, say $D$,
intersects $\tau$ in a subarc $\be$ and it does not intersect
$ \partial M(\omega_2 r)-\beta$. It  follows that $D$ is a disk with
boundary as described in
item 2 of Property~\ref{property4}.

Item~3 follows immediately from the  construction of $D$. \end{proof}

\begin{proposition} \label{disk}
Suppose that $2N\geq m(2m+2)(m + 2\widetilde{N} +2)$. Then there exists a compact disk $D\subset M$
such that $D\cap(M(\omega_1 r)-\Int(M(\omega_2 r)))$ contains an $\widetilde{N}$-valued graph
$u^+$ over the annulus $A(\omega_1 r,\omega_2 r)$. Moreover, there exist two indices $i$ and $j$,
with $i-j \geq 2\widetilde{N}+2$, such that  $\partial D$ consists of four arcs
$\a, \sigma_1, \sigma_2, \beta$ satisfying:
\ben[1.]\item  $\a = \partial D \cap \partial M\subset \partial\B(\ve)$;
 \item $\sigma_1=\g(i)$ is an arc in $ F(i)$ with end points in $\partial F(i)$, and such that \\
 $\g(i)\cap M(\omega_1 r)=\g(i)\cap C(\omega_1 r,  \pi (N+1) r)  =\Delta(i)\cap \{x_2=0\}$;
\item $\sigma_2=\g(j)$ is an arc in $ F(j)$ with end points in $\partial F(j)$, and such that \\
 $\g(j)\cap M(\omega_1 r)=\g(j)\cap C(\omega_1 r,  \pi (N+1) r)  =\Delta(j)\cap \{x_2=0\}$;
\item $\beta$ is an arc in
$\partial M(\omega_2 r)\cap \partial C(\omega_2 r,1)$.
\een
\end{proposition}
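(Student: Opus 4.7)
The plan is to build, via two successive pigeonhole arguments, a collection of $m+1$ indices in $\{1,\dots,2N\}$ satisfying the hypotheses of Property~\ref{property4}, and then to invoke that property directly. Split $\{1,\dots,2N\}=I_\partial\sqcup I_\emptyset$, where $I_\partial=\{i:F(i)\cap \partial M\neq \emptyset\}$, and refine each set by the (alternating) orientation of the mean curvature vector of $\Delta_i$, writing $I_*^{\mathrm{up}}$ and $I_*^{\mathrm{down}}$. For $i\in I_\emptyset^{\mathrm{up}}$, Property~\ref{property2}(1) gives either $F(i)=F(i+1)$ or $i+1\in I_\partial$, and since the orientation alternates, the latter forces $i+1\in I_\partial^{\mathrm{down}}$. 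Thus $i\mapsto i+1$ injects $\{i\in I_\emptyset^{\mathrm{up}}:F(i)\neq F(i+1)\}$ into $I_\partial^{\mathrm{down}}$, while Property~\ref{property3} bounds the excluded set by $m-1$; hence $|I_\emptyset^{\mathrm{up}}|\leq |I_\partial^{\mathrm{down}}|+(m-1)$. The mirror argument using Property~\ref{property2}(2) gives $|I_\emptyset^{\mathrm{down}}|\leq |I_\partial^{\mathrm{up}}|+(m-1)$. Adding and using $|I_\partial|+|I_\emptyset|=2N$ yields $|I_\partial|\geq N-m+1$.

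After possibly swapping the roles of up and down, I may assume $|I_\partial^{\mathrm{up}}|\geq (N-m+1)/2$. A second pigeonhole, now over the at most $m$ components of $\partial M$, produces a single boundary component $\nu$ and a subset $J\subset I_\partial^{\mathrm{up}}$ with $|J|\geq (N-m+1)/(2m)$ such that $F(i)\cap \nu\neq \emptyset$ for every $i\in J$. The elements of $J$ are upward indices, hence share a common parity, and are spaced at least $2$ apart; in particular any interval of length $2\widetilde{N}+2$ contains at most $\widetilde{N}+1$ members of $J$. A standard greedy selection (smallest element, then smallest element at distance $\geq 2\widetilde{N}+2$, and so on) extracts a subsequence of length at least $\lceil |J|/(\widetilde{N}+1)\rceil$ whose consecutive terms differ by at least $2\widetilde{N}+2$. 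To force this length to reach $m+1$ it suffices to verify $(N-m+1)/(2m)\geq (m+1)(\widetilde{N}+1)$; the hypothesis $2N\geq m(2m+2)(m+2\widetilde{N}+2)=2m^2(m+1)+4m(m+1)(\widetilde{N}+1)$ does this with room to spare (the extra summand $2m^2(m+1)$ easily absorbs the $2m-2$ slack for $m\geq 1$).

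Denote the resulting indices by $i(1)<i(2)<\cdots<i(m+1)$. They lie in $I_\partial^{\mathrm{up}}$, every $F(i(k))$ meets $\nu$, and consecutive gaps are at least $2\widetilde{N}+2$, so Property~\ref{property4} applies and yields the arc family $\Gamma=\{\gamma(1),\dots,\gamma(m+1)\}$ and a compact disk $D\subset M$ whose boundary decomposes into $\alpha\cup \sigma_1\cup \sigma_2\cup \beta$, with $\alpha\subset \nu\subset \partial\B(\ve)$, $\sigma_1,\sigma_2$ two of the arcs of $\Gamma$ (say $\sigma_1=\gamma(k_1)$ and $\sigma_2=\gamma(k_2)$ with $k_1<k_2$), and $\beta$ an arc in $\partial M(\omega_2 r)\cap \partial C(\omega_2 r,1)$; item~(3) of Property~\ref{property4} then furnishes the required $\widetilde{N}$-valued graph $u^+$ inside $D\cap (M(\omega_1 r)-\Int(M(\omega_2 r)))$. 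Setting $i=i(k_2)$ and $j=i(k_1)$ gives $i-j\geq 2\widetilde{N}+2$, matching the proposition exactly. The main obstacle is the bookkeeping that produces $|I_\partial|\geq N-m+1$: one must carefully use the orientation alternation to land the injection in the oppositely-oriented half $I_\partial^{\mathrm{down}}$ (not merely in $I_\partial$), and then exploit the common parity of $J$ to gain the factor $\widetilde{N}+1$ rather than $2\widetilde{N}+2$ at the greedy step. These two refinements are precisely what match the explicit constant appearing in the hypothesis.
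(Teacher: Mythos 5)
Your proposal is correct and terminates exactly where the paper's proof does: with $m+1$ indices whose domains $F(i(k))$ all meet a common boundary component $\nu$, whose disks $\Delta(i(k))$ share an orientation, and whose consecutive gaps are at least $2\widetilde{N}+2$, to which Property~\ref{property4} is then applied verbatim. The difference lies entirely in how those indices are extracted. The paper partitions $\{1,\dots,2N\}$ into $m(2m+2)$ blocks of $m$ consecutive indices separated by gaps of at least $2\widetilde{N}+2$, asserts from Properties~\ref{property2} and~\ref{property3} that each block contains a boundary-touching index, and then pigeonholes twice (first over the $m$ boundary components, then over the two orientations); the gap condition is built into the block structure from the start. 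You instead prove a global lower bound $|I_\partial|\geq N-m+1$ via the injections $i\mapsto i\pm 1$ into the oppositely oriented boundary-touching set, pigeonhole in the reverse order, and enforce the gaps only at the end by a greedy selection that exploits the common parity of same-orientation indices. Your global counting step is, if anything, more carefully justified than the paper's per-block assertion, which is stated without proof and is delicate because Property~\ref{property2} only forces $F(j)=F(j+1)$ for every other consecutive pair inside a boundary-avoiding block; your injection argument sidesteps that issue. Two trivial caveats: the map $i\mapsto i+1$ is undefined at $i=2N$ (and $i\mapsto i-1$ at $i=1$), so your inequality should be weakened to $|I_\partial|\geq N-m$; and one should note explicitly that the pigeonhole and greedy constants still close with this weaker bound. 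Both are absorbed by the slack you already identified in the hypothesis $2N\geq m(2m+2)(m+2\widetilde{N}+2)$, so the argument stands.
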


\begin{proof}
Since
$\cF=\{F(1), \ldots,F(m(2m+2)(m + 2\widetilde{N} +2)), \ldots, F(2N)\}$, then
for $l\in \{1,2,\ldots, m(2m+2) \}$, the family of
domains
\[
T_l = \{ F(i) \mid i \in \{ (l-1)(m +2\widetilde{N} +2)+1, \ldots,
(l-1)(m +2\widetilde{N} +2) +m \}\}
\]
is a well-defined subset of $\cF$, each $T_l$ consists
of $m$, not necessarily distinct, indexed elements,
and if $F(i)\in T_{l+1}$ and $F(j)\in T_l$, then $i-j\geq 2\wt{N}+2$.
Properties~\ref{property2} and
~\ref{property3} imply that there exists an element $F(f(l))\in T_l$
such that $F(f(l)) \cap \partial M \neq \O$. Thus, the collection
$$\{ F(f(1)), F(f(2)), \ldots, F(f(m(2m+2)))\}$$ has $m(2m+2)$ indexed
elements  and for each $l\in\{1,2,\ldots, m(2m+2)-1 \}$, $f(l+1)-f(l)\geq 2\wt{N}+2$.

Since $M$ has $m$ boundary components, then there exists an ordered
subcollection $$\{F(i(1)), F(i(2)), \ldots, F(i(2m+2))\}, $$ with
each element in this ordered subcollection intersecting some particular
component $\nu$ of $\partial M$. Therefore, there exists a further ordered subcollection
$$\{F(k(1)), F(k(2)), \ldots, F(k(m+1))\} $$ for which the mean curvature vectors
of the disks $\Delta(k(1)),\, \Delta(k(2)),
\, \ldots , \Delta(k(m+1))$ are all upward pointing or all downward pointing.
 By construction, for each
$j\in \{1, 2, \ldots, m+1\}$, $k(j+1)-k(j)\geq 2\widetilde{N}+2$.

Proposition~\ref{disk} now follows from
Property~\ref{property4} applied to the collection
$$\{F(k(1)), F(k(2)), \ldots, F(k(m+1))\}. $$
\par \vspace{-.2cm} \end{proof}

Recall that $X_M$ is the closure of the
connected mean convex region of
$\ov{\B}(\ve)-M$. The next lemma is an immediate consequence of the main theorem in~\cite{my2}.
It says that because the domain $X_M$ is mean convex,
it is possible to find a stable embedded minimal disk in $X_M$ with the
same boundary as $D$, where $D$ denotes the disk given in
Proposition~\ref{disk}.

\begin{lemma} \label{sdisk}  Let $D$ denote the disk given in
Proposition~\ref{disk}. Then there is a stable
minimal disk $E$ embedded  in $X_M$ with $\partial E =\partial D$.
\end{lemma}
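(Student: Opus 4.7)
The plan is to apply the embedded least-area disk theorem of Meeks--Yau~\cite{my2} directly to the compact mean convex region $X_M$ with boundary curve $\partial D$, and then to observe that a least-area disk is automatically stable. This reduces the lemma to checking two points: (a) that $X_M$ is mean convex in the sense required by~\cite{my2}, and (b) that $\partial D$ is a piecewise-smooth Jordan curve in $\partial X_M$ that is null-homotopic in $X_M$.

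For (a), I would note that by Definition~\ref{defXM}, $X_M$ is the closure of the component of $\ov{\B}(\ve)-M$ on which the mean curvature vector of $M$ points inward; hence the portion of $\partial X_M$ lying in $M$ is mean convex toward $X_M$. The remaining portion of $\partial X_M$ lies in $\partial \B(\ve)$, whose mean curvature vector (for the round sphere bounding $\B(\ve)$) points into $\B(\ve)$ and therefore into $X_M \subset \ov{\B}(\ve)$. So $\partial X_M$ is piecewise smooth and mean convex everywhere it is smooth.

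For (b), Proposition~\ref{disk} describes $\partial D$ as the cyclic concatenation of the four pairwise disjoint arcs $\alpha\subset\partial M \subset \partial\B(\ve)$, $\sigma_1\subset F(i)\subset M$, $\sigma_2\subset F(j)\subset M$, and $\beta\subset M$, all contained in $\partial X_M$; thus $\partial D$ is a piecewise-smooth Jordan curve in $\partial X_M$. Since the disk $D$ itself sits in $M\subset \partial X_M$, it exhibits $\partial D$ as null-homotopic in $X_M$. Once (a) and (b) are in hand, the Meeks--Yau theorem yields an embedded least-area disk $E\subset X_M$ with $\partial E = \partial D$, and any area-minimizing embedded minimal disk is stable.

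The only real obstacle is that $\partial X_M$ fails to be $C^2$ along the corner curve $M\cap \partial\B(\ve)$ (and, in \textbf{Case $\cB$}, along certain interior curves where two sheets of $M$ bound a common component of $X_M$). At each such corner, however, both faces are mean convex toward $X_M$, so the standard remedy applies: one either invokes the piecewise-smooth version of~\cite{my2}, or slightly enlarges $X_M$ to a smooth mean convex domain $\widetilde{X}_M\supset X_M$ (for instance, by pushing $\partial\B(\ve)$ outward by a small round-sphere homothety and smoothing the corners), solves the Plateau problem there to get a minimal disk $E$, and then uses the maximum principle against the mean convex barriers to conclude that $E\subset X_M$ and $\partial E=\partial D$, as required.
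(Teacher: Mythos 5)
Your proposal is correct and follows essentially the same route as the paper, which proves the lemma simply by citing the main theorem of Meeks--Yau~\cite{my2} applied to the mean convex region $X_M$ and the Jordan curve $\partial D\subset\partial X_M$ bounded by the disk $D\subset M\subset\partial X_M$. Your additional verifications (mean convexity of both the $M$-portion and the spherical portion of $\partial X_M$, null-homotopy of $\partial D$ via $D$ itself, and the handling of corners) are exactly the details the paper leaves implicit.
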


In what follows we   show that $E$ contains a highly-sheeted multi-valued graph
on a small scale near the origin and that some of the sheets
of this multi-valued graph extend horizontally on a scale proportional to
$\ve$ when $\wt{N}$ is sufficiently large, where $\wt{N}$ is
described in Proposition~\ref{disk}. To do
this we   need to apply two results of Colding and Minicozzi.
The first result is a  gradient estimate for
certain stable minimal surfaces in thin slabs, which follows  from an application of the
curvature estimates by Schoen~\cite{sc3} for stable orientable
minimal surfaces.

\begin{lemma}[Lemma I.0.9. in~\cite{cm21}]\label{gradest}
Let $\Gamma\subset\{|x_3|\leq \beta h\}$ be a compact stable embedded
minimal surface and let $T_h(\Pi(\partial \Gamma))\subset \R^2$
denote the regular $h$-neighborhood of the projection $\Pi(\partial
\Gamma)$ of $\partial
\Gamma$ to $\R^2$, where $\Pi\colon \G \to \R^2$ is orthogonal
projection to the $(x_1,x_2)$-plane. There exist $C_g,\beta_s>0$ so that if
$\beta\leq\beta_s$ and $F$ is a component of
$$\mathbb{R}^2 - T_h(\Pi(\partial \Gamma)),$$ then each component of
$\Pi^{-1}(F)\cap\Gamma$ is a graph over $F$ of a function $u$ with
$$|\nabla_{\R^2}u|\leq C_g\beta. $$
\end{lemma}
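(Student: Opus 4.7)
The plan is to deduce the lemma from Schoen's interior curvature estimate for stable orientable minimal surfaces~\cite{sc3} together with the thin-slab hypothesis. The first observation is a distance bound: for any $p \in \Gamma$ with $\Pi(p) \in F$ and any $q \in \partial\Gamma$, the horizontal separation satisfies $|\Pi(p) - \Pi(q)| \geq h$ because $F$ is disjoint from $T_h(\Pi(\partial\Gamma))$. Hence $|p-q| \geq |\Pi(p)-\Pi(q)| \geq h$, so every point of $\Pi^{-1}(F) \cap \Gamma$ lies at Euclidean distance at least $h$ from $\partial \Gamma$. Schoen's curvature estimate then furnishes a universal constant $C_s$ with $|A_\Gamma|(p) \leq C_s/h$ at all such $p$.

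After rescaling by $h^{-1}$, it suffices to establish the following scale-invariant statement: there exist universal constants $\beta_s, C_g > 0$ such that every component $\widetilde{\Gamma}'$ of a stable embedded minimal surface contained in the slab $\{|x_3|\leq\beta\}$, with $\beta\leq\beta_s$ and $|A|\leq C_s$, is graphical over its horizontal projection with slope at most $C_g\beta$. To prove this, I would exploit the vertical component $\nu_3 = \langle\nu,e_3\rangle$ of the unit normal. Minimality forces $\nu_3$ to satisfy the Jacobi equation $\Delta_\Gamma \nu_3 + |A|^2 \nu_3 = 0$, and, whenever $\widetilde{\Gamma}'$ is a graph of a function $u$, one has $|\nabla_{\mathbb{R}^2} u|^2 = (1-\nu_3^2)/\nu_3^2$. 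Thus the desired gradient bound reduces to the quantitative closeness $1-\nu_3^2 \leq C\beta^2$ together with $|\nu_3|$ being bounded away from zero.

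The cleanest route to the closeness bound is a compactness-contradiction argument. Assuming it fails along a sequence $\beta_n \to 0$, the uniform curvature bound $|A|\leq C_s$ allows one to extract a smooth subsequential limit surface. The collapsing slab constraint forces this limit to lie in a horizontal plane, and stability forces it to \emph{be} a plane (since it is complete stable minimal with bounded curvature). Hence its unit normal is $\pm e_3$, and smooth convergence of normals drives $1-\nu_3^2\to 0$ uniformly on compact subsets of the sequence. After rescaling the height function $u_n/\beta_n$ to have sup-norm at most $1$ and applying standard interior gradient estimates for the quasilinear minimal surface equation (alternatively, an energy estimate for the Jacobi equation against a cutoff), one upgrades this qualitative convergence to the quantitative rate $O(\beta_n^2)$, contradicting the assumed failure. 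The constants $C_g$ and $\beta_s$ produced this way are universal, and, choosing $\beta_s$ small enough guarantees $|\nu_3|\geq \tfrac{1}{2}$, which in turn gives the required graphicality.

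The main obstacle is precisely this passage from qualitative planar convergence to the quantitative scaling $1-\nu_3^2 \lesssim \beta^2$. A direct pointwise maximum principle for $\nu_3$ is not available, since $\nu_3$ need not attain its extrema in the interior of the component, and one lacks control on $\nu_3$ along $\partial\widetilde{\Gamma}'$. The resolution, in my plan above, is either to set up the compactness argument carefully with a suitable normalization that picks up the correct $\beta^2$ scale, or, once graphicality of the component is known, to appeal directly to interior gradient estimates for the minimal surface equation applied to the rescaled graph function $u/\beta$.
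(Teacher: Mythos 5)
The paper does not actually prove this statement: it is imported verbatim as Lemma~I.0.9 of~\cite{cm21}, with only the one-line remark that it ``follows from an application of the curvature estimates by Schoen''. So the comparison here is really with Colding--Minicozzi's original argument, and your skeleton --- horizontal distance from $F$ to $\Pi(\partial\Gamma)$ gives intrinsic distance $\geq h$ to $\partial\Gamma$, Schoen gives $|A|\leq C_s/h$, rescale to unit scale, then convert the slab bound into a gradient bound --- is the intended one. Your first paragraph is correct, and your closing observation is also the right one: the linear rate $|\nabla u|\leq C_g\beta$ does not come from the crude ``tilted tangent plane exits the slab'' argument (which only yields $O(\sqrt{\beta})$ because of the quadratic error in the local graph over the tangent plane); it comes from the interior gradient estimate for the (now uniformly elliptic) minimal surface equation, $\sup_{B_{1/2}}|\nabla u|\leq C\,\mathrm{osc}_{B_1}u\leq 2C\beta$, applied on unit-scale disks centered over points of $F$. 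Once you have that, the compactness--contradiction detour is unnecessary, and as written it is also imprecise: after rescaling by $h^{-1}$ the points of interest are only at distance $\geq 1$ from $\partial\Gamma$, so the limit object is not a complete surface (and without area bounds it is a priori a lamination, not a surface); fortunately containment in the collapsing slab already forces the limit leaves to be flat without invoking stability of the limit.

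The one genuine gap is the claim that $|\nu_3|\geq\tfrac12$ ``gives the required graphicality.'' Non-vertical tangent planes only make $\Pi$ restricted to a component $\Gamma'$ of $\Pi^{-1}(F)\cap\Gamma$ a proper local diffeomorphism onto $F$, i.e.\ a covering map. Since $F$ is a component of $\R^2-T_h(\Pi(\partial\Gamma))$ it need not be simply connected, so a priori $\Gamma'$ could be a connected $k$-sheeted cover with $k\geq 2$ --- a multivalued graph --- and the conclusion of the lemma would fail. To close this you must use embeddedness: over each point of $F$ the $k$ sheets are linearly ordered by height, analytic continuation around a loop in $F$ preserves this vertical order (the sheets cannot cross), and the monodromy of a connected $k$-sheeted cover is a $k$-cycle; the only order-preserving cyclic permutation is the identity, so $k=1$ and $\Gamma'$ is a single-valued graph over all of $F$. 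With that addition (and with the interior elliptic estimate doing the quantitative work in place of the compactness step), your argument is complete and consistent with the cited source.
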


The second result needed is a scaled version of Theorem II.0.21
in~\cite{cm21} that gives conditions under which an embedded
stable minimal disk contains a large multi-valued graph.

\begin{theorem}[Theorem II.0.21 in~\cite{cm21}]\label{2vgminimal}
Given $\tau>0$, there exist $N_1,\Omega_1>1$ and $\ve_1>0$ such that the following holds.

Given $\delta \in (0,1)$, let $\Sigma\subset \B(R_0)$ be a stable embedded
minimal disk with $\partial \Sigma \subset \B(\de r_0)\cup \partial \B({R_0})\cup \{x_1=0\}$
where $\partial \Sigma - \partial \B(R_0)$ is connected.
Suppose the following hold:
\ben[1.]\item $\Omega_1 r_0<1< \frac{R_0}{ \delta\Omega_1}$.
\item $\Sigma$ contains an $N_1$-valued graph $\Sigma_g$
over $A(\de,\de r_0)$ with norm of the gradient is less than or equal to $ \ve_1$.
\item $\Pi^{-1}(\{(x_1,x_2,0)\mid x_1^2 +x_2^2 \leq (\de r_0)^2\})
\cap \Sigma_g^M\subset \{|x_3|\leq \ve_1 \de r_0\};$
here $\Sigma_g^M$ denotes the middle sheet of $\Sigma_g$.
\item An arc $\widetilde \eta$ connects
$\Sigma_g$ to $\partial \Sigma \backslash \partial \B(R_0)$,
where $\widetilde\eta\subset \Pi^{-1}(D_{\delta r_0})\cap [\Sigma - \partial \B(R_0)] $.
\een
Then $\Sigma$ contains a 10-valued graph $\Sigma_d$ over
$A( R_0\slash\Omega_1, \de r_0)$ with norm of the gradient less than or equal to $ \tau$.
\end{theorem}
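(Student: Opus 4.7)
My plan is to propagate the small $N_1$-valued graph $\Sigma_g$ outward across the much larger annulus $A(R_0/\Omega_1,\de r_0)$ by combining the stability hypothesis with the gradient estimate in thin slabs. Since $\Sigma$ is a stable embedded minimal disk, Schoen's curvature estimates give $|A_\Sigma|(x)\leq C/\mathrm{dist}(x,\partial \Sigma)$ at interior points, and hypothesis~1 ensures that $R_0/(\de \Omega_1)$ is large enough that these bounds are genuinely useful on the target annulus. Combined with embeddedness, this rules out curvature blow-up and self-intersection along the extension.

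The core step is to extend the middle sheet $\Sigma_g^M$ from its base annulus $A(\de,\de r_0)$ to the whole of $A(R_0/\Omega_1,\de r_0)$ as an almost horizontal graph. By hypothesis~3, $\Sigma_g^M$ starts inside the thin slab $\{|x_3|\le \ve_1 \de r_0\}$, so it is graphical with very small gradient over its base. I would run a continuation argument, enlarging the base annulus step by step: at each step the stability curvature bound plus embeddedness keep the newly added piece from spiralling or self-intersecting, while Lemma~\ref{gradest} keeps it graphical with small gradient provided it remains in a sufficiently thin slab. The arc $\widetilde\eta$ is used precisely to identify, at each stage, which component of $\Sigma \cap \Pi^{-1}(\text{base annulus})$ is the extension of the middle sheet, so that the continuation tracks the correct piece of the disk and does not hop onto a disjoint component.

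Having extended the middle sheet as a nearly flat graph over $A(R_0/\Omega_1,\de r_0)$, I next propagate the multi-valued structure sheet by sheet above and below it. Consecutive sheets of $\Sigma_g$ are separated by small vertical gaps and have bounded gradient, so each adjacent sheet starts graphical and sits inside only a slightly thicker slab; the same stability-plus-slab argument then extends each of them to the full annulus. Some sheets will be discarded along the way (either because they run into $\partial\Sigma$ or because they leave the slab in which Lemma~\ref{gradest} applies), which is the reason the statement gives a $10$-valued conclusion rather than an $N_1$-valued one. The constants $\Omega_1,\ve_1,N_1$ are then chosen so that after this loss the surviving $10$-valued graph has gradient bounded by the prescribed $\tau$.

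The main obstacle is exactly this sheet loss and its quantitative control. As the extension is pushed out to a scale much larger than $\de r_0$, the natural slab thickness in Lemma~\ref{gradest} grows and the outermost sheets can drift out of the region where the thin-slab gradient estimate still applies; embeddedness prevents sheets from crossing but not from leaving the slab or hitting $\partial\Sigma$. One must therefore choose $N_1\gg 10$ to guarantee a redundancy of sheets around the middle one, $\ve_1$ small enough that the middle sheet stays nearly horizontal across the entire extension, and $\Omega_1$ large enough both to absorb the loss and to match the thin-slab regime on the target annulus. Calibrating these three constants simultaneously, so that the continuation argument closes up and produces exactly $10$ surviving sheets with gradient $\le\tau$, is the delicate interlocking step that I expect will require the most care.
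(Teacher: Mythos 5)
The paper does not prove this statement: it is imported, after a rescaling by the factor $\delta$, directly from Colding and Minicozzi (Theorem~II.0.21 in~\cite{cm21}), and the only accompanying commentary is the remark that the $2$-valued hypothesis of the original can be upgraded to the $10$-valued version stated here. So your proposal cannot be compared against an argument in this paper; it has to be judged against the Colding--Minicozzi proof itself.

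Judged that way, your outline reproduces the correct general framework (stability, Schoen's curvature estimate, the thin-slab gradient estimate of Lemma~\ref{gradest}, a continuation argument over growing annuli, and the role of $\widetilde\eta$ in selecting the correct component), but it is missing the one idea that makes the continuation close up: a quantitative bound on the vertical drift and sheet separation of the extended graph. You assert that Lemma~\ref{gradest} keeps the extension graphical ``provided it remains in a sufficiently thin slab,'' and that choosing $\ve_1$ small keeps the middle sheet nearly horizontal across the entire extension --- but nothing you invoke forces this. Stability and embeddedness prevent crossings and interior curvature blow-up, but they do not prevent the extended sheet from tilting and climbing out of every fixed slab after finitely many steps, at which point the hypothesis of Lemma~\ref{gradest} fails and the induction stops. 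In~\cite{cm21} this is controlled by studying the separation $w(\rho,\theta)=u(\rho,\theta+2\pi)-u(\rho,\theta)$ between consecutive sheets: $w$ is a positive solution of a uniformly elliptic second-order equation (being a difference of solutions of the minimal surface equation), a Harnack inequality controls its growth in $\rho$, and the presence of many sheets ($N_1$ large) pins the total vertical extent of the inner sheets to a small multiple of $\rho$, which is exactly what keeps the extension inside the shallow cone where the gradient estimate applies and what determines how many sheets survive out to radius $R_0/\Omega_1$. Without some substitute for this separation estimate, the ``delicate interlocking step'' you defer at the end is not a calibration of constants --- it is the substance of the theorem.
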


\begin{remark}{\em
This version of Theorem II.0.21 in~\cite{cm21} is
obtained by scaling $\S_g$ by the factor $\delta$.
While in the statement of
Theorem~II.0.21 in~\cite{cm21}, $\Sigma_g$ is said to contain a 2-valued graph, the result
above where $\Sigma_g$ is said to contain a 10-valued graph also holds.
}\end{remark}

In reading the next
statement, recall that $M$ and $r$ are elements of a sequence
that depends on $n$ and that for
convenience we have omitted the index $n$. Among other
things the next theorem states that the minimal disk $E$ in
Lemma~\ref{sdisk} contains a 10-valued
graph on a fixed horizontal scale (when $n$ is sufficiently large).

\begin{theorem}\label{extmvg}
Given $\tau\in(0,\frac12)$ there exists an $\Omega_1=\Omega_1(\tau)>1$,
$\omega_2=\omega_2(\tau)$ and
$ \omega_1=  \omega_1(\tau)>10\omega_2$ such that
the following is true of $M$ when $n$ is sufficiently large.
\ben[1.]\item There exists a minimal subdisk $E'\subset E$
containing a 10-valued  graph $E'_g$ over $A(\frac{\ve}{\Omega_1},4\omega_2r)$
with norm of the gradient less than $\tau$.
\item The intersection  $C_{\omega_1r} \cap M\cap W[E'_g]$
consists of exactly two or four 9-valued graphs with norms of the gradients less
than $\frac{\tau}{2}$. (See Definition~\ref{def:W} for the definition of $W[E'_g]$.)
\item There exist  constants $\beta_1,\beta_2\in (0,1]$ such that if
$\tau<\frac{\beta_2}{1000}$, then for any  $p=(x_1,x_2,0)$
such that $x_1^2+x_2^2=(\frac{\ve}{10\Omega_1(\tau)})^2$
the following holds. The intersection set
\[
C_{\beta_1\alpha}(p)\cap M\cap W[E'_g]
\]
with $\alpha=\frac{80\ve\tau}{\Omega_1(\tau)\beta_2}$  is
non-empty and it consists of at least eight disconnected components.
\een
\end{theorem}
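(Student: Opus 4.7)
The plan is to obtain items~1,~2,~3 in sequence by applying Colding--Minicozzi's Theorem~\ref{2vgminimal} to the stable minimal disk $E$ from Lemma~\ref{sdisk}, and then transferring the resulting multi-valued structure to $M$ via a mean-convex sandwich argument.

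For item~1, the first step is to promote the disk $E$ from Lemma~\ref{sdisk} to a minimal disk containing an $\wt N$-valued graph over $A(\omega_1 r,\omega_2 r)$ with norm of the gradient less than $\ve_1(\tau)$, the constant from Theorem~\ref{2vgminimal}. The arcs $\sigma_1,\sigma_2$ of $\partial E=\alpha\cup\sigma_1\cup\sigma_2\cup\beta$ thread vertically through $M(\omega_1 r)-\Int M(\omega_2 r)$ between helicoidal sheets of $M$ that are separated by at most $6\pi r$ (Description~\ref{description}, item~5), while $\beta\subset\partial C(\omega_2 r,1)$ lies in a thin cylinder near the origin and $\alpha\subset\partial \B(\ve)$. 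Because $E\subset X_M$ and $X_M\cap\{(r_0<\sqrt{x_1^2+x_2^2}<\omega_1 r)\}$ is pinched into a union of thin horizontal sub-slabs bounded by consecutive sheets of $M$, Lemma~\ref{gradest} forces $E$ to consist of almost-horizontal graphs in each such sub-slab. Bookkeeping of the $\wt N$-sheeted boundary data of $\partial E$ (provided by item~3 of Proposition~\ref{disk}) then produces an $\wt N$-valued minimal graph inside $E$ over $A(\omega_1 r,\omega_2 r)$. Choosing $\wt N\ge N_1(\tau)$ from Theorem~\ref{2vgminimal} in Proposition~\ref{disk}, and setting $R_0=\ve$, $\delta r_0=4\omega_2 r$, $\delta=\omega_1 r$, with $\omega_1>10\omega_2$ and $\omega_1,\omega_2$ both large enough that $\Omega_1(\tau)r_0<1<R_0/(\delta\Omega_1(\tau))$, all four hypotheses of Theorem~\ref{2vgminimal} hold, and its conclusion provides the desired 10-valued graph $E'_g\subset E'\subset E$ over $A(\ve/\Omega_1,4\omega_2 r)$ with gradient at most $\tau$.

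For item~2, once $E'_g$ is in place, $W[E'_g]$ is a thin helicoidal slab decomposed into $9$ vertical sub-slabs by consecutive sheets of $E'_g$. Since $E\subset X_M$ and the mean curvature vector of $M$ points into $X_M$, the two (in Case~${\mathcal A}$) or four (in Case~${\mathcal B}$) oppositely oriented $N$-valued graphs of $M$ from Description~\ref{description}.4 cannot cross $E'_g$ and must therefore fit into these $9$ sub-slabs. A sandwich argument using $E'_g$ as a barrier, together with Lemma~\ref{gradest} applied to the $H=1$ surface $M$ within each sub-slab (where on this scale $H=1$ perturbs the minimal gradient estimate by a factor controlled by $\ve$), produces that each of these multi-valued graphs of $M$ extends across $C_{\omega_1 r}\cap W[E'_g]$ as a graph of gradient at most $\tau/2$, losing one sheet from each of the top and bottom of the ambient 10-valued graph $E'_g$. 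This yields exactly two or four 9-valued graphs of $M$ inside $C_{\omega_1 r}\cap M\cap W[E'_g]$, as claimed.

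For item~3, fix $p=(x_1,x_2,0)$ with $x_1^2+x_2^2=(\ve/(10\Omega_1))^2$. Because $E'_g$ is a small-gradient perturbation of a rescaled helicoid, the vertical separation at $p$ between consecutive sheets of $E'_g$ is bounded below by $\beta_2\cdot\ve/\Omega_1$ for a constant $\beta_2\in(0,1]$ depending only on the limit helicoid, and from above by a constant multiple of $\ve\tau/\Omega_1$. The choice $\alpha=80\ve\tau/(\Omega_1\beta_2)$ combined with $\tau<\beta_2/1000$ then guarantees that the vertical segment $\{p+(0,0,x_3):|x_3|\le\alpha\}$ crosses at least five consecutive sheets of $E'_g$, hence meets at least four consecutive sub-slabs of $W[E'_g]$ above $p$. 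By item~2, each such sub-slab contains at least two sheets of $M$ (Case~${\mathcal A}$), or four (Case~${\mathcal B}$), so selecting $\beta_1\in(0,1]$ small enough that the solid cylinder $C_{\beta_1\alpha}(p)$ meets each such sheet of $M$ in a single graph component yields at least $4\cdot 2=8$ disconnected components of $C_{\beta_1\alpha}(p)\cap M\cap W[E'_g]$, and doubles in Case~${\mathcal B}$. The main obstacle will be item~1: while $\partial D=\partial E$, the interior of the abstract stable minimal disk $E$ need not a priori inherit $D$'s helicoidal sheet structure, so one must leverage the mean-convex barrier $\partial X_M$ together with the thin-slab estimate Lemma~\ref{gradest} to force the required multi-valued graph of small gradient into $E$ before Theorem~\ref{2vgminimal} can be applied; interleaving this with the compatible choice of all scale parameters $\omega_1,\omega_2,\beta_1,\beta_2$ is the most delicate part of the argument.
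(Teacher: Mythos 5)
There are two genuine gaps here, one in your treatment of item~1 and a more serious one in item~3.

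For item~1, you cannot apply Theorem~\ref{2vgminimal} to $E$ itself: its boundary hypothesis requires $\partial \Sigma\subset \B(\de r_0)\cup\partial\B(R_0)\cup\{x_1=0\}$ with $\partial\Sigma-\partial\B(R_0)$ connected, whereas $\partial E=\a\cup\sigma_1\cup\sigma_2\cup\be$ has the two side arcs $\sigma_1,\sigma_2$ lying in $\{x_1\geq\omega_2 r\}$ and running all the way out to $\partial\B(\ve)$. The paper's proof spends most of its length manufacturing a subdisk $E'$ whose boundary \emph{does} have the required form: it first builds an auxiliary least-area disk $D(\rho)$ (via the Meeks--Yau theorem) spanning a curve $\rho$ in $X_M\cap C_{3\omega_2 r}$ to produce an arc $\eta\subset D(\rho)\cap E$ joining the inner boundary $\xi_i$ to a point $t_e$ where Lemma~\ref{gradest} can be started; it then prolongates to get an $(N_1+2)$-valued graph $E_g$, takes the arcs $\Gamma_\pm\subset\{x_1=0\}\cap E$, and proves (the Claim, via convex-hull contradictions) that $\Gamma_\pm$ avoid $E_g\cup\eta$ and terminate on $\xi_o$, so that $\Gamma_+\cup\gamma_+\cup\nu\cup\gamma_-\cup\Gamma_-\cup\alpha$ cuts out $E'$ with $\partial E'\subset\B(4\omega_2 r)\cup\{x_1=0\}\cup\partial\B(\ve)$. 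The arc $\eta$ is also what verifies hypothesis~4 of Theorem~\ref{2vgminimal}. You acknowledge that transferring the sheet structure into the interior of $E$ is ``the most delicate part,'' but acknowledging the difficulty is not the same as resolving it; as written, the hypotheses of Theorem~\ref{2vgminimal} are simply asserted rather than verified.

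The gap in item~3 is structural. You argue: the cylinder $C_{\beta_1\alpha}(p)$ at radius $|p|=\ve/(10\Omega_1)$ meets several sub-slabs of $W[E'_g]$, and ``by item~2, each such sub-slab contains at least two sheets of $M$.'' But item~2 only locates sheets of $M$ inside $C_{\omega_1 r}$, and $\omega_1 r\to 0$ as $n\to\infty$ while $\ve/(10\Omega_1)$ is a fixed scale; nothing you have said shows that $M$ has any points at all in $W[E'_g]$ at radius $\ve/(10\Omega_1)$. Proving that $M$ propagates outward from the tiny cylinder $C_{\omega_1 r}$ into each of the components $\Delta$ of $(\G\times\R)-E'_g$ out to the fixed scale is the entire content of item~3, and it is exactly where $H=1>0$ is used: the paper constructs embedded compact pieces $\cN_s$ of vertical nodoids of mean curvature $1/s>1$ (Lemma~\ref{lem6.1}), places $\cN_\alpha(p)$ with boundary outside the cone containing $W[E'_g]$, and drags the rescaled family $\cN^t=\cN_{t\alpha}(tp)$ from a small $\ov t$ (where contact with $M\cap\Delta$ is guaranteed because $M$ is graphical in $C_{\omega_1 r}\cap\Delta$) up to $t=1$; the mean curvature comparison principle forbids a last interior point of contact, forcing $M\cap\Delta\cap\cN_\alpha(p)\neq\O$. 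Relatedly, your identification of $\beta_1,\beta_2$ as sheet-separation constants of $E'_g$ is not correct and cannot be: the gradient bound $\tau$ gives only an upper bound on the separation of consecutive sheets, not a lower bound of order $\beta_2\ve/\Omega_1$. In the paper, $\beta_1,\beta_2$ are the radius and half-height of the embedded unit-nodoid piece $\cN_1$, fixed once and for all by Lemma~\ref{lem6.1}.
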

\begin{proof}
Before beginning the actual proof of Theorem~\ref{extmvg},
we summarize some of the results that we have obtained
so far. For simplicity, we suppose that
 Case ${\mathcal A}$ holds.

Recall that  in this case $M_n$ separates $\ov{\B}(\ve)$
into two components and $X_{M_n}$ denotes the closure of the
component with mean convex boundary.
Given $\ve_2\in(0,\frac12)$ and $\wt N \in \N$,
there exist $N\in \N$, $\overline{\omega}>0$
such that for $\omega_1>5\omega_2>\overline{\omega}$ there exist an
$n_0\in \N$ and positive numbers $r_n$, with
$r_n= \frac{\sqrt 2}{|A_{M_n}|(\vec 0)}\to 0$ as $n\to \infty$, such
that for any $n>n_0$ the following statements hold;
see Description~\ref{description} for more details.
Again, for clarity of exposition we abuse  the
notations and we let $M=M_n$ and $r=r_n$.
\ben[1.]\item $M\cap C(\omega_1 r,  \pi (N+1) r)$
consists of the disk component $M(\omega_1 r)$ passing through the origin.
\item $M(\omega_1 r)\cap C(\omega_2 r,  \pi (N+1) r)$ is also a disk.
\item $M(\omega_1 r)\cap [C(\omega_1 r,
\pi (N+1) r)-\Int (C(\omega_2 r,  \pi (N+1) r))]$, that is
\[
M(\omega_1 r)-\Int (M(\omega_2 r)),
\]
contains two oppositely oriented $N$-valued graphs $u_1$ and $u_2$ over
 $A(\omega_1 r, \omega_2 r)$ and
\[
[\overline W[u_1]\cup\overline W[u_2]]\cap M
= {\rm graph}(u_1)\cup {\rm graph}(u_2).
\]

\item The separation between the sheets of the  $N$-valued
graphs $u_1$ and $u_2$ is bounded,  c.f., for
$\rho_1,\rho_2\in [\omega_2r, \omega_1 r],\; |\theta_1-\theta_2|\leq 4\pi$ and $i=1,2$,
\[
|u_i(\rho_1,\theta_1)-u_i(\rho_2,\theta_2)|<6\pi r.
\]
\item $|\nabla u_i|<\ve_2$, $i=1,2$.
\een

Moreover, by Proposition~\ref{disk} and Lemma~\ref{sdisk},
the $N$-valued graph $u_1$ contains an $\wt N$-valued
subgraph graph$(u^+)$ and there exists an embedded stable
minimal disk $E$ disjoint from $M$ whose boundary
$\xi_o\cup\xi_i\cup\sigma_1\cup\sigma_2$ satisfies
the following properties; see  Figure~\ref{summary}:
\bit
\item $\xi_o\subset \partial M\subset \partial \B(\ve)$, the ``outer'' boundary;
\item  $\xi_i\subset \partial M(\omega_2 r)\cap \partial  C_{\omega_2 r}$, the
``inner'' boundary, is an arc in the inner boundary of ${\rm graph}(u^+)$, see
Definition~\ref{multigraph};
\item $\sigma_1\cup\sigma_2\subset \{x_1\geq \omega_2 r \}$, the ``side'' boundaries;
\item  $M(\omega_1r)\cap\sigma_1=\{x_1>0,x_2=0\}\cap u^+[\wt{N}]$
and $M(\omega_1r)\cap\sigma_2=\{x_1>0,x_2=0\}\cap u^+[1]$.
\eit

Recall  that $u^+[k]$ denotes
the $k$-th sheet of the $\wt{N}$-valued graph $u^+$; see
Definition~\ref{def:middle}.
Recall also that $N$ can be taken to be $m(m+1)(m + 2\widetilde{N} +2)$,
where $m$ is the number of boundary components of $M$; see
Proposition~\ref{disk}. Without loss of generality, we may
assume that $\wt{N}$ is odd and the normal vector to $u^+$
is downward pointing and that $u_2$ contains
an $\wt{N}$-valued subgraph $u^-$ satisfying the following properties:
\ben[1.] \item
$[\overline W[u^+]\cup\overline W[u^-]]\cap M= {\rm graph}(u^+)\cup {\rm graph}(u^-).$
\item $u^+_{\rm mid}>u^-_{\rm mid}$.
\een
Furthermore after a small vertical translation of $
M$ by $(2\pi r)y$, for some $y\in [-2(N+2), \ldots 2(N+2)]$, we will assume
$u^+_{\rm mid}$ intersects $\{x_3=0\}$.

\begin{figure}
\begin{center}
\includegraphics[width=11cm]{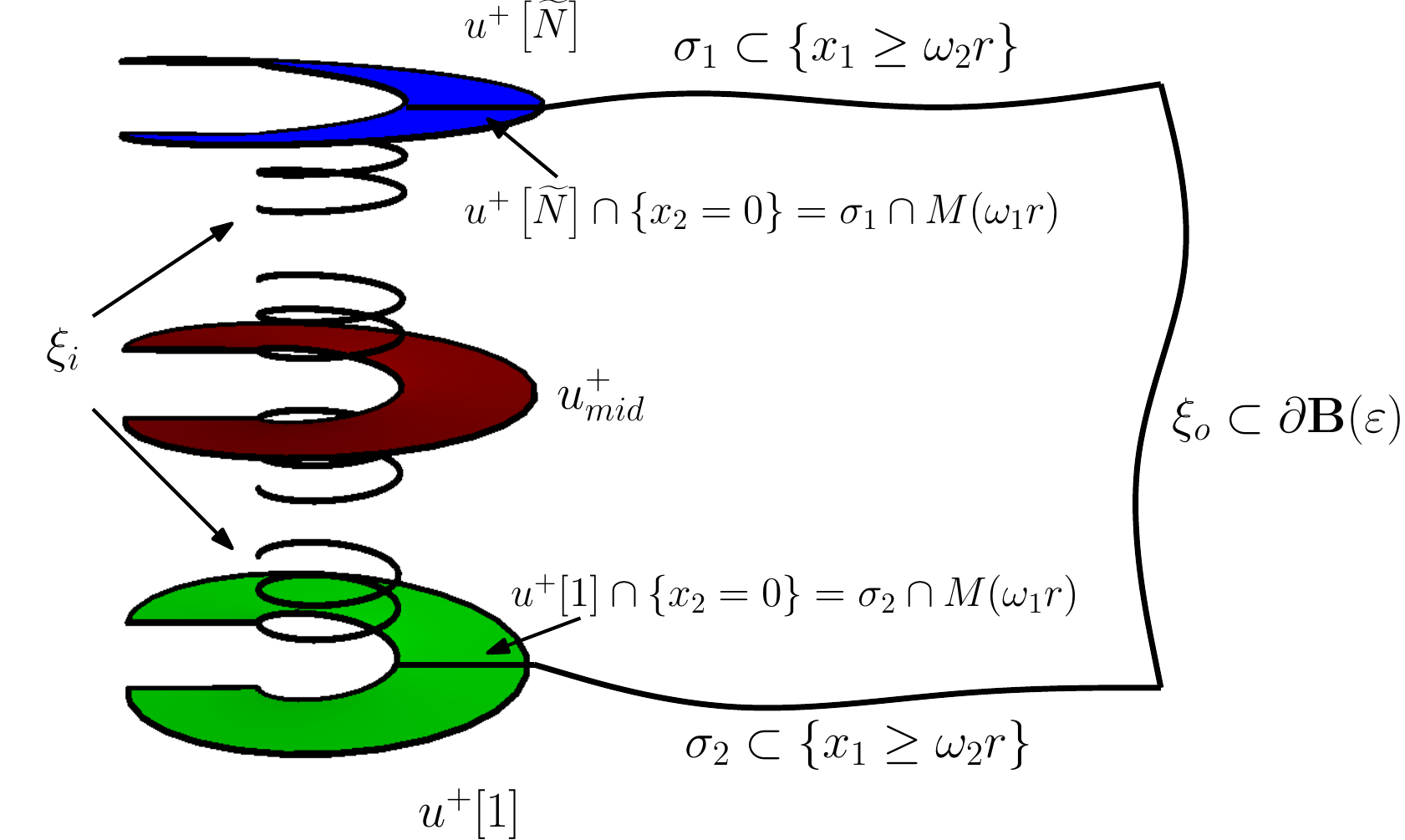}
\caption{}
\label{summary}
\end{center}
\end{figure}

We now begin the actual proof. Let $N_1=N_1(\tau)$, $\ve_1=\ve_1(\tau)$ and
$\Omega_1=\Omega_1(\tau)$ be as given by Theorem~\ref{2vgminimal}. The idea of
the proof is to find $\wt N,\omega_1,\omega_2, \ve_2,r$ in the previous description
such that a subdisk $E'\subset E$ satisfies the hypotheses of
Theorem~\ref{2vgminimal}, and thus extends horizontally
on a fixed scale proportional to $\ve$.

  For the next discussion, refer to Figure~\ref{rho}.
\begin{figure}
\begin{center}
\includegraphics[width=7.5cm]{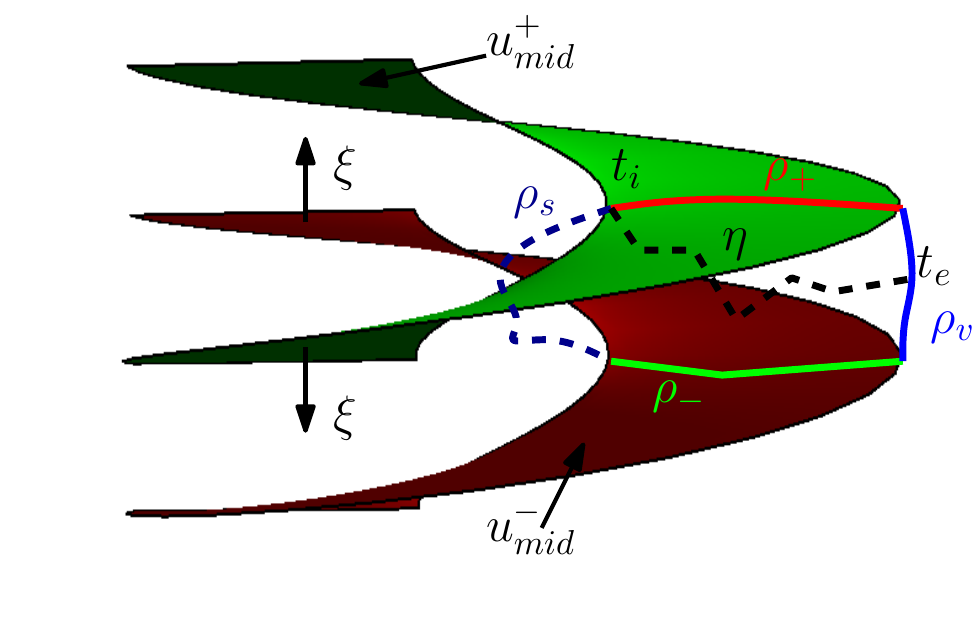}
\caption{}
\label{rho}
\end{center}
\end{figure}
Fix $\wt N=N_1+5$ and consider the simple closed curve
$\rho=\rho_v\cup \rho_+\cup\rho_s\cup\rho_-$ where
  \[
  \rho_\pm\colon [\omega_2r, 3\omega_2r] \to {\rm graph}(u^\pm_{\rm mid}),
  \]
such that for $t\in [\omega_2r, 3\omega_2r]$,
\[
\rho_\pm(t)=\{(t,0,x_3)\mid  x_3\in (-\infty, \infty)\}\cap {\rm graph}(u^\pm_{\rm mid}).
\]
The arc $\rho_v$ is the open vertical line segment
connecting $\rho_+(3\omega_2r)$ and $\rho_-(3\omega_2r)$.
The arc $\rho_s$ is an arc in $M(\omega_2r)$ connecting
$\rho_+(\omega_2r)$ and $\rho_-(\omega_2r)$;
by item~3 in Description~\ref{description}, $\rho_s$ can be chosen
to be contained in the slab
$\{ x_3(\rho_{-} (\omega_2 r))\leq x_3\leq x_3(\rho_{+} (\omega_2 r))\}$
and can be parameterized by its $x_3$-coordinate.
Note that
by Description~\ref{description} and Lemma~\ref{caseB},
$\rho_v\subset  X_M$,  $ \rho_+\cup\rho_s\cup\rho_-\subset \partial X_M$
and  $\rho$ is the boundary of a disk in $X_M\cap C_{3\omega_2 r}$.
(This is also true if {\bf Case ${\mathcal B}$} holds.)
Recall that $C_{3\omega_2 r}$
is the vertical solid cylinder centered at the origin of radius $3\omega_2 r$;
see the definition at the beginning of this section.
The main result
in~\cite{my2} implies that $\rho$ is the boundary of an embedded least-area
disk $D(\rho)$ in $X_M\cap C_{3\omega_2 r}$, which we
may assume is transverse to the disk $E$.  In particular, since by construction
$\rho$ intersects $\partial E$ transversely
in the single point $t_i=u^+(\omega_2r)\in\xi_i$,
and $ \rho_+\cup\rho_s\cup\rho_-\subset \partial X_M$, there
exists an arc $\eta\subset D(\rho)\cap E $ with one end
point $t_i$, the ``interior'' point,  and its other end point
$t_e\in \rho_v\cap E$, the ``exterior'' point.

Consider  the connected
component $\Gamma_{t_e}$ of $C_{2\omega_2r}(t_e)\cap E$
that contains $t_e$; recall that $C_{2\omega_2r}(t_e)$
is the solid vertical cylinder centered at $t_e$ of radius $2\omega_2 r$. Then, $\Gamma_{t_e}$
is contained in a slab of height less then $6\pi r$.
Using Lemma~\ref{gradest}, it follows that $E$ is locally graphical at $t_e$
with norm of the gradient less than $ \frac{6\pi C_g}{\omega_2}$ (take
$\beta h=6\pi r$ and $h=\omega_2 r$). After prolongating this graph, following
the multi-valued graph ${\rm graph}(u^+)$, we find that $E$ contains an $(N_1+2)$-valued
graph $E_g$ over $A(\omega_1 r-\omega_2 r,3\omega_2 r)$ satisfying:
 $$\mbox{the norm of the gradient of $E_g$ is less than } \frac{6\pi C_g}{\omega_2}.$$

The intersection set
\[
\{(0,x_2, x_3)\mid x_2>0, x_3\in(-\infty,\infty)\}\cap E_g
\]
consists of $(N_1+2)$ arcs $\gamma_i$, $i=1, \dots, N_1+2$,
where the order of the indexes agrees with the relative
heights of the arcs. Let $\gamma_+$ denote $\gamma_{N_1+2}$
and let $\gamma_-$ denote $\gamma_{1}$. Let $p_\pm^e$
denote the endpoint of $\gamma_\pm$ in
$\partial E_g\cap\partial C_{\omega_1 r-\omega_2 r}$ and let $p_\pm^i$ denote
the endpoint of $\gamma_\pm$ in
$\partial E_g\cap \partial C_{3\omega_2 r}$.
Without loss of generality, we will assume
that the plane $\{x_1=0\}$ intersects $E$ transversally.
Let $\Gamma_+$ denote the connected arc in $[E-\Int(\gamma_+)]\cap \{x_1=0\}$
containing $p^e_+$ and let $\Gamma_-$ denote the
connected arc in $[E-\Int(\gamma_-)]\cap \{x_1=0\}$ containing $p^e_-$. Since
$\Gamma_+\cup \g_+$ and $\Gamma_-\cup\g_-$ are
planar curves in the minimal disk $E$, neither of these curves can be closed in $E$
by the maximum principle. This implies that
$\partial \Gamma_+=\{p_+^e, q_+\}$ and $q_+\in \partial E$,
$\partial \Gamma_-=\{p_-^e, q_-\}$ and $q_-\in \partial E$.
Since the boundary of $E$ is a subset of
\[
\partial C_{\omega_2 r}\cup \{x_1>\omega_2 r\}\cup \partial\B(\ve)
\] and $\Gamma_\pm\subset \{x_1=0\}$, it follows that
\[
q_\pm\in\partial E \cap [\partial C_{\omega_2 r}\cup \partial\B(\ve)]=\xi_i\cup\xi_o.
\]

\begin{claim}
The interiors of the arcs $\G_\pm$ are disjoint
from $E_g\cup \eta$. Moreover, $q_\pm\in\xi_o$.
\end{claim}
\begin{proof}
We will prove that  $\G_+-\{p_+\}$ is disjoint from
$E_g\cup\eta\cup\xi_i$, from which the claim follows for $\G_+$.
The proof of the case for $\G_-$ is similar and will be left to the reader.

Arguing by contradiction, suppose
$\G_+-\{p_+\}$ is not disjoint from $E_g\cup\eta\cup\xi_i$. Assuming that
$\Gamma_+$ is parameterized beginning at $p_+^e$,
let $r_+\in \Gamma_+ \cap [E_g\cup \eta\cup\xi_i]$
be the first point along $\G_+$ in this intersection set and
let $\Gamma_+'$ be the closed  arc of $\Gamma_+-\{r_+\}$
such that $\partial \Gamma_+'=\{r_+, p_+^e\}$.

\begin{figure}[h]
\begin{center}
\includegraphics[width=11cm]{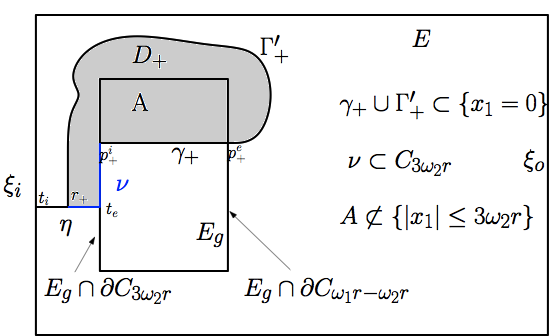}
\caption{}
\label{nu1}
\end{center}
\end{figure}

First suppose $r_+\in \eta$, which is the case described
in Figure~\ref{nu1} . Then there exists an embedded arc
\[
\nu\subset \eta\cup [E_g\cap \partial C_{3\omega_2r}]
\]
 connecting $r_+$ and $p_+^i$. Recall that $E_g\cap \partial C_{3\omega_2r}$ is
 the inner boundary of the multi-valued graph $E_g$. Therefore,
\[
\Gamma=\Gamma_+'\cup \nu \cup \gamma_+
\] is a simple closed curve that bounds a disk $D_+$ in $E$.
By the convex hull property, $D_+$ is contained in the
convex hull of its boundary. In particular, $D_+$ is
contained in the vertical slab $\{|x_1|\leq 3\omega_2 r \}$. Note however that
$\Gamma_+$ separates the $(N_1+2)$-th sheet of $E_g$,
that is the set $u^+[N+2]$, into two components that are not contained in the
slab $\{|x_1|\leq3\omega_2 r \}$ because $\omega_1 r>4\omega_2 r$.
By elementary separation properties, one of these components must be
in $D_+\subset \{|x_1|\leq3\omega_2 r \}$, which gives a contradiction.

An argument similar to that presented in the previous paragraph shows
$[\Gamma_+-\{p_+^e\} ] \cap E_g\cap \partial C_{3\omega_2r}=\O$ by using a similarly defined
embedded arc $\nu\subset [E_g\cap \partial C_{3\omega_2r}]$ connecting $r_+$ to $p^i_+$.

Next consider the case that
$r_+\in E_g\cap \partial C_{\omega_1 r-\omega_2r}$.  In this case,
$r_+\in \partial C_{\omega_1 r-\omega_2r}$ is the
end point of some component arc $\g$ of $\{x_1=0\}\cap E_g$ with the
other end point of $\g$ being $p_\g \in \partial E_g\cap \partial C_{3\omega_2 r}$.
Letting $\nu\subset \partial E_g\cap \partial C_{3\omega_2 r} $
be the arc with end points $p_\g$   and $p^i_+$, we find that
\[
\Gamma=\Gamma_+'\cup \g_j \cup \nu \cup \gamma_+
\] is a simple closed curve that bounds a disk
$D_+$ in $E$.  Similar arguments as in the previous two paragraphs
then provide the desired contradiction that the
minimal disk $D_+$ cannot be contained in the convex hull of its boundary.

Finally, suppose that $r_+\in \xi_i$.  In this case,
we find the desired simple closed curve
\[
\Gamma=\Gamma_+'\cup \nu \cup \gamma_+,
\]
where

\[
\nu\subset \xi_i\cup \eta\cup [E_g\cap \partial C_{3\omega_2r}]
\]
is an embedded arc that connects
the points $r_+$ with  $p^i_+$, see Figure~\ref{nu2}.
\begin{figure}[h]
\begin{center}
\includegraphics[width=12cm]{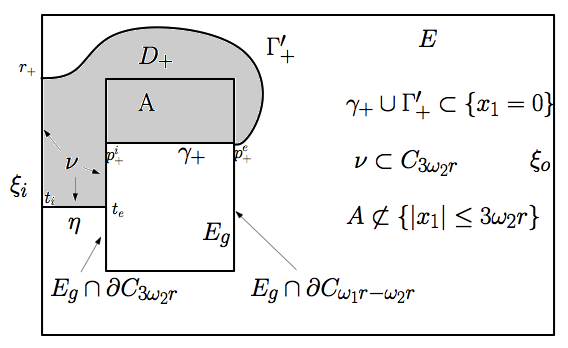}
\caption{}
\label{nu2}
\end{center}
\end{figure}
Arguing as previously,
we obtain a contradiction and the claim is proved.
\end{proof}

Denote by $\nu$  the arc in
$\partial E_g\cap\partial C_{3\omega_2r}$ connecting $p_+^i$
and $p_-^i$ and let $\alpha$ be
the arc in $\xi_o$ connecting $q_+$ to $q_-$. The simple closed curve
\[
\Gamma_+\cup\gamma_+\cup\nu\cup\gamma_-\cup\Gamma_-\cup\alpha
\]
bounds a subdisk $E'$ in $E$ which contains the
$N_1$-valued graph $E_g'=\bigcup_{k=2}^{2n-1}u^+[k]\subset E_g$ over
$A(\omega_1r-\omega_2r,3\omega_2r)$; see Figure~\ref{eg}.
\begin{figure}[h]
\begin{center}
\includegraphics[width=12cm]{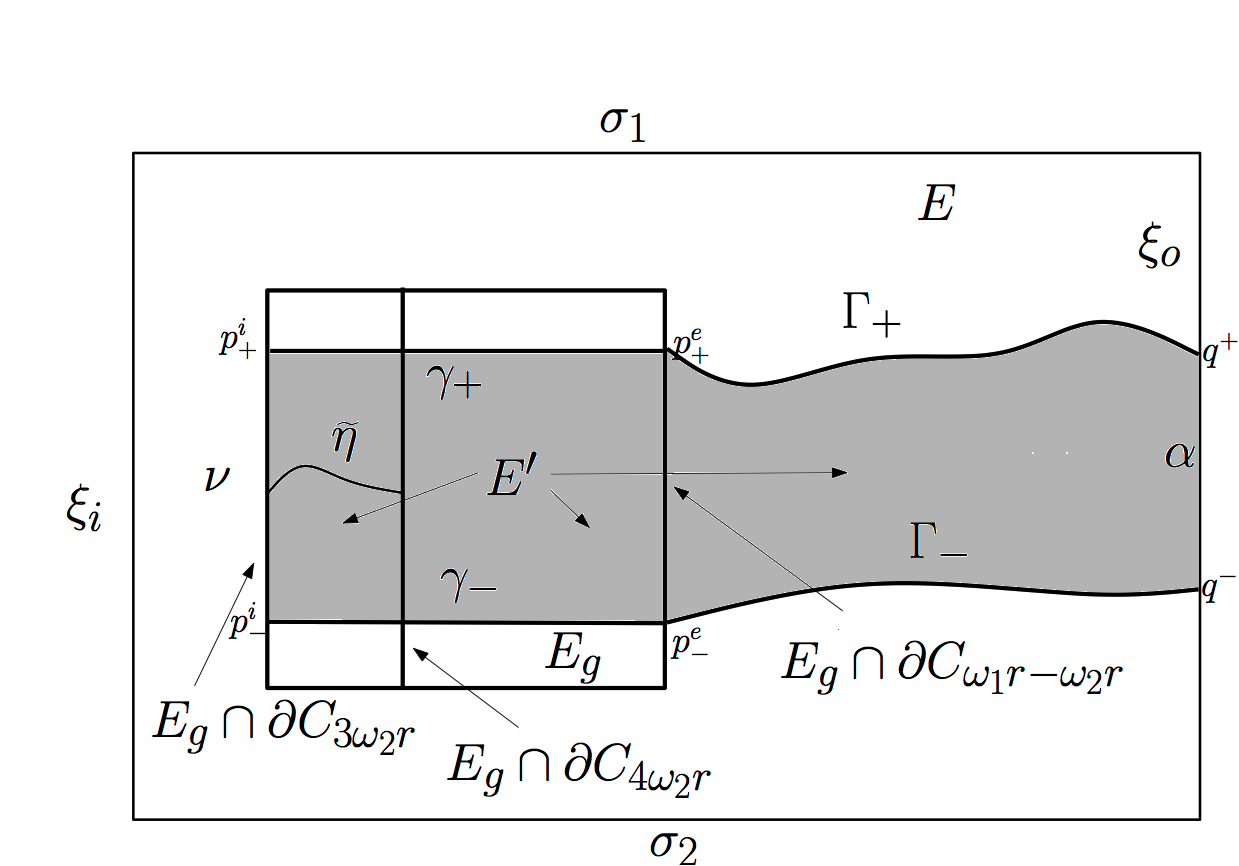}
\caption{}
\label{eg}
\end{center}
\end{figure}
The norm of the gradient of this $N_1$-valued graph $E_g'$ is also less than
$\frac{6\pi C_g}{\omega_2}$. Note that if $\omega_2 > \bar{\omega}_2>4\pi (N+2)$,
where $\bar{\omega}_2$ is chosen sufficiently large, then
\[
\partial E'\subset \B(4\omega_2 r)\cup\{x_1=0\}\cup\partial \B(\ve).
\]
The disk $E'$ is the stable minimal disk to which we will apply Theorem~\ref{2vgminimal}.

By construction $E'$
contains the $N_1$-valued subgraph $\widetilde{E}'_g$ of $E'_g$
over the annulus $A(\omega_1 r-\omega_2r, 4\omega_2 r)$
with norm of the gradient less than
$\frac{6\pi C_g}{\omega_2}$. Thus, if $\omega_2>\frac{6\pi C_g}{\ve_1}$, $E'$
satisfies item~2 of Theorem~\ref{2vgminimal}
 with
\[
\delta=(\omega_1-\omega_2)r,\, \, \delta r_0
=4\omega_2 r \,\text{ and } \,r_0=\frac{4\omega_2}{\omega_1-\omega_2}.
\]

Clearly, as described in Figure~\ref{eg}, there exists a curve
$\widetilde \eta\subset \Pi^{-1}(\{(x_1,x_2,0)\mid x_1^2 +x_2^2 \leq (4\omega_2 r)^2\})$
connecting $\widetilde{E}'_g$ to
$\partial E' - \partial \B(\ve)$ and thus $E'$
also satisfies item~4 of Theorem~\ref{2vgminimal}.

Recall that $r$ depends on $n$ and
it goes to zero as $n\to\infty$.
After choosing $\omega_1$ sufficiently large, and then
choosing  $n$ much larger, we obtain the inequalities
\[
\frac{4\omega_2}{\omega_1-\omega_2}\Omega_1<1
\text{ and }
\frac{\ve}{r(\omega_1-\omega_2)}\geq1,
\]
 that together imply that the minimal disk $E'$ satisfies item~1 of Theorem~\ref{2vgminimal}.

Finally, let $\wt u_{\rm mid}$ denote the middle sheet of
$\widetilde{E}'_g$. Using the
bound $\frac{6\pi C_g}{\omega_2}$ for the norm of the gradient gives that
\[
\Pi^{-1}(\{(x_1,x_2,0)\mid x_1^2 +x_2^2 \leq (4\omega_2 r)^2\})\cap \wt{u}_{\rm mid} \subset
\left \{|x_3|\leq 20\pi \omega_2 r \cdot \frac{6\pi C_g}{\omega_2}\right\}
\]
and if $\omega_2>\frac{30\pi^2 C_g}{\ve_1} $
then $20\pi \omega_2 r \cdot \frac{6\pi C_g}{\omega_2}<\ve_14\omega_2 r$; in other words,
 $E'$ satisfies
item~3 of Theorem~\ref{2vgminimal}.

Recall that $M$ and $r$ depend on $n$ and that $ \ov{\omega}>4\pi (N+2)$ is
 chosen sufficiently large so that
 $$ \partial E'\subset \B(4\omega_2 r)\cup\{x_1=0\}\cup\partial \B(\ve).$$
 We now summarize what we have shown so far.
If  $n$ is sufficiently large, then the quantities discussed at the
beginning of the proof can be assumed to satisfy
the following conditions:

\[
 \wt N= N_1+5,\,\; \omega_2 > \max \left \{  \bar{\omega}_2, \frac{30\pi^2 C_g}{\ve_1}\right\},
\]
\[
\omega_1 > 4\omega_2\Omega_1+10\omega_2 \,\text{ and } \,
r <  \frac{\ve}{\omega_1-\omega_2},
\]
and then the stable minimal disk $E'$ satisfies the hypotheses of
Theorem~\ref{2vgminimal} and thus it contains a 10-valued graph $E'_g$
over $A(\ve\slash\Omega_1,4\omega_2r)$ with norm of the
gradient less than $\tau$. This completes the proof of item~1 in the theorem.

The proof that the intersection
$C_{\omega_1r} \cap M\cap W[E'_g]$ consists of exactly two or four 9-valued graphs
follows   from the construction of $W[E'_g]$ and
Description~\ref{description}; note that because
of embeddedness and by construction, the multi-valued graphs on $M$ near the origin and in
$C_{\omega_1r} $ spiral together with
the minimal multivalued graph $E_g'$. Whether there are two or four
9-valued graphs depends on whether the convergence
to the helicoid detailed in Description~\ref{description} is with
multiplicity one or two. The norms of the gradients
of such graphs is bounded by $\frac{\tau}{2}$ as long
as $\ve_2$ is. This completes
the proof of item~2 in the theorem.

The proof of item~3 will use the existence of the
minimal 10-valued graph $E_g'$ and a standard dragging argument. First note that
because of the gradient estimates for the minimal
10-valued graphs, a simple calculation shows
that $W[E_g']$ is contained in the open cone
\[
\mathcal C=\left\{(x_1,x_2,x_3)\mid |x_3|< 80\tau \sqrt{x_1^2+x_2^2}\right\}.
\]

We begin the proof of item~3 by proving the existence of certain embedded
domains of vertical nodoids, where nodoids are the nonembedded
surfaces of revolution of nonzero constant mean curvature defined by Delaunay~\cite{de1}
and where vertical means that  the $x_3$-axis is its axis of revolution .

\begin{lemma} \label{lem6.1}
There exists constants $\beta_1,\beta_2\in (0,1]$ such that the following
holds. For $s\in (0,1]$ consider the circles
$C_1=\{(x_1,x_2,x_3)\mid x_1^2+x_2^2=\beta_1^2 s^2, \, x_3
=-\beta_2 s\}$ and $C_2=\{(x_1,x_2,x_3)\mid x_1^2+x_2^2=\beta_1^2 s^2, \, x_3=\beta_2 s \}$.
Then there exists a subdomain $\cN_s$ of a vertical nodoid
with constant mean curvature $\frac1s$ such that $\cN_s$ is embedded
with boundary $C_1\cup C_2$, and $\cN_s$  is
contained in the convex hull of its boundary.
\end{lemma}
\begin{proof}[Proof of Lemma~\ref{lem6.1}]
The lemma follows by rescaling, after finding the correct
numbers $\beta_1, \beta_2$ for $s=1$ and
 a compact embedded portion $\cN_1$ of a vertical
nodoid with constant mean curvature
one and
such that  $\cN_1$  is contained in the boundary of its convex hull.
\end{proof}

Note that $\cN_s$ separates $C_{\beta_1 s}$ into a bounded and
an unbounded component and  its mean curvature vector points
into the bounded component.
Given $p\in\rth$ we let $\cN_s(p)$ be $\cN_s$ translated
by $p$. Suppose now that $p=(x_1,x_2,0)$ such that
$x_1^2+x_2^2=(\frac{\ve}{10\Omega_1(\tau)})^2$. Note that
if $\tau<\frac{\beta_2}{1000}$, then $\cN_\alpha(p)$ with
$\alpha=\frac{80\ve\tau}{\Omega_1(\tau)\beta_2}$ satisfies
\[
\partial \cN_\alpha(p)\cap \mathcal C=\O
\]
and its mean curvature, $\frac 1\alpha$, is greater than one.

Consider the point
$p=(\frac{\ve}{10\Omega_1(\tau)},0,0)$. Let
\[
\Gamma=\left\{(x_1,x_2,0)\,:\, \left |\frac{x_2}{x_1}\right | <2, \, x_1>0,\,
(4\omega_2 r)^2<x_1^2+x_2^2<\left (\frac{\ve}{\Omega_1(\tau)}\right )^2\right\}.
\] Note that $E_g'$ separates $\Gamma\times \mathbb R$
into nine bounded components and that
\[
 \cN_\alpha(p)\subset \Gamma \times \mathbb R.
 \]
Let $\Delta\subset \mathcal C$ be one of these bounded components.

In order to prove item~3 of the theorem, we will first show that
\[
\Delta\cap M\cap C_{\beta_1\alpha}(p)\neq\O.
\]
Since $ \cN_{\alpha}(p)\subset  C_{\beta_1\alpha}(p)$, it
suffices to show that $\Delta\cap M\cap \cN_{\alpha}(p)\neq\O$.
Consider the family of rescaled nodoids
$\cN^t=\cN_{t\alpha}(tp)$ for $t\in (0,1]$. Since $\partial \cN^t\cap \Delta=\O$
and the mean curvature of $\cN^t$ is greater than one, by using a so-called dragging argument,
it suffices to show that   there exists some $\ov{t}$  small so that
$\Delta\cap M\cap \cN^{\ov{t}}$ contains an interior point
of $M\cap \Delta$ and $\cN^{\ov{t}}\subset \G\times \R$. This is because by an application of the
mean curvature comparison principle, the family of
nodoids $\cN^t$, $t\in [\ov{t},1]$, cannot have  a last point of interior contact with
$M\cap \Delta$. Recall that $C_{\omega_1 r}\cap\Delta \cap M$
contains at least one component which is a graph  over
$
\left\{(x_1,x_2,0)\mid \left |\frac{x_2}{x_1}\right
| <2,\, x_1>0, \,(4\omega_2 r)^2<x_1^2+x_2^2<(\omega_1 r )^2\right\}
$ and therefore a simple calculation shows that by
taking $\overline t=\frac{10\Omega_1(\tau)}{\ve}\omega_1r$, then
\[
\Delta\cap M\cap \cN^{\overline t}
\]
contains an interior point of $M\cap \Delta$. Furthermore, by our earlier
choice of  $\omega_1 > 4\omega_2\Omega_1+10\omega_2$,
we conclude  that $\cN^{\ov{t}}\subset \G\times \R$.

Using the fact that $\Delta\cap M\cap \cN_{\alpha}(p)\neq\O$,
a straightforward further prolongation argument,
by moving the center $p$ of $\cN_\a(p)$
along the circle centered at the origin of radius $|p|$,
finishes the proof of item~3, which completes the proof of Theorem~\ref{extmvg}.
 \end{proof}

\subsection{Extending the constant mean curvature
multi-valued graph to a scale proportional
to $\ve$ }
\label{sec3.3}

In this section we reintroduce the subscripted indexes for the sequence of surfaces $M_n$.
We show that for $n$ sufficiently large,
$M_n$ contains two, oppositely oriented 3-valued graphs on a fixed horizontal scale
and with the norm of the gradient small.
This is an improvement to the
description in Section~\ref{sec3} where the multi-valued graphs formed on the
scale of the norm of the second fundamental form;
the next theorem was inspired by and generalizes Theorem II.0.21 in~\cite{cm21}
to the nonzero constant mean curvature setting.

\begin{theorem}\label{extmvg2}
Given $\tau_2>0$, there exists $\Omega_2=\Omega_2(\tau_2)$
and $\omega_2=\omega_2(\tau_2)$ such that for  $n$ sufficiently large,
the surface
$M_n$ contains  two  oriented 3-valued graphs $G_n^{up},G_n^{down}$ over
$A(\ve\slash\Omega_2,4\omega_2\frac{\sqrt2}{|A_{M_n}|(\vec 0)})$
with norm of the gradient less than $\tau_2$,
where $G_n^{up}$ is oriented by an upward pointing normal and
$G_n^{down}$ is oriented by a downward pointing normal. Furthermore,
these  3-valued graphs can be chosen to lie between the
sheets of the 10-valued minimal graph $E'_g(n)$
given in Theorem~\ref{extmvg} and  so that
$G_n^{up}\cap W[G_n^{down}]$ is a 2-valued graph.
\end{theorem}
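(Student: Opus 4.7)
The strategy is to apply Theorem~\ref{extmvg} with a sufficiently small parameter $\tau=\tau(\tau_2)>0$, then harvest the desired 3-valued graphs $G_n^{up}, G_n^{down}$ from the structure of $M_n$ sitting between the sheets of the minimal scaffold $E'_g(n)$, extending them from the inner scale $\omega_1 r_n$ out to the fixed scale $\ve/\Omega_2$. Concretely, set $\Omega_2(\tau_2):=\Omega_1(\tau)$ and $\omega_2(\tau_2):=\omega_2(\tau)$; for each large $n$, Theorem~\ref{extmvg} furnishes the 10-valued minimal graph $E'_g(n)\subset X_{M_n}$ over $A(\ve/\Omega_2,\,4\omega_2 r_n)$ with gradient less than $\tau$. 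The solid region $W[E'_g(n)]$ is subdivided by the ten sheets of $E'_g(n)$ into nine ``slab'' regions $W_1,\dots,W_9$, and each connected component of $M_n\cap W[E'_g(n)]$ is trapped between two consecutive such sheets.

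The core of the argument is to show that each component of $M_n\cap W[E'_g(n)]$ that appears as part of a 9-valued graph in $C_{\omega_1 r_n}$ (item~2 of Theorem~\ref{extmvg}) prolongs continuously out to the outer scale as a multi-valued graph with small gradient. Two independent pieces of Theorem~\ref{extmvg} control the endpoints of this extension: item~2 gives the graphical structure at the inner scale with gradient at most $\tau/2$, and item~3 guarantees that at least eight disconnected components of $M_n\cap W[E'_g(n)]$ persist at the outer point $p=(\ve/(10\Omega_1),0,0)$ inside the thin cylinder $C_{\beta_1\alpha}(p)$. Combined with embeddedness and with $M_n\cap E'_g(n)=\emptyset$, the inner components of $M_n\cap W[E'_g(n)]$ have no option but to extend out to the outer scale without breaking or self-intersecting. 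A CMC thin-slab gradient estimate --- obtained by comparison with vertical translates of the bounding minimal sheets of $E'_g(n)$ used as barriers (here the fact that these sheets have gradient $\tau$ controls the vertical thickness of each slab at each radius), supplemented by the nodoid-barrier technique of Lemma~\ref{lem6.1} already used in the proof of item~3 of Theorem~\ref{extmvg} --- then guarantees that each such prolongation is a multi-valued graph over $A(\ve/\Omega_2,\,4\omega_2 r_n)$ with gradient bounded by a constant multiple of $\tau$. Choosing $\tau$ small enough forces this gradient to be below $\tau_2$.

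Finally, from the helicoidal interleaving of $M_n$ encoded by items~4--6 of Description~\ref{description} and preserved throughout the extension, there is among the extended multi-valued graphs at least one with upward-pointing normal in some slab $W_k$ and one with downward-pointing normal in the adjacent slab $W_{k\pm 1}$. Truncating each to three sheets chosen near the middle of $W[E'_g(n)]$ (to stay away from the topmost and bottommost slabs where boundary effects could arise) produces $G_n^{up}$ and $G_n^{down}$. The interleaving statement that $G_n^{up}\cap W[G_n^{down}]$ is a 2-valued graph is then forced by embeddedness: the three sheets of $G_n^{up}$ spiral alternately with those of $G_n^{down}$, so exactly the two middle sheets of $G_n^{up}$ lie strictly inside the solid region bounded by the outermost sheets of $G_n^{down}$. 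The main obstacle in this program is precisely the extension step: we have graphical control on $M_n$ only on the tiny scale $\omega_1 r_n$ and we must propagate it out to the fixed macroscopic scale $\ve/\Omega_2$ without the benefit of a direct Colding--Minicozzi type gradient estimate for CMC surfaces; the minimal 10-valued graph $E'_g(n)$ together with the nodoid barriers of Lemma~\ref{lem6.1} are precisely the tools designed to overcome this difficulty.
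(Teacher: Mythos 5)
There is a genuine gap at the heart of your argument: the ``CMC thin-slab gradient estimate'' that you invoke to propagate graphicality from the inner scale $\omega_1 r_n$ out to the fixed scale $\ve/\Omega_2$ does not exist as stated, and you never actually construct it. Being trapped between two consecutive sheets of the minimal scaffold $E'_g(n)$ confines $M_n$ to a region of small vertical extent, but for a constant mean curvature surface (unlike for a \emph{stable minimal} surface, where Lemma~\ref{gradest} converts the thin-slab condition into a pointwise gradient bound via Schoen's curvature estimate) this controls only the height, not the slope: nothing a priori prevents a tightly folded neck or a point with vertical tangent plane inside the slab at some intermediate radius. Likewise, item~3 of Theorem~\ref{extmvg} tells you that at least eight components of $M_n\cap W[E'_g(n)]$ meet a small cylinder at the outer radius, but says nothing about the surface being graphical, or having small gradient, anywhere between the two scales. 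The nodoid barriers of Lemma~\ref{lem6.1} are used in the paper only in a dragging argument to prove \emph{non-emptiness} of certain intersections; they do not yield gradient bounds. So the sentence ``the inner components have no option but to extend out to the outer scale without breaking or self-intersecting,'' followed by the asserted gradient bound, is precisely the main difficulty restated rather than resolved.

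The paper closes this gap with a blow-up contradiction argument that is entirely absent from your proposal. One defines $\rho(n(l))$ as the supremal radius up to which all tangent planes of $M_{n(l)}\cap W_{n(l),l}$ make angle less than $\tan^{-1}(\tau)$ with the horizontal; if the extension fails, there is a point $p_{n(l)}$ at radius $\rho(n(l))$ where the tangent plane tilts by at least $\tan^{-1}(\tau)$. Rescaling the component of $M_{n(l)}\cap \B(p_{n(l)},\rho(n(l))/2)$ by $1/\rho(n(l))$ produces planar domains with boundedly many boundary components, mean curvatures tending to zero, contained in horizontal slabs of height tending to zero, yet with a definite tilt at the origin --- forcing the norm of the second fundamental form to blow up there. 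Theorem~\ref{extmvg} is then applied a \emph{second time} to these rescaled surfaces: item~3 produces at least eight disconnected local sheets in a small cylinder, contradicting the fact that the projection $\Pi_{n(l)}$ is at most four-to-one on the region below the supremal radius (this count comes from the two-or-four 9-valued graph structure of item~2). It is this self-improvement step --- re-applying the local helicoid description at the putative point of gradient failure --- that substitutes for the missing CMC gradient estimate, and without it your extension step does not go through. Your construction of $G_n^{up}$, $G_n^{down}$ and the interleaving claim at the end are consistent with the paper, but they rest on the unproven extension.
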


 \begin{proof}
Recall that after normalizing the surfaces $M_n$ by
rigid motions that are expressed as translations  by vectors of length
at most $\ov{\ve}$ for any particular small choice $\ov{\ve}\in (0,\ve/4)$,
composed with rotations
fixing the origin,
we  assume that the surfaces $M_n$ satisfy $|A_{M_n}|(\vec 0)>n$
and that the origin is a point of almost-maximal curvature on $M_n$
around which one or two vertical helicoids are forming  in $M_n$ on the scale of
$|A_{M_n}|(\vec 0)$.  It is in this situation that we apply  Theorem~\ref{extmvg}
to obtain the 10-valued minimal graph $E'_g(n)$ described in the
statement of Theorem~\ref{extmvg2}.

By  Theorem~\ref{extmvg}, for each $l\in\N$, there exist
 \[
\ov{n}(l)>2,\,\, \Omega_1(l)>1, \,\, \omega_2(l)>0,\,\,
 \omega_1(l)>10\omega_2(l) 
 \]
 such that for $n>\ov{n}(l)$, $M_n$ contains a minimal 10-valued graph $E'_g(n,l)$ over
 $$A(\frac{\ve}{\Omega_1(l)},4\omega_2(l)\frac{\sqrt2}{|A_{M_n}|(\vec 0)})$$
 with the norms of the gradients bounded by $\frac1l$; we will also assume
 for all $l\in \N$ that
 $\ov{n}(l+1)>\ov{n}(l)\in \N$ and that, after replacing by a subsequence and reindexing,
the inequality
$$4\omega_2(l)\frac{\sqrt2}{|A_{M_n}|(\vec 0)}< \frac{\ve}{n\Omega_1(l)}$$ also holds  when $n>\ov{n}(l)$;
in particular, under this assumption the ratios of the  outer radius to the inner radius of
the  annulus over which the 10-valued minimal multigraph $E'_g(n,l)$ is defined
go to infinity as $n\to \infty$, and
$\lim_{n\to \infty} 4\omega_2(l)\frac{\sqrt2}{|A_{M_n}|(\vec 0)}=0$.
 Furthermore, by item~2 of Theorem~\ref{extmvg}, for  $n>\ov{n}(l)$, the intersection
$$
 C_{\omega_1(l)\frac{\sqrt2}{|A_{M_n}|(\vec 0)}} \cap M_n\cap W[E'_g(n,l)]
 $$
 consists of exactly two or four 9-valued graphs with the norms of
 the gradients bounded by $\frac1{2l}$.

Let $\tau>0$ be given and let
 \[
 W_{n,l} := \{p\in \ov{W}[E'_g(n,l)]\mid -3\pi\leq \theta\leq 3\pi\}.
\]
Then, for $n>\ov{n}(l)$,
$C_{\omega_1(l)\frac{\sqrt2}{|A_{M_n}|(\vec 0)}}\cap M_{n}\cap  W_{n,l} $
consists of a collection $\cC_{n,l}$  of
either two or four
3-valued graphs with the norms of the gradients bounded from above
by $\frac{1}{2l}$.

We claim that  for some $l_\tau \in \N$ sufficiently large,
and given $n> \ov{n}(l_\tau)$, then
the 3-valued graphs in $\cC_{n,l_\tau}$ extend horizontally to
3-valued graphs over $$A(\frac{\ve}{n(l_\tau)\Omega_1(l_{\tau})},4\omega_2(l_{\tau})\frac{\sqrt2}{|A_{M_n}|(\vec 0)})$$
with the norms of the gradients less than $\tau$.
This being the case, define $G_n^{up}$, $G_n^{down}$ to be the two related extended graphs
that intersect $M_n(\omega_1)$ and which have their normal vectors
pointing up or down, respectively. Then, with respect these choices,
the remaining statements of the theorem can be easily verified
to hold.

Hence, arguing by contradiction suppose that the claim
fails for some $\tau>0$.
Then for every $l\in \N$ sufficiently large, there exists a surface
$M_{n(l)}$ with $n(l)>\ov{n}(l)$
such that the following statement holds: For $$r_{n(l)}=\frac{\sqrt2}{|A_{M_{n(l)}}|(\vec 0)},$$
the 3-valued graphs in $\cC_{n(l),l}$ do not extend horizontally
as 3-valued graphs over the annulus
$$A(\frac{\ve}{n(l)\Omega_1(l)},4\omega_2(l)r_{n(l)})$$
with the norms of the gradients less than $\tau$,
where by our previous choices,
$$4\omega_2(l)r_{n(l)}<\frac{\ve}{n(l)\Omega_1(l)}<\frac{\ve}{2\Omega_1(l)}.$$

Thus for any fixed $l$ large enough  so that $M_{n(l)}$ exists, let
$$\rho({n(l)})\in [4\omega_2(l)r_{n(l)},\frac{\ve}{2\Omega_1(l)}] $$
be the supremum of the set of
numbers $\rho\in [4\omega_2(l)r_{n(l)},\frac{\ve}{2\Omega_1(l)}] $
such that for any point
$p\in  C_{\rho}\cap  M_{n(l)}\cap W_{n(l),l}$, the tangent plane to $M_{n(l)}$ at $p$
makes an angle less than  $\tan ^{-1}(\tau)$  with the $(x_1,x_2)$-plane.
Note that $\rho(n(l))\geq\omega_1(l) r_{n(l)}$ because of
the aforementioned properties of the surfaces in $\cC_{n(l),l}$.

 Let
\[
\Pi_{n(l)}\colon  W_{n(l),l} \to [4\omega_2(l)r_{n(l)},\ve/\Omega_1(l)]\times [-3\pi,3\pi]
\]
denote the natural projection. The map $\Pi_{n(l)}$ restricted to
$ \Int({C}_{\rho({n(l)})})\cap M_{n(l)}\cap W_{n(l),l}$ is a proper submersion
and thus the preimage of a sufficiently small neighborhood of a point
\[
(\rho,\theta)\in [4\omega_2(l)r_{n(l)},\rho({n(l)}))\times [-3\pi, 3\pi]
\]
 consists of exactly two or four graphs.
Let $p_{n(l)}=(\rho(n(l)),\theta_{n(l)},x_3^{n(l)})\in
  W_{n(l),l}\cap M_{n(l)}$ be a point where the tangent plane
of $M_{n(l)}$ makes an angle greater than or equal to
$\tan^{-1}(\tau)$ with the $(x_1,x_2)$-plane and let $T_{n(l)}$
be the connected component of
\[
M_{n(l)}\cap \B\left(p_{n(l)},\frac{\rho(n(l))}{2}\right)
\]
containing $p_{n(l)}$. Because of the gradient
estimates for the 10-valued minimal graph $E'_g(n(l),l)$, $T_{n(l)}$
is contained in a horizontal slab of height at
most $18\rho(n(l))\frac1l$. Furthermore, we remark that
$\partial T_{n(l)}\subset \partial \B(p_{k(l)},\frac{\rho(n(l))}{2})$
and $\Pi_{n(l)}$ restricted to
\[
\Int(C_{\rho(n(l))})\cap T_{n(l)}\cap   W_{n(l),l},
\]
is at most  four-to-one.

For each $n(l)$, consider the rescaled sequence
$\widetilde{T}_{n(l)}=\frac{1}{\rho_{n(l)}}T_{n(l)}$. We claim that the
number of boundary components of $T_{n(l)}$, and
thus of $ \widetilde{T}_{n(l)}$, is bounded from
above by the number of boundary components
of $M_{n(l)}$ which is at most $m$. Otherwise, since $M_{n(l)}$
is a planar domain, there exists a component $\Lambda$ of
$M_{n(l)}- T_{n(l)}$ that is disjoint from
$\partial M_{n(l)}$ and contains points outside the
ball $\ov{\B}(p_{n(l)},\frac{\rho(n(l))}{2})$.
Since the mean curvature $H\leq 1$ and $\ve<\frac 12$, an application of
the mean curvature comparison principle with
spheres centered at $p_{n(l)}$
implies that $\Lambda$ contains points outside
of   $\ov{\B}(\ve)$. However $\Lambda \subset M_{n(l)} \subset \ov{\B}(\ve)$ and this
contradiction proves the claim.

Note that
$\partial \widetilde{T}_{n(l)}\subset \partial \B(\frac{1}{\rho(n(l))}p_{n(l)},\frac 12 )$,
and that the
constant mean curvatures of the surfaces
$\widetilde{T}_{n(l)}$ are going to zero as $l\to \infty$. We next
apply some of the previous results contained in this
paper, e.g. Theorem~\ref{extmvg}, to study the geometry of the planar domain
$\widetilde T_{n(l)}$ near the point $\frac{1}{\rho(n(l))}p_{n(l)}$,
which by our choices lies in $ \partial C_1$.

The surfaces
$\widetilde{T}_{n(l)}$ are contained in  \underline{horizontal} slabs
of height at most $\frac{18}{l}$. Moreover, there exist
rigid motions $\mathcal R_{n(l)}$ that are each a
  translation composed with a rotation around the $x_3$-axis,
such that the following  hold:\ben
 \item $\mathcal R_{n(l)}(\frac{1}{\rho(n(l))} {p}_{n(l)})=\vec 0$.

 \item For
$\displaystyle \Gamma=\left\{(x_1,x_2,0)\mid x_1>0,\,\,
x_1^2+x_2^2< \frac 14,\,\, x_2 <\frac {x_1} 4\right \}
$, \\
$\displaystyle
(\Gamma\times\R) \cap \mathcal R_{n(l)}(\widetilde{T}_{n(l)})
$
 consists of at least two and at most four components, each of which is graphical
 over $\Gamma$. Note that in addition to other properties, $\G$ and $\mathcal R_{n(l)}$
 are chosen so that
 $\Pi_{n(l)}[\rho(n(l))\mathcal R_{n(l)}^{-1}(\G)] \subset [ C_{\rho(n(l))}\cap \{x_3=0\}]$. 
\een
 Let $\overline T_{n(l)}=\mathcal R_{n(l)}(\widetilde{T}_{n(l)})$
 and   note that $\partial \overline{T}_{n(l)}\subset \partial \B( \frac 12 )$.

Since the height of the slab containing $\overline{T}_{n(l)}$
is going to zero as $l\to\infty$ and
the tangent plane at $\vec 0$ makes an angle of
at least $\tan^{-1}(\tau)$ with the $(x_1,x_2)$-plane,   it follows that as
$l$ goes to infinity, the norm of the
second fundamental form of $\overline{T}_{n(l)}$
is becoming arbitrarily
large at certain points in $\overline{T}_{n(l)}$
converging to $\vec 0$.
Using this property
that as $l$ goes to infinity, the norm of the second fundamental
form of $\overline{T}_{n(l)}$ is becoming
arbitrarily large nearby $\vec 0$, we will  prove that
$\overline{T}_{n(l)}$ must
intersect the region $\Gamma\times\R$
in more than four components, which will produce the desired  contradiction.

After replacing by a subsequence and normalizing the surfaces by
translating by vectors  $\vec{v}_{n(l)}$, $\vec{v}_{n(l)} \to \vec{0}$, Theorem~\ref{extmvg}
implies that there exist
a fixed rotation $\mathcal R$ and
constants $\beta_1,\beta_2\in (0,1]$ such that  the following holds:
Given $\tau_1<\frac{\beta_2}{1000}$ there exists $\lambda_{\tau_1}\in \N$, $\omega_2(\tau_1)$
and  $\Omega_1(\tau_1)$ such that  for $l>\lambda_{\tau_1}$:
\begin{enumerate}
\item There exists a 10-valued minimal graph $E'_g(l, \tau_1)$ over
$$A(\frac{1}{2\Omega_1(\tau_1)},4\omega_2(\tau_1)\frac{\sqrt2}{|A_{\cR(\ov{T}_{n(l)})}|(\vec 0)},)$$
with norm of the gradient less than $\tau_1$ (item 1 of Theorem~\ref{extmvg});
\item With
$\alpha=40\frac{1}{\Omega_1(\tau_1)}\frac{\tau_1}{\beta_2 }$
and $p=(x_1,x_2,0)$ with $x_1^2+x_2^2= ( \frac{1}{20\Omega_1(\tau_1)}  )^2$,
the intersection
\[
C_{\beta_1\alpha}(p)\cap \mathcal {R}(\overline{T}_{n(l)})
\cap \left \{(x_1,x_2,x_3)\mid |x_3|\leq \frac{40 \tau_1}{\Omega_1(\tau_1)}\right \},
\]
 is non-empty and contains at least eight disconnected
components (item 3 of Theorem~\ref{extmvg}).
\end{enumerate}

We claim that $\mathcal R$ is a rotation around the $x_3$-axis.
Arguing by contradiction, suppose that this is not the case.
Note that for any $\tau_1<\frac{\beta_2}{1000}$,
as $l$ goes to infinity, the slab containing
$\mathcal {R}(\overline{T}_{n(l)})$  converges to a
plane $\mathcal P$ through the origin that is not the $(x_1,x_2)$-plane
as the slab was horizontal before applying $\cR$.
Therefore $E'_g(l, \tau_1)$ converges to a disk in $\mathcal P$.
Let $\theta$ denote the angle that $\mathcal P$ makes with the
$(x_1,x_2)$-plane and pick
$$\tau_1=\min(\tan (\theta/2), \frac{\beta_2}{1000}).$$ This choice of
$\tau_1$ leads to a contradiction because $E'_g(l, \tau_1)$ cannot
converge as a set to a disk in $\mathcal P$ and have norms of the gradients
bounded by $\tau_1$. This contradiction proves that the rotation $\mathcal R$
is a rotation around the $x_3$-axis.

Finally, one obtains a contradiction by finding
$p=(x_1,x_2,0)$ with $x_1^2+x_2^2= ( \frac{1}{20\Omega_1(\tau_1)}  )^2$
and taking $\tau_1$ sufficiently small such that the
disk centered at $p$ of radius $\beta_1\alpha$ is contained in $\mathcal R(\Gamma)$. This
leads to a contradiction because on the one hand,
$C_{\beta_1\alpha}(p)\cap\mathcal R(\overline{T}_{n(l)})$ consists
of at least eight
components. On the other hand $\mathcal{R}(\Gamma\times\R) \cap\mathcal{R}(\overline{T}_{n(l)})$
consists of at most four components, each of which is graphical. This last
contradiction completes the proof of Theorem~\ref{extmvg2}.
 \end{proof}

Theorem~\ref{extmvg}, Theorem~\ref{extmvg2} and
their proofs not only demonstrate that there exists a 3-valued graph in $M_n$
that extends on a fixed, horizontal scale when
$n$ is large and they also give the following, detailed geometric
description of a planar domain with constant mean curvature, zero flux and
large norm of the second fundamental form at
the origin. In the following theorem,
we summarize what we have proven so far.
The precise meaning of certain statements are made
clear from the previous results and proofs.

\begin{theorem}\label{mainextension}
Given $\ve,\tau>0$, $\ov{\ve}\in (0,\ve/4)$ and $m\in \mathbb N$, there
exist  constants $\Omega_\tau=\Omega(\tau,m)$,
$\omega_\tau=\omega(\tau,m)$ and  $G_\tau=G(\ov{\ve}, \ve,\tau,m) $
such that if $M$ is a  connected compact $H$-planar domain with zero
flux, $H\in (0,\frac 1{2\ve})$, $M\subset \ov{\B}(\ve)$,
$\partial M\subset \partial \B(\ve)$ and consists of at most $m$ components, $\vec 0\in M$ and
$|A_M|(\vec 0)>\frac{1}{\eta}G_\tau$, for $\eta\in (0,1]$,
then for any  $p\in \ov{\B}(\eta \ov{\ve})$ that is a maximum of the
function
$|A_{M}|(\cdot)(\eta\bar\ve-|\cdot|)$, after translating
$M$ by $-p$,
the following geometric description of $M$ holds.
\begin{itemize}

\item On the scale of the norm of the second
fundamental form  $M$ looks like one or two helicoid nearby  the
origin and, after a rotation that turns these helicoids into
vertical helicoids, $M$  contains a 3-valued graph
$u$ over  $A(\ve\slash\Omega_\tau,\frac{\omega_\tau}{|A_M|(\vec 0)})$
with norm of the gradient less than $\tau$.

\item The intersection $W[u]\cap [M -{\rm graph}(u)]$
contains an oppositely oriented 2-valued
graph $\widetilde u$ with norm of the gradient less
than $\tau$ and $\B(10\frac{\omega_\tau}{|A_M|(\vec 0)})\cap M$  includes a
 disk $D$  containing the interior boundaries of ${\rm graph}( u)$ and ${\rm graph}( \widetilde u)$.

\item If near the origin $M$ looks like one  helicoid,
then the previous description is accurate, namely
$W[u]\cap [M -{\rm graph}(u)]$ \underline{consists} of an oppositely oriented 2-valued
graph $\widetilde u$ with norm of the gradient less
than $\tau$ and $\B(10\frac{\omega_\tau}{|A_M|(\vec 0)})\cap M$  \underline{consists} of a
 disk $D$  containing the interior boundaries of ${\rm graph}( u)$ and ${\rm graph}( \widetilde u)$.

\item If near the origin $M$ looks like two helicoids, then
$W[u]\cap [M -{\rm graph}(u)\cup {\rm graph} (\widetilde u)]$ consists of
a pair of oppositely oriented 2-valued graphs $u_1$ and
$\widetilde u_1$ with norm of the gradient less than $\tau$ and
$\B(10\frac{\omega_\tau}{|A_M|(\vec 0)})\cap[ M-D]$
consists of a disk containing the inner   boundaries
of $ {\rm graph} (u_1)$ and $ {\rm graph} (\widetilde u_1)$.

\item  Finally,  given $j\in\mathbb N$ if we let the
constant $G_\tau$ depend on $j$ as well, then $M$ contains $j$ disjoint
3-valued graphs and the description in the previous
paragraph holds for each of them.
\end{itemize}
\end{theorem}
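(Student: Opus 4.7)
The plan is to recognize Theorem~\ref{mainextension} as a repackaging of Theorems~\ref{extmvg} and~\ref{extmvg2} into a single parametrized statement: the constants $\Omega_\tau$ and $\omega_\tau$ will be inherited directly from Theorem~\ref{extmvg2} (applied with $\tau_2=\tau$), while the constant $G_\tau$ will arise via a compactness/contradiction argument of exactly the kind already used in Sections~\ref{findmvg} and~\ref{sec3.3}. The role of $\eta$ and the inner ball $\ov{\B}(\eta\bar\ve)$ is purely to localize the almost-maximal curvature point $p$ inside a possibly much smaller ball than $\ov{\B}(\ve)$; once $M$ is translated by $-p$, the new origin is an almost-maximal curvature point in the sense of Section~\ref{sec3}, at the scale $\eta\bar\ve-|p|$.

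First I would verify the reduction. Setting $\tilde M := M - p$, one has $\vec 0 \in \tilde M$ and the almost-maximum property of the function $f(q) := |A_M|(q)(\eta\bar\ve - |q|)$ at $p$ gives
\[
|A_{\tilde M}|(\vec 0)(\eta\bar\ve - |p|) = f(p) \geq f(\vec 0) = |A_M|(\vec 0)\cdot \eta\bar\ve > G_\tau \bar\ve.
\]
In particular $|A_{\tilde M}|(\vec 0) > G_\tau/\eta$, and the origin on $\tilde M$ is itself an almost-maximal curvature point at scale $\eta\bar\ve-|p|$. Rescaling by $1/(\eta\bar\ve-|p|)$ to normalize both the scale and the mean curvature bound $H\leq 1$ places us squarely in the setting of Section~\ref{sec3}: the blow-up sequence converges to a vertical helicoid with either multiplicity one (Case $\cA$) or multiplicity two (Case $\cB$).

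Now Theorem~\ref{extmvg2} applies directly to yield the oppositely oriented 3-valued graphs $u$ and $\widetilde u$ over $A(\ve/\Omega_\tau, 4\omega_\tau/|A_M|(\vec 0))$ with gradient bounded by $\tau$, lying between consecutive sheets of the 10-valued minimal graph $E'_g$ of Theorem~\ref{extmvg}. The containment $\widetilde u \subset W[u] \cap [M - {\rm graph}(u)]$ is immediate from the final clause of Theorem~\ref{extmvg2}. To upgrade ``contains'' to ``consists of'' in the one-helicoid case, I would invoke item~4 of Description~\ref{description}, which asserts that in Case $\cA$ no other component of $M$ intrudes between the sheets of the pair. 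In the two-helicoid case, the second local component $M^*(\omega_1 r)$ of Description~\ref{description} supplies the additional pair $u_1, \widetilde u_1$. The disk $D \subset \B(10\omega_\tau/|A_M|(\vec 0))$ containing the inner boundaries is, in Case $\cA$, the disk $M(\omega_2 r)$ of Description~\ref{description} (with its scale re-expressed in units of $1/|A_M|(\vec 0)$), and in Case $\cB$ one uses $M(\omega_2 r)$ together with its companion $M^*(\omega_2 r)$.

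For the $j$-fold refinement, I would let $G_\tau$ also depend on $j$ by requiring the initial $N$-valued graph produced by Description~\ref{description} to have $N$ large enough to supply $j$ disjoint families of indices, each of size $m(2m+2)(m + 2\widetilde N + 2)$. Each such family feeds independently into Proposition~\ref{disk} and Lemma~\ref{sdisk} to produce its own stable minimal disk, and Theorem~\ref{2vgminimal} combined with the proof of Theorem~\ref{extmvg2} then extends the corresponding triple of sheets to a 3-valued graph on the $\ve/\Omega_\tau$ scale; pairwise disjointness of the resulting extended graphs is inherited from the disjointness of the underlying triples of sheets inside $M$. The main obstacle is bookkeeping: one must verify that $\Omega_\tau$ and $\omega_\tau$ can be taken uniformly across the $j$ simultaneous applications, which follows by taking the worst of the $j$ choices, and that the compactness/contradiction argument producing $G_\tau$ is insensitive to the choice of family.
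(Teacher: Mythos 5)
Your proposal matches the paper's treatment: the paper gives no separate proof of Theorem~\ref{mainextension}, presenting it explicitly as a summary of Theorems~\ref{extmvg} and~\ref{extmvg2} and their proofs, and your assembly --- inheriting $\Omega_\tau,\omega_\tau$ from Theorem~\ref{extmvg2}, producing $G_\tau$ by the same compactness/contradiction scheme used in Sections~\ref{findmvg} and~\ref{sec3.3}, and handling the two-helicoid case and the $j$-fold refinement via Description~\ref{description} and extra sheets fed into Proposition~\ref{disk} --- is exactly the intended derivation. The one imprecision is that the ``consists of'' claims on the fixed scale $\ve/\Omega_\tau$ follow from item~2 of Theorem~\ref{extmvg} together with the two-or-four-sheet count established in the proof of Theorem~\ref{extmvg2}, not from item~4 of Description~\ref{description} alone, which only controls the region between the sheets on the small scale $\omega_1 r$ proportional to $1/|A_M|(\vec 0)$.
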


\subsection{The final step in the proof of
the Extrinsic Curvature Estimate for Planar Domains.} \label{number2}
 Note that Theorem~\ref{mainextension}
is also true for minimal disks, and follows
from  the extension results of minimal multi-valued graphs by Colding
and Minicozzi~\cite{cm21},
which  motivated our work. However,
the following curvature estimates depend on
the nonzero value of the constant mean curvature and are not true for
minimal surfaces. For the reader's convenience,
we  recall the statement of Theorem~\ref{thm3.1}.\\

\begin{theorem}[see Theorem~\ref{thm3.1}]
Given $\ve>0$, $m\in \N$  and $H \in(0, \frac{1}{2\ve})$, there exists
a constant $K(m,\ve, H)$ such that the following holds.  Let
$M\subset \ov{\B}(  \ve  )$ be a compact, connected $H$-surface of genus zero
with at most $m$ boundary components, $\vec{0}\in M$, $\partial M \subset \partial
\B( \ve )$ and $M$ has zero flux. Then:
$$|A_M |(\vec{0})\leq   K(m,\ve, H).$$
\end{theorem}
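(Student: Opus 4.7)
The plan is to derive a contradiction from the pair of oppositely oriented 3-valued graphs $G_n^{up}, G_n^{down} \subset M_n$ supplied by Theorem~\ref{extmvg2}, exploiting the zero-flux hypothesis. Arguing by contradiction, after rescaling so that $H = 1$ (so $\varepsilon < 1/2$), there is a sequence $M_n$ of $1$-planar domains satisfying the hypotheses with $|A_{M_n}|(\vec 0) \to \infty$. Theorem~\ref{extmvg2} and Theorem~\ref{mainextension} give, for $n$ large and any prescribed $\tau > 0$, a 3-valued graph $G_n^{up} = \mathrm{graph}(u)$ with upward unit normal $\xi$ over the annulus $A(\varepsilon/\Omega_2,\, 4\omega_2 r_n)$ with $|\nabla u| \leq \tau$, where $r_n = \sqrt{2}/|A_{M_n}|(\vec 0) \to 0$, together with a disk $D \subset M_n \cap \mathbb{B}(10\omega_\tau r_n/\sqrt{2})$ containing its inner boundary.

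I then construct a 1-cycle $\gamma_n \subset M_n$ by concatenating four arcs. Setting $R = \varepsilon/(2\Omega_2)$: (i) the spiral arc on $G_n^{up}$ parameterized by $\theta \in [0, 2\pi]$ above the circle of radius $R$, which runs from the middle sheet up to the top sheet at the same $(x_1,x_2)$-coordinate; (ii) a radial arc on the top sheet at $\theta = 2\pi$ running inward from radius $R$ to $\partial D$; (iii) a short arc inside $D$ joining to the corresponding inner-boundary point of the middle sheet; and (iv) a radial arc on the middle sheet at $\theta = 0$ running outward back to the start of the spiral. Since $M_n$ has zero flux, $\mathrm{Flux}(\gamma_n) = \int_{\gamma_n} (H\gamma + \xi) \times \dot\gamma \, ds$ must be the zero vector.

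The main computation estimates the $x_3$-component of $\mathrm{Flux}(\gamma_n)$ piece by piece. Along the spiral, because the horizontal projection is the circle of radius $R$ and $|\nabla u| \leq \tau$, a direct calculation yields $\int_{\text{spiral}} (H\gamma \times \dot\gamma)_3 \, ds = 2\pi R^2 H + O(\tau R)$, while $\int_{\text{spiral}} (\xi \times \dot\gamma)_3 \, ds = O(\tau R)$ because $\xi$ is within $O(\tau)$ of vertical so $\xi \times \dot\gamma$ is almost purely radial. Along the radial arcs (ii) and (iv), the term $H\gamma \times \dot\gamma$ has vanishing $x_3$-component (both $\gamma$ and $\dot\gamma$ lie in $\{x_2 = 0\}$ to leading order) while $\xi \times \dot\gamma$ contributes $O(\tau R)$. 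The arc (iii) has length at most $\mathrm{diam}(D) = O(r_n)$ and bounded integrand, yielding $O(r_n)$. Summing gives
\[
\bigl| \mathrm{Flux}(\gamma_n)_3 - 2\pi R^2 H \bigr| \;\leq\; C(\tau R + r_n)
\]
for a universal constant $C$. Fixing $\tau$ small enough once and for all that $C\tau R < \pi R^2 H$, and then taking $n$ large enough that $C r_n < \pi R^2 H$, forces $\mathrm{Flux}(\gamma_n)_3 > \pi R^2 H > 0$, contradicting the zero-flux hypothesis.

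The decisive CMC-specific feature is the macroscopic lower bound $2\pi R^2 H$ produced by the term $H\gamma \times \dot\gamma$ on the spiraling loop; this term vanishes identically when $H = 0$, consistent with the well-known failure of an analogous estimate for embedded minimal disks (rescalings of the helicoid). The main technical obstacle is controlling the flux contribution of the closure arc inside $D$, where no gradient control is available: this is handled by the crude bound $\mathrm{diam}(D) = O(r_n) \to 0$ together with uniform boundedness of the integrand. A subsidiary point is verifying that the 1-cycle can actually be realized inside $M_n$ with the prescribed sheet structure, which follows from the fact, guaranteed by Theorem~\ref{mainextension}, that $D$ contains the inner boundary of $\mathrm{graph}(u)$ across all its sheets.
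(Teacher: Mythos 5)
Your strategy is genuinely different from the paper's: you extract a contradiction from a \emph{single} 3-valued graph by directly computing the flux of one loop that spirals up a sheet at radius $R$ and closes through the axis region, whereas the paper fixes the gradient bound once and for all, produces $2k$ pairwise disjoint, oppositely oriented 3-valued graphs over one \emph{fixed} annulus, and lets $k\to\infty$ so that two oppositely oriented graphs collapse onto a common limit annulus whose nonzero mean curvature vector would have to point up and down simultaneously. Your dominant term is computed correctly: along the spiral over the circle of radius $R$ one has $(\gamma\times\dot\gamma)_3\,ds=R^2\,d\theta$ exactly, the radial arcs at $\theta=0$ and $\theta=2\pi$ contribute nothing to this term, and the closing arc is negligible, so $\int_{\gamma_n}(H\gamma\times\dot\gamma)_3=2\pi HR^2+O(r_n)$, while the $\xi\times\dot\gamma$ term is pointwise $O(\tau)$ on the graphical part. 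This is the right CMC-specific mechanism: it is the boundary form of the identity $\int_{\partial\Omega}\eta_3=2H\int_\Omega\xi_3\approx 2\pi HR^2$ for the sheet $\Omega$, which cannot be balanced along a boundary whose conormal is nearly horizontal.

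The genuine gap is in the step ``fixing $\tau$ small enough once and for all that $C\tau R<\pi R^2H$.'' The error is of order $\tau R$ (perimeter times gradient bound) while the main term is of order $HR^2$ (area times $H$), so you need $\tau< cHR$. But $R=\ve/(2\Omega_2(\tau))$ is not independent of $\tau$: the outer radius of the annulus supplied by Theorem~\ref{extmvg2} shrinks as $\tau$ decreases, through the constant $\Omega_2(\tau)$, whose dependence on $\tau$ is inherited from Colding--Minicozzi's Theorem II.0.21 and is quantified nowhere in the paper. The inequality you actually need is $\tau\,\Omega_2(\tau)<c\,\ve H$, and nothing guarantees any $\tau>0$ satisfies it (if $\Omega_2(\tau)\gtrsim 1/\tau$, no choice works once $\ve H$ is small). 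Using more turns of the spiral does not help, since the enclosed area and the spiral length both scale linearly in the number of sheets, leaving the ratio $\tau/(HR)$ unchanged. This self-referential smallness condition is exactly what the paper's argument is built to avoid: it needs only a fixed gradient bound but arbitrarily many disjoint graphs over one annulus. A secondary, fixable point: you bound the length of the closing arc by $\mathrm{diam}(D)=O(r_n)$, an extrinsic quantity, whereas the flux integral requires a bound on the arc's length; this is repaired by routing the closing arc along the inner boundary spiral of the 3-valued graph, which has length $O(r_n)$.
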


\begin{proof}
 Arguing by contradiction, suppose that the theorem fails. In this
case, for some $\ve \in (0,\frac{1}{2})$, there exists a sequence
$M_{n}$ of $H$-surfaces satisfying the
hypotheses of the theorem and $|A_{M_{n}}|(\vec{0})>n$. After replacing
$M_{n}$ with a subsequence and applying a small translation,
that we shall still call $M_{n}$,
Theorems~\ref{extmvg2} and  \ref{mainextension} show that after
 composing by a fixed rotation,
given any $k\in \N$, there exists an $n(k)\in \N$, such that
for $n>n(k)$, there exist  $2k$  pairwise disjoint
3-valued graphs
$G^{down}_{n,1},G^{up}_{n,1},  \ldots, G^{down}_{n,k},G^{up}_{n,k}$  in $M_n$
on a fixed horizontal scale, i.e.,
they are all 3-valued graphs over a fixed annulus $A$
in the $(x_1,x_2)$-plane,  $G^{down}_{n,j}\cap W[G^{up}_{n,j}]\neq \O$
for $j\in \{1,\ldots,k\}$, and the  gradients of the graphing functions are
bounded in norm by 1; here the superscripts ``up" and ``down" refer to
the pointing directions of the unit normals to the graphs.

Thus, as $n$ goes
to infinity,  at least two of these
disjoint 3-valued graphs with constant mean curvature $H$ over $A$ that have
mean curvatures vectors pointing toward each
other, are becoming arbitrarily close to
each other.  This situation  violates
the maximum principle. Alternatively to obtain
a contradiction, note that as the number $k$ of
these pairwise disjoint graphs goes to infinity,
there exists a sequence $\{G^{up}_{n,j(n)},G^{down}_{n,j(n)}\}$ of
 associated  pairs of
oppositely oriented 3-valued graphs that collapses smoothly
to an annulus of constant mean curvature $H$
that is a graph over $A$ and whose
nonzero mean curvature vector points upward
and downward at the same time, which is impossible.
This contradiction proves that the norm of
the second fundamental form of $M$ at the origin must have a uniform bound.
\end{proof}

\section{The Extrinsic Curvature and
Radius Estimates for $H$-disks.} \label{number3}

In this section we prove  extrinsic curvature
and radius estimates for $H$-disks. The extrinsic curvature estimate
will be a simple consequence of the Extrinsic Curvature Estimate for
Planar Domains given in Theorem~\ref{thm3.1},
once we prove that an $H$-disk with $H\leq 1$ and with boundary contained outside
 $\ov{\B}(R)$, where $R\leq \frac12$, cannot intersect  $\B(R)$ in a component with an
arbitrarily large number of boundary components; see the next proposition
for the existence of this bound.

\begin{proposition} \label{number} There exists $N_0\in \N$ such that
for any $R\leq{\frac{1}{2}}$ and $H\leq 1$, if $M$ is a compact disk
of constant mean curvature
$H$ (possibly $H=0$) with $\partial M\subset [\rth-\B(R)]$
and $M$ is transverse to $\partial \B(R)$, then each component of
$M\cap \B(R)$ has at most $N_0$ boundary components.

Furthermore, there exists  an $\overline{R}\in(0,\frac12)$ such that
whenever $R\leq \overline{R}$, then each
component of $M\cap \B(R)$ has at most $5$ boundary components.
\end{proposition}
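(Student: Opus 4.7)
The plan is to argue by contradiction, exploiting the zero-flux property of $H$-disks and splitting a failing sequence according to whether the second fundamental form stays uniformly bounded. Assume no such $N_0$ exists. Then there is a sequence of $H_n$-disks $M_n$ with $H_n\leq 1$, radii $R_n\leq\tfrac{1}{2}$, and connected components $\Sigma_n$ of $M_n\cap \B(R_n)$ whose number of boundary components $k_n$ tends to infinity. Rescaling by $R_n^{-1}$ normalizes $R_n=1$ with $\widetilde H_n:=H_nR_n\leq\tfrac{1}{2}$, and each $\widetilde\Sigma_n$ is a genus-zero planar domain in $\B(1)$ with $k_n$ disjoint boundary curves on $\partial\B(1)$. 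Since $\widetilde\Sigma_n$ sits inside the simply-connected disk $\widetilde M_n$, its boundary 1-cycle $\partial\widetilde\Sigma_n=\sum_{i=1}^{k_n}\gamma_{n,i}$ bounds in $\widetilde M_n$ and hence has zero flux: $\sum_iF(\gamma_{n,i})=\vec 0$ in $\rth$.

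The argument then splits into two cases depending on whether $\sup_n\sup_{\widetilde\Sigma_n}|A_{\widetilde\Sigma_n}|$ is finite. If it is, standard curvature-bounded compactness produces a smooth subsequential limit --- an embedded $H_\infty$-surface in $\ov\B(1)$ containing a planar domain with infinitely many boundary curves on $\partial\B(1)$ --- and properness plus generic transversality of the limit to the sphere rule this out. If curvature blows up, one selects points $p_n\in\widetilde\Sigma_n$ of almost-maximal curvature as in Section~\ref{sec3} and applies Description~\ref{description} together with Theorem~\ref{extmvg2}: for $n$ large, $\widetilde\Sigma_n$ contains a pair of oppositely oriented 3-valued graphs $G_n^{up},G_n^{down}$ on a fixed horizontal scale, with gradient as small as desired. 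Now each of the $k_n-1$ components of $\widetilde M_n-\widetilde\Sigma_n$ not containing $\partial\widetilde M_n$ is a compact $\widetilde H_n$-disk with a single boundary curve $\gamma_{n,i}\subset\partial\B(1)$; maximum-principle comparisons with spheres of radius $1/\widetilde H_n$ confine each such complementary disk to a thin neighborhood of $\partial\B(1)$, and a packing argument on $\partial\B(1)$ combined with the zero-flux identity $\sum_iF(\gamma_{n,i})=\vec 0$ delivers the contradiction with $k_n\to\infty$.

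For the sharper assertion with $\ov R$ small, choose $\ov R$ so small that $\widetilde H_n\leq R_n\leq\ov R$ is as small as one likes. The surfaces $\widetilde\Sigma_n$ then approach a minimal limit, where the convex-hull property applies: each complementary disk component of $\widetilde M_n-\widetilde\Sigma_n$ lies in the convex hull of its single boundary curve on $\partial\B(1)$. A direct geometric count of how many disjoint spherical loops on $\partial\B(1)$ can bound such (nearly) minimal disks --- together with the single outer component containing $\partial\widetilde M_n$ --- gives the specific upper bound of $5$.

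The hard part will be the curvature-blow-up case, where the helicoidal local picture from Theorem~\ref{extmvg2} must be merged with the zero-flux identity to produce a quantitative obstruction against large $k_n$. Relating the flux vectors $F(\gamma_{n,i})$ to the geometric positions of the complementary disks --- so that a packing or maximum-principle argument on $\partial\B(1)$ becomes effective --- is the technical core. A secondary difficulty is that the count $5$ in the second assertion is sharp, so a pure compactness argument does not suffice and an explicit geometric enumeration of boundary-curve configurations on the sphere is required.
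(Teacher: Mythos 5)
There is a genuine gap here, and in fact the central step of your plan is circular. In the curvature blow-up case you propose to invoke Description~\ref{description} and Theorem~\ref{extmvg2}; but that entire machinery (starting from Proposition~\ref{disk}, where $N$ is taken of the form $m(2m+2)(m+2\widetilde N+2)$) is developed under the standing hypothesis that the planar domain has \emph{at most $m$ boundary components} for a fixed $m$. Proposition~\ref{number} exists precisely to supply that bound so that Theorem~\ref{thm3.1} becomes applicable to $H$-disks (see Remark~\ref{rem:2.3}); you cannot use the multi-valued graph theorems to prove it. Beyond the circularity, the technical core of your argument --- how the zero-flux identity and a packing argument on $\partial\B(1)$ actually contradict $k_n\to\infty$ --- is explicitly left open, and two of the geometric claims you do make are wrong. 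First, the maximum principle does not confine the complementary disks to a thin neighborhood of $\partial\B(1)$: comparison with spheres forces each such disk to \emph{reach} a point at distance at least $1/H$ from the origin, i.e.\ to go far out, which is the opposite of confinement. Second, for small $H$ the complementary disks cannot lie in the convex hull of their boundary curves on $\partial\B(1)$, since they exit $\B(1)$; so the ``nearly minimal'' convex-hull count of $5$ does not get off the ground.

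For contrast, the paper's proof is self-contained and makes no use of the blow-up analysis or of flux. Fixing a component $\Sigma$ with boundary curves $\Delta=\{\beta,\beta_1,\dots,\beta_n\}$ and a spherical cap $E\subset\partial\B(R)$ with $\partial E=\beta$, one builds by surgery an embedded piecewise-smooth sphere $S(E)$ bounding a mean-convex ball $B(E)$. Each curve $\a$ of $\Delta$ not contained in $E$ gives rise, via the mean curvature comparison principle (using $H\le 1$, $R\le\frac12$), to a far point $p_\a$ with $|p_\a|\ge 1/H\ge 1$ and to pairwise disjoint regions $B_\a$. Alexandrov reflection in the plane $\Pi(\a)_{(R+|p_\a|)/2}$ then produces pairwise disjoint stable graphs $\widehat U_\a$, each of which, by the curvature estimates of~\cite{rst1}, contains a radial graph over a geodesic disk of uniform radius $\ve R$ in $\partial\B(R)$; a packing count of disjoint geodesic disks on the sphere bounds $\#\G$, and applying this to both caps bounded by $\beta$ bounds $n$. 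The count of $5$ for $R$ small comes from rescaling so $H\to 0$, whereupon the reflected graphs converge to planes tangent to $\partial\B(\frac12)$ and at most two can be disjoint on each side. If you want to salvage your approach, you would need an argument for the blow-up case that does not presuppose a bound on the number of boundary components; as written, the proposal does not prove the proposition.
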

\begin{proof}
Since the surface $M$ is transverse to the sphere $\partial \B(R)$,
there exists a $\de\in (0,\frac{R}4)$ such that $M$ intersects the closed $\delta$-neighborhood
of $\partial \B(R)$ in components that are smooth compact annuli, where each such component
has one boundary curve
in $\partial \B(R+\delta)$ and one boundary curve
in $\partial \B(R-\delta)$,  and such that each of the spheres
$\partial \B(R+t)$ intersects $M$ transversely for $t\in [-\de,\de]$. For the remainder of
this proof we fix this value of $\de$, which depends on $M$.

If $M$ is minimal, then the convex hull property implies $N_0$ can be taken to be $1$.
Assume now that $M$ has constant mean curvature $H$,  $H\in(0,1]$.

Let $\Sigma$ be a component of $M\cap \overline{\B}(R)$ with boundary curves
$\Delta=\{\beta, \beta_1, \beta_2, \ldots, \beta_n\}$.  Here $\beta$ denotes
the boundary curve of $\Sigma$ which is the boundary of the annular component
of $M-\Sigma$, or equivalently, $\beta$ is one of the two boundary curves of
the component of $M-\Sigma$ that has $\partial M$
in its boundary; see Figure~\ref{figure1pdf2}.

\begin{figure}
\begin{center}
\includegraphics[width=6.1cm]{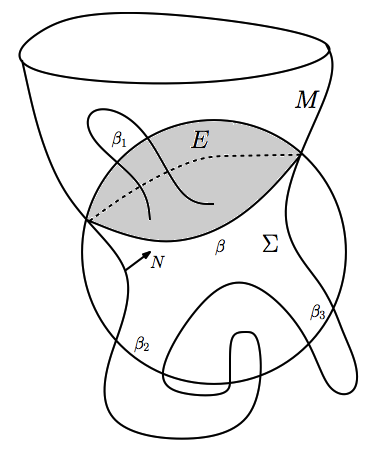}
\caption{A possible picture of $M$, $\S$ and the disks $D_{\be}$ and $E$.
 Here the component $\S$ of $M\cap \B(R)$ has  4 boundary components $\be, \be_1, \be_2, \be_3$
and $\Int(E)\cap M$ contains the simple closed curve $\be_1$.}
\label{figure1pdf2}
\end{center}
\end{figure}

Let $E$ be one of the two closed disks in $\partial \B(R)$ with boundary $\beta$.
Let $D_\beta$
be the open disk in $M$ with boundary $\beta$ and note that
$\Sigma\subset D_\beta$.  Next consider the piecewise-smooth
immersed sphere $D_\beta\cup E$ in $\rth$ and suppose that $D_\beta\cap E$ is a collection of $k$
simple closed curves. Then, after applying $k$
surgeries to this sphere in the open $\delta$-neighborhood of $E$, we obtain a
collection of $(k+1)$ pairwise disjoint piecewise-smooth embedded
spheres; for the after surgery picture when $k=1$, see Figure~\ref{fig2pdf}.
We can assume that each of these pairwise  disjoint piecewise-smooth embedded
spheres  contains a smooth compact connected subdomain  in $D_{\be}$ so that its complement in the
piecewise-smooth sphere
consists of a finite number of disks contained in  spheres of the form $\partial \B(R+t)$ for $t\in (-\de,\de)$.

\begin{figure}
\begin{center}
\includegraphics[width=5.7cm]{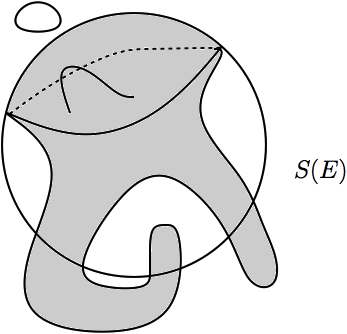}
\caption{The picture of the
embedded piecewise-smooth sphere $S(E)$ which is
the boundary of the
piecewise-smooth ball
$B(E)$; $S(E)$ is a sphere formed by first applying repeated surgeries to $D_\be$ along
the disk $E\subset \partial B(R)$
and then attaching $E$ to  the remaining disk with boundary $\be$.}
\label{fig2pdf}
\end{center}
\end{figure}

 Let $S(E)$ denote the sphere
in this collection which contains $E$, let $\Omega(E)$ be the smooth compact connected
subdomain $[D_\be\cap S(E)]-\Int(E)$ and let $B(E)$ denote the closed ball in
$\rth$ with boundary $S(E)$; see Figures~\ref{figure1pdf2} and \ref{fig2pdf}.

\begin{assertion} \label{ass:N} Let $\Gamma$ be the
subcollection of curves in $\Delta$ which
are not contained in   $E$.
The number of elements in $\G$ is bounded independently of $R\leq\frac12$ and $H\in(0,1]$ and
the choice of the component $\Sigma$. Furthermore, there exists an
$\overline{R}>0$ such that if $R\leq \overline{R}$, then $\G$ has at most two elements.
\end{assertion}
\begin{proof} Assume  that
$\G\neq \O$ and we shall prove the existence of the desired bounds on the number of elements in $\G$.

The condition   $\G\neq \O$  implies there are points of $\Omega(E)$ of distance greater than
$R+\de$ from the origin, and so there is a point $p\in \Int(\Omega(E))$ that is  furthest
from the origin. Since $S(E)-\Omega(E)$ is contained in $\B(R+\de)$,  $p$ is also a point of $B(E)$
that is furthest from the origin.   Since $B(E)\subset \ov{\B}(|p|)$,  the mean curvature comparison principle
implies that the mean curvature vector of $\Omega(E)$ points into $B(E)$ at $p$
and that it also points towards the origin. Since $\Omega(E)$ is a connected surface of
positive mean curvature, it follows that all of the
 mean curvature vectors of $\Omega(E)\subset \partial B(E)$
point into $B(E)$.

Each $\a\in \G$ bounds an open  disk $D_{\a}\subset D_\beta$
which initially enters   $\rth-\ov{\B}(R)$ near $\a$ and  these disks form a
pairwise disjoint collection. Each of these disks $D_{\a}$ intersects the $\delta$-neighborhood
of $\partial \B(R)$ in a subset that contains a component that is compact annulus with one boundary curve
 $\a$ and a second boundary component $\g(\a,\delta)$ in $\partial \B(R+\delta)$.
Since $H\leq 1$ and $R\leq\frac12$,  the mean curvature comparison principle shows that the component
$\widehat{D}_{\a}$ of $S(E)\cap[\rth-\B(R+\delta)]$ containing $\g(\a, \delta)$,  must
contain a point $p_{\a}$ with $|p_{\a}|\geq \frac{1}{H}\geq 1$ of maximal distance from the
origin.
Also note that the mean curvature vector of $\widehat{D}_{\a}$ points
towards the origin at $p_{\a}$. Hence,
since $B(E)$ is mean convex along $\Omega(E)$,
points in $B(E)$ sufficiently close to $p_\a$ are contained in $\ov{\B}(p_a,|p_\a|)$.
Once and for all, we make  for each $\a\in \G$, a particular choice
for   $p_{\a}$ if there is more than one possible choice.


Since $\rth-\ov{\B}(R+\delta)$ is
simply-connected, elementary separation properties imply that for each $\a\in \G$,
$\rth-[{\widehat{D}}_{\a}\cup \ov{\B}(R+\delta)]$ contains two components, and
let $B_\a$ be the closure of the bounded component; see Figure~\ref{fig6pdf}.

\begin{figure}
\begin{center}
\includegraphics[width=4.2cm]{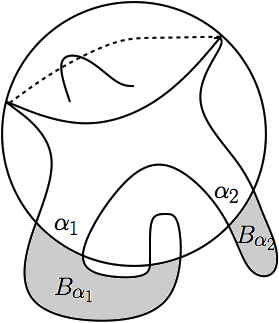}
\caption{Picture depicting the pairwise disjoint balls $B_\a$, $\a \in \G$, in the case
where $\G$ contains  two components $\a_1=\be_2, \a_2=\be_3$, when $\de$ is assumed to be 0.}
\label{fig6pdf} 
\end{center}
\end{figure}

 By construction
$\partial B_\a$ is an embedded piecewise-smooth compact surface,
the domain $\widehat{D}_{\a}\subset \partial B_\a$ is connected and
$\partial B_\a-\widehat{D}_{\a}$ consists of a finite number of
precompact domains in $\partial \B(R+\de)$.  Since the point $p_{\a}$
is a point of $B_\a$ of maximal distance from the origin, the
inward pointing normal of $\partial B_\a$ at  $p_{\a}$ points toward
the origin. Therefore  $\widehat{D}_{\a}$ has positive mean curvature
as part of the boundary of $B_\a$ oriented by its inward pointing normal.

We claim    that  $\{B_\a\}_{\a\in \G}$ is a
collection of indexed compact pairwise disjoint domains.
To prove this claim it suffices to show
that $S(E)\cap B_\a=\widehat{D}_{\a}$. Clearly, since for each $\a\in \G$,
$\widehat{D}_{\a}\subset S(E)$ and $\widehat{D}_{\a}\subset \partial B_\a$, then
$\widehat{D}_{\a}\subset S(E)\cap B_\a$. If $S(E)\cap B_\a\not \subset\widehat{D}_{\a}$,
then there exists  a point $p\in [S(E)-\widehat{D}_{\a}]\cap B_\a$ that is
furthest from the origin. By our earlier small positive choice of $\de>0$, the sphere
$\partial \B(R+\de)$ intersects $M$ transversely and so the point  $p$
lies in the interior of $B_\a$. Let $r_p$ be the ray $\{tp\mid t\geq 1 \}$,
let $t_{0}>1$ be the smallest $t>1$ such that $t_p\in\widehat{D}_{\a}$
and let $\gamma$ be the open segment $\{tp \mid 1<t<t_{0}\}$.
Since $p\in [S(E)-\widehat{D}_{\a}]\cap B_\a$  is a point in this set  that is furthest from
the origin,  the closed segment $\overline{\g}$ intersects $S(E)$ only
at its end points, namely $p$ and $t_{0}p$, which means that
the segment $\gamma$  is either in $B(E)$ or it is contained in the
complement of $B(E)$.

 On the one hand, since $ [D_{\beta}\cap S(E)]\subset \partial B(E)$
has positive mean curvature as part of the boundary of $B(E)$ oriented by its
inward pointing normal and   the mean curvature vector of  $D_\beta$ at $p$
points towards the origin, the ray $\g$ enters the complement of $B(E)$ near
$p$, which implies that $\gamma \subset [\rth - B(E)]$.  On the other hand,
$\wh{D}_\a$ is mean convex as part of the boundary of  $B(E)$ and also
as part of the boundary of
$B_\a$. Since $p$ lies in the interior of $B_\a$, then
$\g \subset B_\a$, which implies that  $\g$ is contained in the interior of
$B(E)$ near $t_{0}p \in \wh{D}_\a$.
This is a contradiction which proves  that  $S(E)\cap B_\a=\widehat{D}_{\a}$.
Hence,   $\{B_\a\}_{\a\in \G}$ is a
collection of compact pairwise disjoint domains.

Although we did not subscript the collection $\{B_\a\}_{\a \in\G}$ with
the variable $\delta$, the domains in it do depend on $\delta$. Letting
$\delta \to 0$, we obtain a related collection of limit compact domains
$\{B_\a\}_{\a\in \G}$, which we denote in the same way and which are pairwise
disjoint. Let $r_{\a}$ be the ray $\{s\frac{p_{\a}}{|p_{\a}|}\mid s>0\}$ and for $t\in (0,|p_{\a}|]$, let
$\Pi(\alpha)_t$ be the plane perpendicular to $r_\a$ at the point $t\frac{p_{\a}}{|p_{\a}|}$.
A standard application of the Alexandrov reflection principle to the region $B_\a$
and using the family of
planes $\Pi(\alpha)_t$, gives that the connected component $U_\a$ of
$\widehat{D}_{\a}- \Pi(\alpha)_{\frac{R+|p_\a|}{2}}$
containing $p_\a$ is graphical over its projection to $\Pi(\alpha)_{\frac{R+|p_\a|}{2}}$
and its  image $\widehat{U}_\a$, under reflection in
$\Pi(\alpha)_{\frac{R+|p_\a|}{2}}$, is contained in $B_\a$.
Thus, if $\alpha_1,\alpha_2 \in \G$ and $\a_1\neq \a_2$,
then $\widehat{U}_{\a_1}\cap \widehat{U}_{\a_2}=\O$ because $B_{\a_1}\cap B_{\a_2}=\O$.

Since $R\leq \frac12$ and $|p_\a|\geq \frac1H \geq 1$, the point $p_\a$
has height at least $\frac{1-R}{2}\geq \frac{1}{4}$ over the plane
$\Pi(\alpha)_{\frac{R+|p_\a|}{2}}$.  Let $\wh{p}_{\a}\in \widehat{U}_\a$
be the point in $\partial \B(R)\cap \partial B_\a$ that is the reflection of $p_{\a}$ in
the plane $\Pi(\alpha)_{\frac{R+|p_\a|}{2}}$. By the uniform curvature
estimates in~\cite{rst1} for oriented  graphs with constant mean
curvature (graphs are stable with curvature estimates away from their boundaries), it follows
that each of the graphs $\widehat{U}_\a$ contains  a disk $\wh{F}(\a) $
that is a radial graph over a closed geodesic disk $D(\wh{p}_{\a},\ve R)$ in
$\partial \B(R)$ centered at $\wh{p}_{\a}$ and of fixed geodesic radius
$\ve R>0$, where ${\ve}$ is independent of $M$, $R$, $\alpha$ and $H\in (0,1]$.
Since the surfaces $\widehat{U}_\a$ form a pairwise disjoint collection
of surfaces,  the distances on $\partial \B(R)$
between the centers of  different disks of the form
$D(\wh{p}_{\a},\ve R)$, $\a\in \G$, must be greater than ${\ve R}{}$. Therefore,
 $\{D(\wh{p}_{\a},\frac{\ve R}{2}) \mid \a \in \G\}$ is a pairwise
 disjoint collection of disks in $\partial \B(R)$.  Since a sphere
 of radius $R$ contains a uniformly bounded number of pairwise
 disjoint geodesic disks of radius  $\frac{\ve R}{2}$, independent of $R$, the last
 observation implies the first statement in Assertion~\ref{ass:N}.

We now prove the second statement in the assertion.  Arguing by contradiction,
suppose  $D(n)$ is a sequence of disks satisfying the conditions of the proposition,
the radii  of these disks satisfy $R_n\to 0$ and  $\Sigma_{n} \subset D(n)$ is a
sequence of components with at least three boundary components
$\{\a_1,\a_2,\a_3 \}$ in $\G(n)$, where $\G(n)$ are components
of $\Delta(n)=\partial \Sigma_{n}$ that are not contained in the
closed disk $E(n)$. Now replace the disks $D(n)$  by the scaled
disks $\frac{1}{2R_n}D(n)$ with mean curvatures $H_n$ converging
to 0 as $n\to \infty$. For $k=1,2,3$, a subsequence of the related
sequence of stable constant mean curvature graphs $\widehat{U}_{\a_k}^n$ defined
earlier converges to a flat plane $\Pi_k$ tangent to $\partial \B(\frac{1}{2})$.
Since the graphs $\widehat{U}_{\a_1}^n$, $\widehat{U}_{\a_2}^n$,
$\widehat{U}_{\a_3}^n$ are pairwise disjoint, if  $R_n$ is sufficiently
small, the number of these graphs must be at most two, otherwise
after choosing a subsequence, two
of the related limit planes  $\Pi_1$, $\Pi_2$, $\Pi_3$ must coincide
and in this case one would find that the two related graphs  $\widehat{U}_{\a_1}^n$, $\widehat{U}_{\a_2}^n$,
$\widehat{U}_{\a_3}^n$ intersect for $n$ sufficiently
large. This contradiction completes the proof of Assertion~\ref{ass:N}.
\end{proof}

Proposition~\ref{number} follows immediately from the estimates in Assertion~\ref{ass:N}.
\end{proof}

In the next lemma we prove curvature estimates for
$H$-disks that depends on the nonzero value $H$ of the mean
curvature.

\begin{lemma} \label{lem:excest} Given $\delta>0$ and $H\in (0,\frac 1{2\delta})$,
there exists a constant $K_0(\delta, H)$ such that for any $H$-disk
${\mathcal D}$,
{\large$${\large \sup}_{\large \{p\in {\mathcal D} \; \mid \;
d_{\rth}(p,\partial {\mathcal D})\geq  \delta \}}
|A_{\mathcal D}|\leq K_0(\delta,H).$$}
\end{lemma}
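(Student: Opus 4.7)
The plan is to deduce Lemma~\ref{lem:excest} from Theorem~\ref{thm3.1} (the extrinsic curvature estimate for planar domains) combined with Proposition~\ref{number} (the bound $N_0$ on the number of boundary components). The key observation is that restricting an $H$-disk $\mathcal{D}$ to a small Euclidean ball around an interior point $p$ produces a compact planar $H$-surface with zero flux, to which Theorem~\ref{thm3.1} applies, provided we can bound its number of boundary components independently of $\mathcal{D}$.

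First I would rescale to $H=1$: setting $\widetilde{\mathcal{D}} := H(\mathcal{D}-p)$ gives a $1$-disk with $\vec{0}\in \widetilde{\mathcal{D}}$ and $d_{\rth}(\vec{0},\partial \widetilde{\mathcal{D}})\geq H\delta$, where the hypothesis $H\in(0,\frac{1}{2\delta})$ translates into $H\delta<\frac12$. Next, by Sard's theorem, choose $R\in(0,H\delta)$ such that $\widetilde{\mathcal{D}}$ is transverse to $\partial \B(R)$, and let $\Sigma$ be the connected component of $\widetilde{\mathcal{D}}\cap \ov{\B}(R)$ containing the origin. Then $\Sigma$ is a compact, connected $1$-surface with $\partial \Sigma\subset \partial \B(R)$, $\vec{0}\in \Sigma$, genus zero (as an open subset of a disk), and zero flux (since $\Sigma\subset \widetilde{\mathcal{D}}$ and any $1$-cycle in a disk is null-homologous, hence has vanishing flux as the flux is a homological invariant by Definition~\ref{def:flux}).

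Since $R<\frac12$ and the mean curvature is $1\leq 1$, Proposition~\ref{number} applies and guarantees that $\Sigma$ has at most $N_0$ boundary components. Now Theorem~\ref{thm3.1}, applied with $\ve=R$, $m=N_0$, and $H=1$, yields
\[
|A_{\Sigma}|(\vec{0})\leq K(N_0,R,1).
\]
Undoing the scaling, we obtain
\[
|A_{\mathcal{D}}|(p)=H\cdot|A_{\widetilde{\mathcal{D}}}|(\vec{0})\leq H\cdot K(N_0,R,1).
\]
Fixing $R=\frac{H\delta}{2}$ yields a bound $K_0(\delta,H):=H\cdot K\bigl(N_0,\tfrac{H\delta}{2},1\bigr)$ that depends only on $\delta$ and $H$, as required.

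There is no serious obstacle beyond verifying the two inputs: the zero-flux property and the boundary-component bound. The zero-flux property is immediate from the homological nature of flux together with the fact that $\mathcal{D}$, and hence $\Sigma$, has no nontrivial $1$-cycles that are not bounded in the ambient disk; the boundary-component bound $N_0$ is exactly the content of Proposition~\ref{number} and is the crucial ingredient that makes Theorem~\ref{thm3.1} applicable with a uniform choice of $m$. The transversality of $\partial \B(R)$ with $\widetilde{\mathcal{D}}$ is handled by a routine application of Sard's theorem, which allows the arbitrarily small perturbation of $R$ within $(0,H\delta)$.
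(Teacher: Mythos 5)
Your proof is correct and follows essentially the same route as the paper's: restrict to the component of the disk inside a small ball around $p$, invoke Proposition~\ref{number} to bound its number of boundary components by $N_0$, note the zero-flux property, and apply Theorem~\ref{thm3.1}. Your explicit rescaling to $H=1$ is a harmless (indeed slightly more careful) normalization that makes the hypotheses $R\leq\frac12$, $H\leq1$ of Proposition~\ref{number} manifestly satisfied, whereas the paper applies it directly at scale $\delta$ with this rescaling left implicit.
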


\begin{proof}
  After translating ${\mathcal D}$, we may assume
  that $p=\vec{0}$ and that $\partial \B(\delta)$ intersects
  $\mathcal D$ transversally.  By
  Proposition~\ref{number}, there is a universal  $N_0\in \N$
 such that the component $M$ of
${\mathcal D}\cap \B(\delta)$ containing $\vec{0}$
has at most $N_0$ boundary components.

Since $M\subset {\mathcal D}$, the planar domain $M$ has zero flux.
After setting $\ve=\delta$ and applying Theorem~\ref{thm3.1}
 to $M$,
we find that there is a constant $K_0(\delta, H)$
such that  $|A_M| (\vec{0})\leq K_0(\delta,H)$, which proves the lemma.
\end{proof}

\subsection{The Extrinsic Radius Estimate}

The next theorem states that there exists
an upper bound for the extrinsic distance from a
point in an $H$-disk to its boundary.

\begin{theorem}[Extrinsic Radius Estimates] \label{ext:rest}
There exists a constant ${\mathcal R}_{0}\geq \pi$ such that any $H$-disk
$\cD$ has extrinsic radius less than {\Large $\frac{{\mathcal R}_0}{H}$.}
In other words, for any point $p\in \cD$,
$$d_{\rth}(p,\partial \cD)<{\cR_0}/{H}.$$
\end{theorem}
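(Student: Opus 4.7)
The plan is to argue by contradiction, with Theorem~\ref{ext:cest} providing the main ingredient. If no such $\cR_0$ exists, then for each $n\in \N$ there is an $H_n$-disk $\cD_n$ and a point $p_n \in \cD_n$ with $H_n\cdot d_{\rth}(p_n, \partial \cD_n) > n$. After rescaling by the factor $H_n$ and translating so that $p_n = \vec 0$, we obtain a sequence of $1$-disks $\cD_n$ through the origin with $R_n := d_{\rth}(\vec 0, \partial \cD_n) \to \infty$. For any fixed $R > 0$ and any $n$ with $R_n > R + 1$, every $q \in \cD_n \cap \ov{\B}(R)$ satisfies $d_{\rth}(q, \partial \cD_n) \geq 1$, so Theorem~\ref{ext:cest} applied with $\delta = 1$ and $\cH = 1$ gives the uniform bound $|A_{\cD_n}|(q) \leq K_0(1,1)$. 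Combined with the embeddedness of the $\cD_n$ and the positive-Jacobi-function obstruction from Section~\ref{sec3} that caps the multiplicity at two, this curvature bound lets one extract a subsequence converging smoothly, with multiplicity one or two on compact subsets of $\rth$, to a non-empty, properly embedded $1$-surface $\Sigma$ through $\vec 0$ with bounded second fundamental form.

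The key step is to show that $\Sigma$ is compact. Each $\cD_n$ is a disk and so has zero flux, and the curve-lifting property of the multiplicity-one-or-two smooth convergence (item~5 of the description in Section~\ref{sec3}) transfers zero flux to $\Sigma$; moreover, $\Sigma$ has genus zero as a smooth limit of planar domains of genus zero. By the asymptotic analysis of ends of properly embedded CMC surfaces of bounded second fundamental form due to Korevaar-Kusner-Solomon~\cite{kks1}, each end of $\Sigma$ is annular and carries a non-zero asymptotic flux. Since the flux of $\Sigma$ vanishes, $\Sigma$ has no ends, i.e.\ it is compact; Alexandrov's theorem then identifies $\Sigma$ with a round sphere of radius $1$.

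Once $\Sigma$ is known to be a compact sphere, the desired contradiction reduces to a purely topological observation. Fix an open tubular neighborhood $U$ of $\Sigma$ of radius strictly less than $(\sup |A_\Sigma|)^{-1}$. Smooth multiplicity-one-or-two convergence of $\cD_n$ to $\Sigma$ on $\ov U$ implies that, for $n$ large, the part of $\cD_n$ lying in $U$ and converging to $\Sigma$ consists of one or two pairwise disjoint normal graphs over $\Sigma$, each a closed embedded subsurface of $\cD_n$ diffeomorphic to $\Sigma \cong S^2$. Any such closed subsurface is both open and closed in $\cD_n$, and so by connectedness equals $\cD_n$; but $\cD_n$ has non-empty boundary located at distance $R_n \to \infty$ from $\vec 0$, hence is strictly larger than any such bounded subsurface. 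This contradiction establishes the theorem. The hardest step is the compactness of $\Sigma$: it hinges on the interplay between zero flux (which encodes the simple connectivity of the $\cD_n$), bounded second fundamental form, genus zero, and the asymptotic structure of embedded CMC ends, whereas the remaining steps are routine applications of the curvature estimate and of standard compactness and topological arguments.
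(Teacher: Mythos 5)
Your overall architecture (contradiction, rescale to $H=1$, uniform curvature bound, extract a smooth limit, contradict zero flux) matches the paper's, but there are two genuine problems. First, a circularity: you invoke Theorem~\ref{ext:cest} to get the curvature bound, but in the paper Theorem~\ref{ext:cest} is itself deduced \emph{from} Theorem~\ref{ext:rest}. The correct tool at this stage is Lemma~\ref{lem:excest}, whose constant depends on the exact value $H=1$ (and which requires $\delta<\tfrac12$, so take e.g.\ $\delta=\tfrac14$); since the rescaled disks all have $H=1$ and their boundaries are at distance at least $1$ from any fixed ball, this lemma gives the same uniform bound without circularity. This is easily repaired.

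The second problem is the real gap: your compactness argument for the limit $\Sigma$. You appeal to Korevaar--Kusner--Solomon~\cite{kks1} to assert that ``each end of $\Sigma$ is annular and carries a non-zero asymptotic flux,'' but that asymptotic analysis presupposes that the surface has finite topology (finitely many annular ends). The limit of the components $\Delta_n$ of $\cD_n\cap\B(n)$ is only known to be a genus-zero, strongly Alexandrov embedded (not necessarily embedded) $1$-surface with bounded second fundamental form; a priori it may have infinitely many ends, or ends that are not annular, and then nothing forces any individual end to carry nonzero flux. The paper sidesteps exactly this difficulty by invoking the Minimal Element Theorem of~\cite{mt4}: one translates $\Delta_\infty$ by a divergent sequence of points and extracts a sub-limit containing an embedded Delaunay surface, whose nonzero flux is then pulled back to closed curves in the $\cD_n$ for $n$ large, contradicting the vanishing of the flux of a disk. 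Without either that theorem or an argument establishing finite topology of $\Sigma$, your step ``zero flux $\Rightarrow$ no ends $\Rightarrow$ compact'' does not go through. The final topological contradiction (a closed sphere cannot be a proper subsurface of a connected disk with nonempty boundary) is fine once compactness is granted, but the compactness itself is the step that needs the machinery you have not supplied.
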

\begin{proof}
By scaling arguments, it suffices to prove the theorem for $H=1$.
Arguing by contradiction, suppose that the radius estimate fails.
In this case, there exists a sequence
of $1$-disks $\cD_n$ passing through  the origin such that for each $n$,
$d_{\rth}(\vec{0},\partial \cD_n)\geq n+1$. Without loss of generality,
we may assume that $\partial \B(n)$ intersects
$\cD_n$ transversally. Let $\Delta_n$ be the smooth
component of $\cD_n \cap \B(n)$ with $\vec{0}\in \Delta_n$. By
Lemma~\ref{lem:excest}, the surfaces $\Delta_n$
have uniformly bounded norm of the second fundamental form.
A standard compactness argument, see for instance  Section~\ref{sec3} in this manuscript or the paper~\cite{mt4},
gives that a subsequence of $\Delta_n$ converges with multiplicity one to a
genus zero, strongly Alexandrov embedded\footnote{$\Delta_\infty$
is the boundary of a properly immersed complete
three-manifold $f\colon N^3\to \rth$ such that $f|_{\Int(N^3)}$ is
injective and $f(N^3)$ lies on the mean convex side of $\Delta_{\infty}$.} 1-surface
$\Delta_\infty$ with bounded norm of the second fundamental form.

By the Minimal Element Theorem in~\cite{mt4}, for some divergent sequence of
points $q_n\in \Delta_{\infty}$, the translated surfaces $\Delta_{\infty}-q_n$
converge with multiplicity one  to a strongly Alexandrov embedded surface
$\wt{\Delta}_\infty$ in $\rth$ such that the component passing
through $\vec{0}$ is an embedded Delaunay surface. Since a Delaunay surface
has nonzero flux, we conclude that the original disks $\cD_n$ also have
nonzero flux for $n$ large, which is a contradiction. This contradiction
proves that the extrinsic radius of a 1-disk $\cD$ is bounded by a universal constant, and
Theorem~\ref{ext:rest} now follows.
\end{proof}

\subsection{The Extrinsic Curvature Estimate}
In this section we prove the extrinsic curvature estimate stated in the Introduction.

\begin{proof}[Proof of Theorem~\ref{ext:cest}]
Arguing by contradiction, suppose that the theorem fails for some $\de, \cH>0$.
In this case there exists a sequence of $H_n$-disks
with $H_n\geq\cH$ and points $p_n\in \cD_n$ satisfying:
\begin{equation}\label{eq:1} \de\leq d_{\rth}(p_n, \partial \cD_n), \end{equation}
\begin{equation} \label{eq:2} n\leq |A_{\cD_n}|(p_n) .
\end{equation}

Rescale these disks by $H_n$ to obtain the sequence of 1-disks
$\wh{\cD}_n=H_n\cD_n$ and a related sequence of points $\wh{p}_n=H_n p_n$.
By definition of these disks and points, and  equations~\eqref{eq:1} and
\eqref{eq:2}, and Theorem~\ref{ext:rest}, we have that
\begin{equation}\label{eq:3}
\de \mathcal H\leq \de H_n \leq d_{\rth}(\wh{p}_n, \partial \wh{\cD}_n)\leq\mathcal R_0,
\end{equation}  \begin{equation}\label{eq:4}
\frac{n}{H_n}\leq |A_{\wh{\cD}_n}|(\wh{p}_n) .
  \end{equation}

Equation~\eqref{eq:3} and Lemma~\ref{lem:excest} imply that
\[
|A_{\wh{\cD}_n}|(\wh{p}_n)\leq K_0(\delta\mathcal H,1).
\]
 This inequality, together with equations~\eqref{eq:3} and~\eqref{eq:4}, then gives
\[
\frac{\de}{\cR_0}n \leq\frac{n}{H_n}
\leq|A_{\wh{\cD}_n}|(\wh{p}_n)\leq K_0(\de\mathcal H, 1),
\]
which gives a contradiction for $n$ chosen sufficiently large.  This
contradiction proves  the desired curvature estimate.
\end{proof}

\vspace{.3cm}
\center{William H. Meeks, III at profmeeks@gmail.com\\
Mathematics Department, University of Massachusetts, Amherst, MA 01003}
\center{Giuseppe Tinaglia at giuseppe.tinaglia@kcl.ac.uk\\ Department of
Mathematics, King's College London,
London, WC2R 2LS, U.K.}

\bibliographystyle{plain}
\bibliography{bill}

\end{document}